\font\cyr=wncyr10
\newcommand{\BA}{{\mathbb {A}}}
\newcommand{\BC}{{\mathbb {C}}}
\newcommand{\BF}{{\mathbb {F}}}
\newcommand{\BQ}{{\mathbb {Q}}}
\newcommand{\BR}{{\mathbb {R}}}
\newcommand{\BT}{{\mathbb {T}}}
\newcommand{\BZ}{{\mathbb {Z}}}
\newcommand{\BZp}{{\BZ_p}}
\newcommand{\BQp}{{\BQ_p}}
\newcommand{\CA}{{\mathcal {A}}}
\newcommand{\CG}{{\mathcal {G}}}
\newcommand{\CL}{{\mathcal {L}}}
\newcommand{\CO}{{\mathcal {O}}}
\newcommand{\CS}{{\mathcal {S}}}
\newcommand{\CT}{{\mathcal {T}}}
\newcommand{\CV}{{\mathcal {V}}}
\newcommand{\CX}{{\mathcal {X}}}
\newcommand{\fkL}{{\mathfrak{L}}}
\newcommand{\sA}{{\mathscr{A}}}
\newcommand{\sL}{{\mathscr{L}}}
\newcommand{\sM}{{\mathscr{M}}}
\newcommand{\sO}{{\mathscr{O}}}
\newcommand{\sV}{{\mathscr{V}}}
\newcommand{\fkh}{{\mathfrak{h}}}
\newcommand{\fkm}{{\mathfrak{m}}}
\newcommand{\fkp}{{\mathfrak{p}}}
\newcommand{\fkP}{{\mathfrak{P}}}
\newcommand{\heart}{{\heartsuit}}
\newcommand{\club}{{\clubsuit}}
\newcommand{\spade}{{\spadesuit}}
\newcommand{\Aut}{{\mathrm{Aut}}}
\newcommand{\diag}{{\mathrm{diag}}}
\newcommand{\End}{{\mathrm{End}}}
\newcommand{\Frob}{{\mathrm{Frob}}}
\newcommand{\Gal}{{\mathrm{Gal}}}
\newcommand{\GL}{{\mathrm{GL}}}
\newcommand{\Hom}{{\mathrm{Hom}}}
\newcommand{\ord}{{\mathrm{ord}}}
\newcommand{\Pic}{\mathrm{Pic}}
\newcommand{\Red}{{\mathrm{Red}}}
\newcommand{\Reg}{{\mathrm{Reg}}}
\newcommand{\loc}{{\mathrm{loc}}}
\newcommand{\Sel}{{\mathrm{Sel}}}
\newcommand{\Sha}{{\hbox{\cyr X}}}
\newcommand{\SL}{{\mathrm{SL}}}
\newcommand{\Sp}{{\mathrm{Sp}}}
\newcommand{\tr}{{\mathrm{tr}}}
\newcommand{\ur}{{\mathrm{ur}}}
\newcommand{\cyc}{{\mathrm{cyc}}}
\newcommand{\im}{{\mathrm{im}}}
\newcommand{\lth}{{\mathrm{lg}}}
\newcommand{\Ram}{{\mathrm{Ram}}}
\newcommand{\Trace}{{\mathrm{Trace}}}
\newcommand{\Ta}{{\mathrm{Ta}}}
\newcommand{\WD}{{\mathrm{WD}}}
\newcommand{\eps}{{\epsilon}}
\newcommand{\veps}{{\varepsilon}}
\newcommand{\wh}{\widehat}
\newcommand{\pair}[1]{\langle {#1} \rangle}
\newcommand{\ov}{\overline}
\newcommand{\incl}{\hookrightarrow}
\newcommand{\bs}{\backslash}
\newcommand{\isoarrow}{\stackrel{\sim}{\rightarrow}}
\newtheorem{thm}{Theorem}[section]
\newtheorem{cor}[thm]{Corollary}
\newtheorem{lem}[thm]{Lemma}
\newtheorem{prop}[thm]{Proposition}
\newtheorem{theorem}{Theorem}[section]
\newtheorem{conjecture}[theorem]{Conjecture}
\theoremstyle{definition}
\theoremstyle{remark}
\newtheorem{remark}[thm]{Remark}
\numberwithin{equation}{section}
\title{Indivisibility of Heegner points in the multiplicative case}
\author[1]{Christopher Skinner}
\affil[1]{Department of Mathematics\\ Princeton University\\ Fine Hall, Washington Road\\ Princeton, NJ 08544\\ USA}
\author[2]{Wei Zhang}
\affil[2]{Department of Mathematics\\ Columbia University\\ MC 4423\\
2990 Broadway\newline New York, NY 10027\\ USA}
\date{}
\begin{document}


\maketitle

\begin{abstract}
For certain elliptic curves $E$ over $\BQ$ with multiplicative reduction at a prime $p\geq 5$, we prove the $p$-indivisibility of the derived Heegner classes defined with respect
to an imaginary quadratic field $K$, as conjectured
by Kolyvagin. The conditions on $E$ include that $E[p]$ be irreducible and not finite at $p$ and that $p$ split in the imaginary quadratic field $K$, along with 
certain $p$-indivisibility conditions on various Tamagawa factors. 
The proof extends the arguments of the second author for the case where $E$ has good ordinary reduction at~$p$.  
\end{abstract}

\tableofcontents

\section{Introduction}
Let $E/\BQ$ be an elliptic curve of conductor $N$, let $K$ be an imaginary quadratic field of discriminant $-D$ such that $(D,N)=1$, 
and let $p\geq 5$ be a prime. In \cite{Z13} the second-named author showed that if $E$ has good ordinary reduction 
at $p$ and $p\nmid D$, then under suitable hypotheses on the Galois representation $E[p]$ and the bad reduction of $E$ (including the indivisibility by $p$ of appropriate Tamagawa factors) the Kolyvagin system of cohomology classes in $H^1(K, E[p])$ arising from Heegner points on Shimura curves  is non-zero. Consequences of this include the $p$-part of the 
Birch--Swinnerton-Dyer (BSD) formula in the rank one case as well as the fact that $\ord_{s=1}L(E,s)=1$ is equivalent to the $p^\infty$-Selmer group $\Sel_{p^\infty}(E/\BQ)$ having $\BZp$-corank one.  The purpose of this paper is to extend these results to many cases where $E$ has multiplicative reduction at $p$. That is, to 
cases where
$$
p\mid\mid N.
$$
The cases where we succeed in doing so are those where $p$ splits in $K$, the mod $p$ Galois representation $E[p]$ is not finite at $p$, and where - when $E$ has split multiplicative
reduction at $p$ - the $p$-adic Mazur--Tate--Teitelbaum $\fkL$-invariant of $E$ has valuation equal to $1$. The latter condition is used to 
ensure that the corresponding $\fkL$-invariants of newforms congruent to $E$ are also non-zero.

As an application of our main result for elliptic curves with multiplicative reduction at $p$ we prove:

\begin{thm}\label{thm E/Q}
Let $E/\BQ$ be an elliptic curve with conductor $N$ and minimal discriminant $\Delta$ and let $p\geq 5$ be a prime. Suppose
\begin{itemize}
\item[\rm (a)] $E$ has multiplicative reduction at $p$ (equivalently, $p\mid\mid N$);
\item[\rm (b)] $p\nmid \ord_p(\Delta)$, and if $E$ has split multiplicative reduction at $p$ then $\log_p q_E \in p\BZ_p^\times$,
where $q_E\in \BQ_p^\times$ is the Tate period of $E/\BQ_p$;
\item[{\rm (c)}] $E[p]$ is an irreducible $\Gal(\overline\BQ/\BQ)$-module;
\item[{\rm (d)}] for all primes $\ell\mid\mid N$ such that $\ell\equiv \pm 1\pmod p$, $p\nmid\ord_\ell(\Delta)$;
\item[{\rm (e)}] there exist at least two prime factors $\ell\mid\mid N$ such that $p\nmid\ord_\ell(\Delta)$;
\item[{\rm (f)}] the $p^\infty$-Selmer group $\Sel_{p^\infty}(E/\BQ)$ has $\BZ_p$-corank one.
\end{itemize}
Then the rank and analytic rank of $E/\BQ$ are both equal to~$1$ and the Tate-Shafarevich group 
$\Sha(E/\BQ)$ is finite.
\end{thm}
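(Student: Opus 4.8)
The plan is to derive Theorem~\ref{thm E/Q} from the main technical result of the paper (the non-vanishing of the Kolyvagin/Heegner system in $H^1(K,E[p])$ for $E$ with multiplicative reduction at $p$), following the template of \cite{Z13} in the good-ordinary case. The first step is to choose a suitable auxiliary imaginary quadratic field $K$. By the criterion of Bump--Friedberg--Hoffstein (or Murty--Murty, or Waldspurger), combined with hypothesis~(f), one can select $K$ of discriminant $-D$ with $(D,N)=1$ satisfying: (i) every prime $\ell \mid N$ splits or is ramified appropriately so that the sign of the functional equation of $L(E/K,s)$ is $-1$ and $\ord_{s=1}L(E/K,s)=1$; (ii) $p$ splits in $K$ and $p\nmid D$; and (iii) the twisted curve $E^K$ has $L(E^K,1)\neq 0$. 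Hypothesis~(e), giving at least two primes $\ell \mid\mid N$ with $p\nmid \ord_\ell(\Delta)$, is what makes it possible to arrange the Heegner hypothesis (or its relaxation to the ``generalized Heegner hypothesis'' via Shimura curves attached to a quaternion algebra ramified at an even set of primes) while still keeping the sign of $L(E/K,s)$ equal to $-1$; one of these primes can be moved into the ramification set of the quaternion algebra. Hypotheses (a), (b), (c) ensure that $E$ falls within the scope of the paper's main theorem with respect to this $K$: namely $E[p]$ is irreducible (c), $E[p]$ is not finite at $p$ and $p\mid\mid N$ with $p\nmid\ord_p(\Delta)$ so the Tamagawa/$\fkL$-invariant conditions at $p$ hold (a),(b), and hypothesis~(d) handles the Tamagawa factors at the other bad primes $\ell\equiv\pm1\pmod p$.

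The second step invokes the main theorem: under these conditions the Heegner point $y_K\in E(K)$ has non-zero image in $E(K)/pE(K)$, or more precisely the associated class in $H^1(K,E[p])$ (built from the derived Heegner classes) is non-zero. Kolyvagin's theory of derived classes then applies, now over the field $K$ for the curve $E$ with this mod-$p$ representation: the non-vanishing of this one class, together with the full Kolyvagin system, forces $\Sel_{p^\infty}(E/K)$ to have corank exactly one and $\Sha(E/K)[p^\infty]$ to be finite. (This is the now-standard Kolyvagin argument, available once the input class is known to be non-trivial; see the treatment in \cite{Z13} which the present paper extends.) In particular $y_K$ has infinite order, so by the Gross--Zagier formula $\ord_{s=1}L(E/K,s)=1$, and $E(K)$ has rank one.

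The third step descends from $K$ to $\BQ$. Since $L(E/K,s)=L(E/\BQ,s)L(E^K/\BQ,s)$ and $K$ was chosen so that $L(E^K,1)\neq 0$, we get $\ord_{s=1}L(E/\BQ,s)=1$; by Kolyvagin's original theorem (or by the rank-one case of Gross--Zagier--Kolyvagin, valid unconditionally once the analytic rank is one) this yields $\rank E(\BQ)=1$ and $\Sha(E/\BQ)$ finite. Alternatively one argues directly: the $\Gal(K/\BQ)$-action splits $\Sel_{p^\infty}(E/K)\otimes\BQ_p$ into the $+$ and $-$ eigenspaces, identified with $\Sel_{p^\infty}(E/\BQ)\otimes\BQ_p$ and $\Sel_{p^\infty}(E^K/\BQ)\otimes\BQ_p$; hypothesis~(f) pins the first to be one-dimensional, and the non-vanishing $L(E^K,1)\neq 0$ together with the main theorem applied to the twist (or a direct Selmer bound) forces the second to vanish, so the Heegner point lives in the $\BQ$-rational part and the finiteness of $\Sha(E/\BQ)[p^\infty]$ follows; finiteness of the prime-to-$p$ part of $\Sha(E/\BQ)$ is then the classical Gross--Zagier--Kolyvagin statement in analytic rank one.

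The main obstacle is the first step: arranging the auxiliary field $K$ so that \emph{all} the hypotheses of the paper's main theorem hold simultaneously --- the correct sign of the functional equation over $K$, the splitting of $p$ in $K$, the generalized Heegner hypothesis realized via a Shimura curve for a quaternion algebra ramified at a suitable even set of primes among the $\ell\mid N$, the non-vanishing of the twisted $L$-value, and (crucially) that the Tamagawa-factor and $\fkL$-invariant indivisibility conditions built into the main theorem are guaranteed by hypotheses (b)--(e) rather than being extra assumptions on $K$. Tracking that (d) and (e) are exactly the combinatorial input needed --- (e) to have enough primes to split between the quaternion ramification set and the ``split in $K$'' set, and (d) to control the Tamagawa numbers at the primes $\ell\equiv\pm1\pmod p$ that are forced to remain inert or split --- is the delicate bookkeeping, and is where the argument of \cite{Z13} must be re-examined to confirm it goes through verbatim in the multiplicative setting.
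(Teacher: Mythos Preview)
Your overall strategy --- choose an auxiliary $K$, invoke the main non-vanishing result, feed it into Kolyvagin's machinery, and descend to $\BQ$ --- is the paper's approach: Theorem~\ref{thm E/Q} is deduced from Theorem~\ref{thm E rank} (the formula relating $\ord(\kappa^\infty)$ to the Selmer coranks $r_p^\pm$, itself obtained by combining Theorem~\ref{thm E main} with Kolyvagin's structure theorem \cite{Koly-Selmer}) via a good choice of $K$, exactly as \cite[Thm.~1.4]{Z13} is deduced from \cite[Thm.~1.2]{Z13}.

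There is, however, a genuine gap in your logical flow. In step~1(i) you demand $\ord_{s=1}L(E/K,s)=1$; since you also impose $L(E^K,1)\neq 0$, this amounts to $\ord_{s=1}L(E,s)=1$, which is the conclusion you seek and cannot be arranged in advance by any choice of $K$. Dropping that condition, your step-2 claim that the non-vanishing of the Kolyvagin system already forces $\Sel_{p^\infty}(E/K)$ to have corank exactly one, and hence that $y_K$ is non-torsion, is unjustified: Theorem~\ref{thm E main} only guarantees $c(n,1)\neq 0$ for \emph{some} $n\in\Lambda$, not $c(1,1)\neq 0$. The missing bridge is precisely Theorem~\ref{thm E rank}. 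Once one has $r_p^+=1$ from hypothesis~(f) and $r_p^-=0$ from $L(E^K,1)\neq 0$ (via Kato or classical Kolyvagin applied to the twist), the structure-theorem formula for $\ord(\kappa^\infty)$ in terms of $r_p^\pm$ forces the Heegner point itself to be non-torsion; only then do Gross--Zagier and your descent in step~3 go through. Your ``alternative'' paragraph does assemble the Selmer-rank ingredients $r_p^+=1$ and $r_p^-=0$, but you never make explicit that it is these, together with the non-vanishing of $\kappa^\infty$ and Kolyvagin's structure theorem, that force the non-torsion of $y_K$ --- and that is the heart of the deduction.
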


The first condition in (b) of Theorem \ref{thm E/Q} is equivalent to $E[p]$ not being finite as a representation of $\Gal(\overline\BQ_p/\BQ_p)$,
and - assuming the first condition - the second condition in (b) is then equivalent to the 
Mazur--Tate--Teitelbaum $\fkL$-invariant of $E$ belonging to $p\BZ_p^\times$.
The indivisibility condition in (d) and (e) is equivalent to $E[p]$ being ramified at the prime $\ell$.

We also deduce a result in the direction of the Birch--Swinnerton-Dyer formula for elliptic curves
of analytic rank one:

\begin{thm}\label{thm E BSD} Let $E/\BQ$ be an elliptic curve and let $p\geq 5$ be a prime. Suppose hypotheses $\rm{(a)-(e)}$ of
Theorem \ref{thm E/Q} hold and that $\ord_{s=1}L(E,s) = 1$. Then 
$$
\ord_p(\frac{L'(E,1)}{\Omega_E\cdot\Reg(E/\BQ)}) = \ord_p(\#\Sha(E/\BQ) \cdot \prod_{\ell\mid N} c_\ell).
$$
\end{thm}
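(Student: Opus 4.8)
The plan is to deduce the formula from the main $p$-indivisibility result of this paper by the now-standard route: pass to a well-chosen imaginary quadratic field $K$, use the Gross--Zagier formula together with Kolyvagin's method in its refined form to establish the $p$-part of the Birch--Swinnerton-Dyer formula for $E/K$, and then descend to $\BQ$ by means of the $p$-part of the rank-zero Birch--Swinnerton-Dyer formula for a quadratic twist.

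First I would choose an auxiliary imaginary quadratic field $K$, of discriminant $-D$ with $(D,N)=1$ and $p\nmid D$, subject to: $p$ splits in $K$; the splitting behaviour in $K$ of the primes dividing $N$ is compatible with the (generalized) Heegner hypothesis needed to form the relevant Shimura-curve parametrization and the associated Kolyvagin system --- here conditions (d) and (e), which say that $E[p]$ is ramified at (at least) two primes $\ell\mid\mid N$, supply exactly the flexibility to arrange this while keeping the hypotheses of the main theorem of this paper in force; and $L(E^K,1)\neq 0$. This last condition can be met by the non-vanishing theorems of Waldspurger and Bump--Friedberg--Hoffstein / Murty--Murty, in the effective form that permits one to prescribe the local behaviour of $K$ simultaneously. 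Since $L(E/K,s)=L(E/\BQ,s)\,L(E^K/\BQ,s)$ and $\ord_{s=1}L(E/\BQ,s)=1$, this yields $\ord_{s=1}L(E/K,s)=1$ and $L'(E/K,1)=L'(E/\BQ,1)\cdot L(E^K,1)$.

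The heart of the argument is over $K$. The Gross--Zagier formula, in the Shimura-curve generality required for the chosen $K$, expresses $L'(E/K,1)$ --- up to an explicit factor involving $\sqrt D$ and the period $\Omega_E$ --- in terms of the N\'eron--Tate height $\wh{h}(P_K)$ of the Heegner point $P_K\in E(K)$ underlying the Kolyvagin system. Feeding the $p$-indivisibility of the derived Heegner classes (the main result of this paper) into Kolyvagin's machinery in the refined form of \cite{Z13}, which converts that indivisibility into the lower bound on $\Sel_{p^\infty}(E/K)$ matching Kolyvagin's upper bound, one obtains that $E(K)$ has rank one, that $\Sha(E/K)$ is finite, and that the $p$-part of the Birch--Swinnerton-Dyer formula holds over $K$:
$$
\ord_p\!\left(\frac{L'(E/K,1)}{\Omega_{E/K}\cdot\Reg(E/K)}\right)
=\ord_p\!\left(\#\Sha(E/K)\cdot\prod_{v}c_v(E/K)\right),
$$
where $\Omega_{E/K}$ is the natural period, which differs from $\Omega_E^{+}\Omega_E^{-}$ only by a $p$-adic unit since $p\nmid D$.

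It remains to descend to $\BQ$. Because $p$ splits in $K$, the curve $E^K$ is isomorphic to $E$ over $\BQ_p$; in particular it still has multiplicative reduction at $p$ with the same $\fkL$-invariant, and the $p$-part of the rank-zero Birch--Swinnerton-Dyer formula for $E^K/\BQ$ is then available from cyclotomic Iwasawa theory (Kato's divisibility and the relevant Iwasawa main conjecture), with the exceptional zero at $p$ in the split case accounted for via the Greenberg--Stevens formula together with the matching trivial zero on the algebraic side. Comparing the two formulas term by term --- the periods multiply up to $p$-adic units (using $p\nmid D$), $\Reg(E/K)=\Reg(E/\BQ)$ because $E^K(\BQ)$ is finite, the local factors $\prod_v c_v(E/K)$ agree with $\prod_{\ell\mid N}c_\ell(E/\BQ)\cdot\prod_{\ell\mid N}c_\ell(E^K/\BQ)$ up to $p$-adic units, and $\#\Sha(E/K)$ agrees with $\#\Sha(E/\BQ)\cdot\#\Sha(E^K/\BQ)$ up to a power of $2$, a $p$-adic unit since $p\geq5$ --- the asserted rank-one formula for $E/\BQ$ drops out. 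I expect the main obstacle to lie in the step over $K$: matching precisely the local ``Tamagawa-type'' factors produced by the refined Kolyvagin argument with the arithmetic $c_v$'s, and handling correctly the prime $p$, which is simultaneously a prime of bad (multiplicative) reduction, the prime where the Kolyvagin classes live, and a prime whose local Selmer condition departs from the good-ordinary situation of \cite{Z13}. Tracking the period normalizations, the Manin constant, and the $\fkL$-invariant contribution through this comparison --- and likewise handling the exceptional zero cleanly in the descent step --- is where the real work lies.
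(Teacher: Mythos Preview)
Your proposal is correct and follows essentially the same route as the paper: choose an auxiliary $K$ with $L(E^K,1)\neq 0$ and such that the hypotheses of Theorem~\ref{thm E main} hold, combine Gross--Zagier with the refined Kolyvagin argument (using the stronger input $\sM_\infty(g)=0$ from Theorem~\ref{Kconj thm}) to obtain the $p$-part of BSD over $K$, and descend via the rank-zero BSD formula for $E^K$ (Theorem~\ref{thm BSD 0}, which is \cite[Thm.~B]{Smult}) together with the period comparison of Lemma~\ref{can period twist}. The paper's own proof is a one-line reference to the corresponding deduction in \cite[Thms.~10.2,~10.3]{Z13}, and your outline is a faithful unpacking of exactly that argument, including the correct identification of the delicate points (the $\fkL$-invariant in the exceptional-zero case, the local conditions at $p$, and the period normalizations).
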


\noindent Here $\Reg(E/\BQ) = \frac{\langle y,y\rangle_{NT}}{[E(\BQ):\BZ y]^2}$ with $y\in E(\BQ)$ any non-torsion point 
and $\langle y,y\rangle_{NT}$ the canonical N\'eron--Tate height of $y$, $\Omega_E$ is the canonical (N\'eron) period
of $E$, and the $c_\ell$ are the local Tamagawa numbers of $E$ at the primes $\ell$.

Theorems \ref{thm E/Q} and \ref{thm E BSD} are both proved by studying the divisibility by $p$ of Heegner points on $E$ coming from suitable Shimura curves.
Let $K$ be as above and suppose that $p$ splits in $K$. 
Let $\ov\rho_{E,p}:\Gal(\ov\BQ/\BQ)\rightarrow\mathrm{Aut}_{\BF_p}E[p]$ denote the Galois representation on the
$p$-torsion $E[p]$ of $E$. 
Let $\Ram(\ov{\rho}_{E,p})$ be the set of primes $\ell\mid\mid N$, $\ell\neq p$, such that $\ov\rho_{E,p}$ is ramified at $\ell$ (equivalently, $p\nmid\ord_\ell(\Delta))$.
Write $N=N^+N^-$ where the prime factors of $N^+$ (resp.~$N^-$) are all split (resp.~inert) in $K$. In particular, $p\mid N^+$. 
 Consider the following hypothesis for $(E,p,K)$:
 \medskip
 
\noindent {\em Hypothesis} $\spade$
\begin{itemize}
\item[(1)]  $N^-$ is squarefree  ($N^-=1$ is allowed). 
\item[(2)]
 $\Ram(\ov{\rho}_{E,p})$ contains all primes $\ell\neq p$ such that $\ell\mid\mid N^+$ and all primes $\ell\mid N^-$ such that 
 $\ell\equiv \pm 1\pmod p$. 
\item[(3)] $\Ram(\ov{\rho}_{E,p})\neq \emptyset$, and either $\Ram(\ov{\rho}_{E,p})$ contains a prime $\ell\mid N^-$ or there are at least two primes factors $\ell\mid\mid N^+$. 
\end{itemize}

\noindent We prove:

\begin{thm} 
\label{thm E main}
Let $E/\BQ$ be an elliptic curve of conductor $N$ and minimal discriminant $\Delta$, and let $p$ be a prime such that 
$p\mid\mid N$. Let $K=\BQ[\sqrt{-D}]$ be an imaginary quadratic field such that $(D,N)=1$. If
\begin{itemize}
\item[\rm (a)] $p\geq 5$ and $p$ splits in $K$;
\item[\rm (b)] $\ov\rho_{E,p}$ is an irreducible $\Gal(\ov\BQ/\BQ)$-representation;
\item[\rm (c)] $\ov{\rho}_{E,p}$ is not finite at $p$, and if $E$ has split 
multiplicative reduction at $p$ then $\log_p q_E \in p\BZ_p^\times$, where $q_E\in \BQ_p^\times$ is the Tate period of $E/\BQ_p$;
\item[\rm (d)]   Hypothesis $\spade$ holds for $(E,p,K)$ with $N^-$ a product of an even number of primes ($N^-~=~1$ is allowed),
\end{itemize}
then
$$\kappa=\{c(n,1)\in H^1(K,E[p]):n \in \Lambda\}\neq\{0\}.$$
In particular, $\kappa^\infty\neq \{0\}.$
\end{thm}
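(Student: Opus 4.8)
The plan is to follow, with the modifications required at the prime $p$, the strategy of the second author in \cite{Z13} for the good ordinary case. To $E$ one attaches the newform $f\in S_2(\Gamma_0(N))$; since $N^-$ is a product of an even number of primes and $p\mid N^+$ (because $p$ splits in $K$), the classes $c(n,1)$, $n\in\Lambda$, are the Kolyvagin-derived classes built from Heegner points on the Shimura curve of the \emph{indefinite} quaternion algebra of discriminant $N^-$, with $p$ inside the Iwahori-type level structure. Level-raising $f$ modulo $p$ at a Kolyvagin prime $q$ passes to the \emph{definite} quaternion algebra of discriminant $N^-q$ and attaches to the congruent form a theta element / Gross point; these horizontal theta elements, together with an anticyclotomic $p$-adic $L$-function obtained by going up the anticyclotomic $\BZp$-extension, form the definite side of a bipartite Euler system over $\Lambda$ whose indefinite side is $\{c(n,1)\}$. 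It suffices to produce a single $n\in\Lambda$ with $c(n,1)\neq 0$; the implication $\kappa\neq\{0\}\Rightarrow\kappa^\infty\neq\{0\}$ is then immediate.

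The engine is Bertolini--Darmon's two explicit reciprocity laws, in the form proved in \cite{Z13}, which express (up to $p$-adic units) the unramified and singular localizations at Kolyvagin primes of the classes $c(n,1)$ in terms of reductions modulo $p$ of the theta elements of the congruent forms. Arguing as in \cite{Z13}: assuming $c(n,1)=0$ for every $n\in\Lambda$, the reciprocity laws — combined with global duality (the vanishing of the sum of local invariants of a cup product) and a Chebotarev argument producing the requisite auxiliary Kolyvagin primes, using that $\ov\rho_{E,p}$ has sufficiently large image (a consequence of irreducibility and $p\geq 5$, treated as in \cite{Z13}) — propagate this vanishing to force the vanishing modulo $p$ of the associated anticyclotomic theta element. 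Thus the whole argument comes down to a single non-vanishing statement: that this anticyclotomic theta element, equivalently the anticyclotomic $p$-adic $L$-function of a newform congruent to $f$ modulo $p$ at the relevant level, is non-zero modulo $p$.

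This non-vanishing is the main obstacle, and the only place the hypotheses peculiar to $p\mid\mid N$ are used. For $p\nmid N$ it is supplied by the theorems of Vatsal and Chida--Hsieh on the non-vanishing modulo $p$ of anticyclotomic twists of central $L$-values, whose ramification input is exactly Hypothesis $\spade$ together with the irreducibility of $\ov\rho_{E,p}$. When $p$ divides the level and splits in $K$, the form is Steinberg at $p$, the relevant Gross point sits at Iwahori level, and the anticyclotomic $p$-adic $L$-function acquires an exceptional zero at $p$; its leading term is then governed by the Mazur--Tate--Teitelbaum $\fkL$-invariant $\fkL(g)$ of the congruent newform $g$. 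What one needs is $\fkL(g)\in p\BZp^\times$ — that the exceptional zero contributes exactly one power of $p$ — and I would obtain it from a congruence argument showing $\ord_p\fkL(g)=\ord_p\fkL(f)=1$ whenever $g$ is congruent to $f$ to sufficiently high $p$-power precision; this is precisely why hypothesis (c) requires $\log_p q_E\in p\BZp^\times$ rather than merely $\fkL(E)\neq 0$. (The hypothesis that $\ov\rho_{E,p}$ is not finite at $p$, equivalently $p\nmid\ord_p(\Delta)$, is what forces $f$, and hence every congruent $g$, to be Steinberg at $p$, so that the Tate-curve filtration $0\to\mu_p\to E[p]\to\BZ/p\BZ\to 0$ and the $\fkL$-invariant exist and the local condition at $p$ defining $\kappa$ is well posed.) Granting this non-vanishing, the argument of the previous paragraph runs through and gives $\kappa\neq\{0\}$.
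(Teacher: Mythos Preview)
Your proposal misidentifies the structure of the argument in \cite{Z13} (and hence here) at several crucial points.

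First, the level-raising is at \emph{admissible} primes $q\in\Lambda'$ (those with $p\nmid q^2-1$ and $(q+1)^2\equiv a(q)^2\pmod\fkp$), not at Kolyvagin primes; the two sets are disjoint. The induction variable in \cite{Z13} is the product $m\in\Lambda^{',+}$ of admissible primes, and the Kolyvagin classes $c(n,m)$ are indexed by $n\in\Lambda$ and $m\in\Lambda^{',+}$ separately.

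Second, and more seriously, there is no anticyclotomic $\BZp$-tower, no anticyclotomic $p$-adic $L$-function, and no Vatsal/Chida--Hsieh input in the argument. The \cite{Z13} proof is a direct induction on $\dim_k\Sel_\fkp(A_m/K)$ over $m\in\Lambda^{',+}$: one uses rank-lowering (choosing admissible primes via a Chebotarev argument) together with the cohomological congruence $\loc_{q_1}c(n,m)\neq 0 \Leftrightarrow \loc_{q_2}c(n,mq_1q_2)\neq 0$ to descend in twos to the base case of Selmer rank one. In that base case one lowers once more to a congruent $g'$ of Selmer rank zero, and the required non-vanishing input is the \emph{$\fkp$-part of the BSD formula in analytic rank zero} from \cite{SU} and \cite{Smult} (cyclotomic Iwasawa theory), which yields $\phi_{g'}(x_K)\not\equiv 0\pmod{\fkp'}$; the Jochnowitz congruence then gives $c(1,m)\neq 0$, and the cohomological congruences transport this back to $m=1$.

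Consequently the $\fkL$-invariant hypothesis has nothing to do with anticyclotomic exceptional zeros. It is there because \cite{Smult} requires $\fkL(g')\neq 0$ to prove the rank-zero BSD formula when $a_{g'}(p)=1$. You correctly intuited that the point of (c) is to propagate non-vanishing of $\fkL$ to all congruent forms: since Hypothesis~$\fkL$ depends only on the residual representation $\ov\rho$, Lemma~\ref{Linv lem} guarantees $\ord_{\fkp'}\fkL(g')=\ord_{\fkp'}(p)$ (hence $\fkL(g')\neq 0$) for every $g'$ congruent to $f$. But the mechanism is the cyclotomic main conjecture, not an anticyclotomic leading-term formula.
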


\noindent Here $\kappa$ is the mod $p$ Kolyvagin system arising from Heegner points 
over ray class fields of $K$ on a 
certain Shimura curve associated with the factorization $N=N^+N^-$, 
and $\kappa^\infty$ is the full $p$-adic Kolyvagin system.

The proof of Theorem \ref{thm E main}, which closely follows the proof of the main result of \cite{Z13}, makes use of Heegner points on
the modular abelian varieties associated to newforms of level $Nm $ for suitable square-free integers $m$ and which 
are congruent to the newform associated with $E$.  In fact, Theorem \ref{thm E main} is just a special case of a similar theorem for 
newforms with multiplicative reduction at $p$. We defer the statement of this result to Section \ref{main results}.

The proof of Theorem \ref{thm E main}, really of the more general Theorem \ref{Kconj thm}, follows along the lines of the proof 
of \cite[Thm.~9.1]{Z13}, which is the main result in \cite{Z13}.  Most of this paper is taken up with ensuring that the definitions, constructions,
and crucial ingredients used in \cite{Z13} carry over to the cases considered here. In particular, to successfully follow the strategy in \cite{Z13} we 
must supply a few additional ingredients:
\begin{itemize}
\item We prove a version of Ihara's lemma for Shimura curves when the residual representation is an irreducible 
$\Gal(\ov\BQ/\BQ)$-representation
and reducible but not finite as a representation of $\Gal(\ov\BQ_p/\BQ_p)$.
For the case of modular curves this is already in the literature; we give a proof here for Shimura curves (see \ref{Ihara}).
\item We prove a suitable level-raising result for the newforms considered herein (see \ref{level-raise}). 
This is crucially used to construct elements of the Kolyvagin system.
\item We check that appropriate multiplicity one results hold (see \ref{multone}). 
These are essentially due to Mazur and Ribet \cite{MR} in the case of modular curves and to Helm \cite{He} in the general case (by an argument that depends
on the level-raising result).
\item We verify that the Kolyvagin classes satisfy the required local property at primes above $p$ (see \ref{loc at p}). 
This turns out to be straightforward when the residual representation is not finite at $p$.
\item We verify that the crucial cohomological congruence still holds for the Kolyvagin classes (see \ref{coh cong}). This requires
the new versions of Ihara's lemma and the multiplicity one results.
\item We check that the $p$-part of the BSD formula holds for the level-raised newforms congruent to that 
associated with $E$ (see \ref{BSD0}).  
These forms have Selmer rank $0$ and hence analytic rank $0$, so this this is essentially
a consequence of \cite{SU} and \cite{Smult}. However, the $a(p)=1$ case requires checking that the $\fkL$-invariants of the newforms are non-zero.
\item We verify that the result of Ribet--Takahashi/Pollack--Weston relating congruence numbers and Tamagawa numbers holds 
when $p\mid\mid N$ (see Theorem \ref{thm RT}).
\item We explain that the base case of the induction - the Selmer rank one case - still holds (see \ref{rank one}). 
We also include details about the comparisons of periods and related special value formulas used to prove the base case both in this paper and in \cite{Z13}.
\end{itemize}
After confirming that we have these ingredients at our disposal, the proof of \cite[Thm.~9.1]{Z13} carries over directly,
yielding Theorem \ref{thm E main}. Theorems \ref{thm E/Q} and \ref{thm E BSD} are deduced from Theorem \ref{thm E main} 
just as the analogous
results for the case $p\nmid N$ are deduced in \cite{Z13} from \cite[Thm.~9.1]{Z13}.

Motivation for extending the results of \cite{Z13} to cases of multiplicative reduction comes from 
recent joint work of the authors' with Manjul Bhargava \cite{BSZ-BSDprop}, in which Theorem \ref{thm E/Q} is a key ingredient in a proof that 
at least $66.48\%$ of elliptic curves over $\BQ$, when ordered by naive height, satisfy the rank part of the Birch--Swinnerton-Dyer conjecture.
\medskip

\noindent{\it Acknowledgements.}   The first named author was supported in
part by National Science Foundation Grants~DMS-0758379 and DMS-1301842. 
Much of this paper was written while the first named author was a Moore/Tausky-Todd visiting scholar at Caltech in the Spring of 2014.
The second named author was supported in part by the National Science Foundation Grant~DMS-1301848 and 
a Sloan research fellowship.

\section{Notation, conventions, and some preliminary results}

In this section we fix notation that will be in force throughout this paper. As much as possible we have tried to be consistent with the 
notation in \cite{Z13}. We also include some preliminary results about some of the objects introduced.

\subsection{The prime $p$} Throughout, $p\geq 5$ is a fixed prime.

\subsection{Fields and Galois groups} Let $\ov\BQ$ be a fixed Galois closure of $\BQ$. For a number field $M\subset \ov\BQ$,
let $G_M = \Gal(\ov\BQ/M)$.  Given a number field $M$ and a place $w$ of $M$, let $G_{M_w}\subset G_M$ be a decomposition
group of $M$ (which can be identified with $\Gal(\ov M_w/M_w)$ for some $M$-embedding $\ov\BQ\hookrightarrow \ov M_w$).
For $w$ a finite place, let $I_w\subset G_{M_w}$ be the inertia subgroup and $\Frob_w\in G_{M_w}/I_w$ be the
arithmetic Frobenius. Let $\BF_w$ be the residue field of $w$ and let $\ov\BF_w$ be an algebraic closure of $\BF_w$.
Then there is a natural isomorphism $G_{M_w}/I_w\isoarrow G_{\BF_w} = \Gal(\ov\BF_w/\BF_w)$.

\subsection{Cyclotomic characters}
Let $\veps:G_\BQ\rightarrow\BZ_p^\times$ be the $p$-adic character and let $\chi:G_\BQ\rightarrow\BF_p^\times$ be the mod $p$ reduction
of $\veps$.

\subsection{The imaginary quadratic field $K$}
Let $K\subset \ov\BQ$ be an imaginary quadratic field of discriminant $-D<0$ 
in which $p$ splits, and let $\sO_K$ be the
ring of integers of $K$. For a positive integer $n$, let $K[n]/K$ be the ray class extension of conductor $n$.

Let $\chi_K:(\BZ/D\BZ)^\times\rightarrow\{\pm 1\}$ be the odd primitive quadratic character associated with $K$.

\subsection{Objects associated with newforms and Hypothesis $\club$}\label{newform-gal}
For a newform $g=\sum_{n=1}^\infty a(n) q^n$ of weight $2$, level $N$ (which we always assume satisfies $(N,D)=1$), 
and trivial nebentypus, 
let $F$ be the number field generated by the $a(n)$ and let $\sO$ be its ring of integers. The coefficients $a(n)$ 
generate a possibly non-maximal order $\sO_{0}\subset \sO$. Given a prime $\fkp$ of $\sO$ containing $p$,
let $\fkp_0=\fkp\cap\sO_{0}$.  Let $k=\sO/\fkp$ and $k_{0}=\sO_{0}/\fkp_0$.

Let $A$ be an abelian variety in the isogeny class of $\GL_2$-type abelian varieties associated with $g$. We take
$A$ so that $\sO\hookrightarrow\End_\BQ A$. In this case, the $p$-adic Tate module $\Ta_pA$ (resp.~$A[\fkp^n]$) is a free 
$\sO\otimes\BZp$-module (resp.~$\sO/\fkp^n$-module) of rank two. In particular, $\CV = \Ta_pA\otimes_{\sO}F_{\fkp}$
is a two-dimensional $F_\fkp$-space with continuous $G_\BQ$-action, and $\CT = \Ta_\fkp A=\Ta_pA\otimes_{\sO}\sO_{\fkp}$ is
a $G_\BQ$-stable $\sO_{\fkp}$-lattice. Similarly, $V=A[\fkp]\cong \CT/\fkp\CT$ is a two-dimensional $k$-space
with a continuous $k$-linear $G_\BQ$-action. 

There is a continuous $G_\BQ$-representation $\rho:G_\BQ\rightarrow\Aut_{F_\fkp}\CV$. 
The determinant of $\rho$ is the cyclotomic character $\veps$,  
$\rho$ is unramified at all primes $\ell\nmid Np$, and for such a prime $\ell$ we have $\Trace\rho(\Frob_\ell) = a(\ell)$.
Similarly, the determinant of the two-dimensional $k$-representation $V$ is 
the mod $p$ cyclotomic character $\chi$, $V$ is unramified at all $\ell\nmid Np$, and for such an $\ell$ 
the trace of a Frobenius element $\Frob_\ell$ is just $a(\ell) \pmod{\fkp}$.
The semisimplification $V^{ss}$ of $V$ is defined over $k_0$; this follows from the Brauer-Nesbitt Theorem and the Chebotarev Density Theorem.
We denote by $V_0$ the two-dimensional $k_0$ representation of $G_\BQ$ such that $V^{ss} \cong V_0\otimes_{k_0} k$ 
as $G_\BQ$-representations. Let $\ov\rho:G_\BQ\rightarrow \Aut_{k_0} V_0$ be the $G_\BQ$-action on $V_0$.

We will generally assume that 
$$
\text{$V$ is an irreducible $k$-representation}
$$ 
(in which case it is absolutely irreducible). In this case, $V^{ss}=V$ and so $V\cong V_0\otimes_{k_0}k$ and, in particular, 
$$
\ov\rho\otimes_{k_{0}}k \cong \rho \ (\mathrm{mod} \ {\fkp}).
$$ 
Our main results will generally assume that $\ov\rho$ satisfies the following hypothesis:
\medskip

\noindent{\em Hypothesis $\club$}
\begin{itemize}
\item[(1)] $\bar\rho$ is irreducible;
\item[(2)] the image of $\bar\rho:G_\BQ\rightarrow \Aut_{k_0} V_0 \cong \GL_2(k_0)$ contains a non-trivial unipotent element and at 
least two elements conjugate, respectively, to matrices of the form 
$\diag[a,1]$ and $\diag[b,-1]$, with $a,b\in \BF_p^\times\backslash\{\pm 1\}$.
\end{itemize}

\noindent Note that in order for part (2) of this hypothesis to hold, $p$ must be at least $5$.

\subsection{A local property of $\rho$ and $\ov\rho$}
We record an important local property of these Galois representations.

\begin{lem} \label{ord res lem}
Suppose $p\mid\mid N$. Then $a(p)\in\{\pm 1\}$ and the restriction of $\rho$ to the decomposition group $G_\BQp$ at $p$ satisfies 
$$
\rho|_{G_\BQp} \cong \left(\smallmatrix \epsilon\alpha^{-1} & * \\ 0 & \alpha\endsmallmatrix\right),
$$
where $\alpha$ is the unramified character of $G_\BQp$ such that $\alpha(\Frob_p) = a(p)$.
Similarly,
$$
\ov\rho|_{G_\BQp} \cong \left(\smallmatrix \chi \ov\alpha^{-1} & * \\ 0 & \ov\alpha\endsmallmatrix\right),
$$
where $\ov \alpha = \alpha\pmod{\fkp}$.
\end{lem}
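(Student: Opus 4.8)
\emph{Sketch of proof.} The plan is to first pin down $a(p)$ from the local theory of $g$ at $p$, and then to read off the shape of $\rho|_{G_{\BQp}}$ from the multiplicative reduction of $A$ at $p$. Since $p\mid\mid N$ and $g$ has trivial nebentypus, the local automorphic representation $\pi_p$ attached to $g$ has conductor exponent $1$; the only representations of $\GL_2(\BQp)$ with conductor exponent $1$ and trivial central character are the twists $\mathrm{St}\otimes\eta$ of the Steinberg representation by an \emph{unramified} character $\eta$ with $\eta^2=1$, so $\eta$ is trivial (split multiplicative case) or the unramified quadratic character (non-split case). In either case the $U_p$-eigenvalue of the newform $g$ equals $\eta(p)\in\{\pm1\}$, which is exactly $a(p)$; equivalently one invokes the Atkin--Lehner relation $a(p)=-\epsilon_p$ for weight-two newforms with $p\mid\mid N$, where $\epsilon_p\in\{\pm1\}$ is the Atkin--Lehner eigenvalue of $g$ at $p$. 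In particular $a(p)\in\{\pm1\}$ is a $p$-adic unit, so $g$ is $\fkp$-ordinary; since $p\geq 5$, the first assertion follows, and moreover $a(p)\bmod\fkp=\pm1$ is again a unit.

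Because $\pi_p$ is special, the abelian variety $A$ has semistable reduction at $p$ whose connected component is a torus: $A$ lies in the $p$-new part of $J_0(N)$, which is precisely the toric part of $J_0(N)$ at $p$ since $p\mid\mid N$. Hence $A_{/\BQp}$ admits a rigid-analytic (non-archimedean) uniformization $A(\ov\BQp)\cong T(\ov\BQp)/\Lambda$ by a $\BQp$-torus $T$ and a $G_{\BQp}$-stable lattice $\Lambda$, and the character $\eta$ above records whether $T$ is split over $\BQp$. Passing to $\fkp$-adic Tate modules and using the $\sO$-action --- which makes both the toric part $\Ta_\fkp T$ and $\Lambda\otimes\sO_\fkp$ free of rank one over $\sO_\fkp$ --- produces a $G_{\BQp}$-stable exact sequence
$$
0\lra \sO_\fkp(1)\otimes\alpha^{-1}\lra \CT \lra \sO_\fkp\otimes\alpha \lra 0,
$$
where $\alpha$ is the unramified character with $\alpha(\Frob_p)=\eta(p)=a(p)$; here $\alpha=\alpha^{-1}$ since $\eta^2=1$, and since $\det\rho=\veps$ the two sub-quotient characters must be $\veps\alpha^{-1}$ and $\alpha$ as displayed. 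This is exactly the claimed form of $\rho|_{G_{\BQp}}$. Reducing the sequence modulo $\fkp$ --- it stays exact and $G_{\BQp}$-stable --- yields the asserted form of $\ov\rho|_{G_{\BQp}}$ with $\ov\alpha=\alpha\pmod\fkp$, where $\ov\alpha(\Frob_p)=a(p)\in\{\pm1\}$ is still a unit.

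The one genuinely delicate input is the identification of $\rho|_{G_{\BQp}}$ used above, which is local-global compatibility at the prime $p$ itself, not at an auxiliary prime $\ell\neq p$ (where it is Carayol's theorem and produces the Weil--Deligne representation $\WD\cong \mathrm{sp}(2)\otimes\eta$ directly). This is where the semistability of $A$ at $p$ is essential: $\rho|_{G_{\BQp}}$ is semistable with Hodge--Tate weights $\{0,1\}$ and non-trivial monodromy, and the desired shape follows either from the uniformization recalled above (equivalently, from the structure of the N\'eron model of $A$ at $p$ and its toric part, after Grothendieck, SGA~7) or, using $\fkp$-ordinarity, from Wiles' description of the local Galois representation of an ordinary modular form, whose unramified quotient character sends $\Frob_p$ to the unit root --- here the whole Hecke eigenvalue $a(p)$ --- of the (degenerate) Hecke polynomial at $p$.
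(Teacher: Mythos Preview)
Your proof is correct and follows essentially the same route as the paper's: establish $a(p)\in\{\pm1\}$, observe that $A$ has purely toric reduction at $p$, invoke the non-archimedean uniformization to realize $\CT$ as an extension of $\sO_\fkp(\alpha)$ by $\sO_\fkp(\veps\alpha^{-1})$, and reduce modulo $\fkp$. The only cosmetic differences are that the paper obtains $a(p)^2=1$ by citing Li's newform theory rather than classifying conductor-one principal series/special representations, and that the paper identifies the Galois action on the character group $X$ with $\alpha$ via Ribet's description of $U_p$ on $X$ as $\Frob_p$, whereas you identify it via the splitting behavior of the torus encoded by $\eta$; both also note Wiles' ordinarity theorem as an alternative route. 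The paper additionally records the pairing $j:X\times Y\to\ov\BQ_p^\times$ explicitly because it is reused later in defining the $\fkL$-invariant, but this is not needed for the lemma itself.
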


\begin{proof} 
Since $p\mid\mid N$ and $g$ has weight two and trivial nebentypus, $a(p)^2 = 1$ by \cite[Thm.~3(iii)]{Li-75}. 
Hence $a(p)\in\{\pm 1\}$.  In particular, $g$ is ordinary with respect to $\fkp$ in the sense that $a(p)$ is a unit modulo $\fkp$.
The stated property of the restriction of $\rho$ to $G_\BQp$ is then a well-known result (cf.~\cite[Thm.~2.2]{Wiles-padic}). 
We recall a proof here that is based on 
Raynaud's generalization of the Tate curve; various ingredients of this proof will 
be used subsequently.

Since $p\mid\mid N$ and $g$ has trivial nebentypus, 
the abelian variety $A$ has completely toric reduction at $p$ (cf.~\cite[Chap.~2, Prop.~1]{MW}). Let $X$ be the character group of
the torus that is the identity component of the special fibre of the N\'eron model of $A$ over $\BZp$. This torus is split
over an at-most quadratic extension, and so $G_\BQp$ acts on $X$ through the Galois group of an unramified extension
of $\BQp$ of degree at most two. In fact, it acts through the character $\alpha$ since the induced action of
$U_p$ on $X$ is just $\Frob_p$ \cite[Prop.~3.8(ii)]{R90} (see also \cite[Thm.~1.7.6(4)]{Helm-thesis}); note that $U_p$ acts
as the (Atkin-Lehner) involution $-w_p$ on the newform $g$. 
Similarly, let $Y$ be the character group of the connected component of the special fibre of the N\'eron model of the dual abelian
variety $A^\vee$. Both $X$ and $Y$ are $\sO$-modules, locally free of rank one. There is a pairing
$$
j: X\times Y\rightarrow \ov\BQ_p^\times
$$
that is both $\sO$-invariant ($j(a\cdot x, y) = j(x,a\cdot y)$) and $G_\BQp$-invariant,
and an $\sO$-linear $G_\BQp$-invariant uniformization
$$
0\rightarrow X \stackrel{j}{\rightarrow} T(\ov\BQ_p)=\Hom(Y,\ov\BQ_p^\times)\rightarrow A(\ov\BQ_p)\rightarrow 0.
$$
This follows from the theory developed in \cite{McC} and \cite{Mo} (see also \cite{Ray}).
The Tate-module $\Ta_pA$ is then identified as a $G_\BQp$-extension
$$
0\rightarrow \Ta_p T\cong \Hom(Y,\BZp(1)) \rightarrow \Ta_pA \rightarrow X\otimes\BZp \rightarrow 0.
$$
These are all free $\sO\otimes\BZp$-modules, the left and right of rank one and the middle of rank two.
Tensoring with $\sO_{\fkp}$ we obtain the $\fkp$-Tate module $\CT$ of $A$ as a $G_\BQp$-extension
$$
0\rightarrow \sO_{\fkp}(\epsilon\alpha^{-1}) \rightarrow \Ta_\fkp A = \CT \rightarrow \sO_{\fkp}(\alpha) \rightarrow 0.
$$
The first claim of the lemma follows since $\CV = \Ta_\fkp A\otimes_\BZp\BQp$. 
The claim for $\ov\rho|_{G_\BQp}$ follows by reducing modulo $\fkp$. 
\end{proof}

\subsection{Necessary and sufficient condition for $\ov\rho$ to be finite at $p$}

Recall that a $G_{\BQ}$-representation of finite order is {\it finite at $p$} if: as a $G_{\BQp}$-representation it is equivalent to the 
representation on the $\ov\BQ_p$-points of a finite flat group scheme over $\BZp$.

\begin{lem} 
Let $\Phi$ be the component group of the N\'eron model $\sA/\BZp$ of $A/\BQp$. 
Then $A[\fkp]$ is finite at $p$ if and only if $\Phi[\fkp]\neq 0$.
\end{lem}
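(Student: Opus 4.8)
The plan is to exploit the Tate-curve uniformization recalled in the proof of Lemma~\ref{ord res lem}. Recall from there that $\CT = \Ta_\fkp A$ sits in a $G_\BQp$-equivariant exact sequence
\begin{equation}\label{eq:tate-ext}
0 \to \sO_\fkp(\veps\alpha^{-1}) \to \CT \to \sO_\fkp(\alpha) \to 0,
\end{equation}
coming from the uniformization $0 \to Y^\vee \otimes \BZp(1) \to \Ta_p A \to X \otimes \BZp \to 0$, where $X$ (resp.\ $Y$) is the character group of the toric part of the special fiber of the N\'eron model of $A$ (resp.\ $A^\vee$) over $\BZp$. Reducing \eqref{eq:tate-ext} mod $\fkp$ gives an extension of $\sO/\fkp = k$-lines $0 \to k(\chi\ov\alpha^{-1}) \to A[\fkp] \to k(\ov\alpha) \to 0$. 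The point is that $A[\fkp]$ is finite at $p$ precisely when this extension is one between the right objects \emph{in the flat category}, i.e.\ when the submodule $\mu_p \otimes (\text{line})$ --- which \emph{is} finite flat --- is \emph{not} a subgroup scheme of a genuinely larger finite flat group scheme, the obstruction to which is measured by the component group.

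First I would set up the comparison between $A[\fkp]$ and the $p$-divisible group / Raynaud flat models via the uniformization. The relevant statement, essentially from the theory of degenerating abelian varieties (Grothendieck, Raynaud, McCabe, Mumford, as cited), is that the connected-\'etale / component-group data of $\sA$ over $\BZp$ is governed by the pairing $j\colon X \times Y \to \ov\BQ_p^\times$ through the ``monodromy'' lattice $q \in \Hom(X, Y^\vee \otimes \ov\BQ_p^\times)$, and the component group is $\Phi \cong \Hom(X, \BZ)\otimes(\ord\text{-data})$; concretely for an elliptic curve $\Phi \cong \BZ/\ord_p(q_E)$, and in the $\GL_2$-type setting $\Phi$ is the cokernel of the ``valuation of $j$'' map $X \to \Hom(Y, \BZ)$, an $\sO$-module whose order is $\ord_p(\Delta_A)$-related. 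Then $\Phi[\fkp] \neq 0$ exactly says $p \mid$ the relevant order in an $\sO_\fkp$-sense, equivalently $\ord_p$ of the monodromy pairing is divisible by $\fkp$.

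Next I would match ``$\Phi[\fkp]\ne 0$'' with the flatness of $A[\fkp]$ directly. In one direction: if $\Phi[\fkp] = 0$, then the connected-\'etale sequence of the $\fkp$-divisible group $A[\fkp^\infty]$ splits the $\fkp$-torsion as (connected) $\oplus$ (\'etale), and one checks that the connected part is a finite flat $\mu$-type group scheme while the \'etale part corresponds to the unramified $\sO/\fkp$-line $k(\ov\alpha)$ --- but an unramified-at-$p$, \'etale $p$-group extension of $\BZp$ that is also supposed to be finite flat would force $\ov\alpha$ to be both unramified and have the ``wrong'' Hodge–Tate weight, contradicting non-finiteness; more carefully, finiteness would require the extension class in $H^1(G_\BQp, k(\chi))$ to lie in the flat (= Bloch–Kato ``$f$'') subspace, which by Kummer theory is $\BZp^\times \otimes k$ inside $\BQp^\times \otimes k = H^1_{\mathrm{cts}}(G_\BQp,\BZp(1))\otimes k$; the extension class of \eqref{eq:tate-ext} (twisted appropriately) is exactly the image of the Tate period/monodromy element $q_A$, whose image in $\BQp^\times \otimes k$ vanishes iff $\ord_p(q_A) \equiv 0 \pmod{\fkp}$, iff $\Phi[\fkp] \neq 0$. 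So the two conditions literally coincide via the identification $\Phi[\fkp] \cong (q_A \bmod (\sO_\fkp^\times \cdot \fkp\text{-powers}))$. Conversely, if $\Phi[\fkp]\ne 0$, the class is in $H^1_f$ and Raynaud/Fontaine–Laffaille (here $p \geq 5 > 2$, and the weights $0,1$ are in the Fontaine–Laffaille range) promotes it to an actual finite flat model, so $A[\fkp]$ is finite at $p$.

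The main obstacle I expect is the precise bookkeeping identifying the $\sO_\fkp$-module $\Phi[\fkp]$ with the reduction of the monodromy/period element: one must be careful that $X$ and $Y$ need not be $\sO$-free of rank one on the nose (only locally free), that $\sO_0 \subset \sO$ may be non-maximal, and that passing to $\fkp$-adic completions interacts correctly with the pairing $j$ and with $\ord_p$. Once that dictionary is in place --- component group $\leftrightarrow$ valuation of the Tate parameter, and ``extension class is flat'' $\leftrightarrow$ ``valuation is $\equiv 0 \bmod \fkp$'' via Kummer theory --- the equivalence is immediate. The other point requiring a line of care is that the \'etale quotient $k(\ov\alpha)$ is genuinely non-flat as a standalone $\BZp$-group when $\ov\alpha$ ramifies --- but it is \emph{unramified} here, so the subtlety is rather that an \emph{unramified} \'etale $\BZ/p$ together with a connected $\mu_p$-type piece \emph{can} be finite flat (it's $\BZ/p \oplus \mu_p$), and the non-finiteness must come from the \emph{non-split} nature of the extension when $\Phi[\fkp]=0$; this is exactly what the Kummer-theory computation of the extension class pins down.
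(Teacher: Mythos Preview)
Your cohomological approach is essentially correct --- identifying the extension class of $A[\fkp]$ with the Tate period via Kummer theory, and then matching ``class lies in $H^1_f$'' with ``$\fkp$-divisibility of $\ord_p$ of the monodromy pairing'' (i.e.\ $\Phi[\fkp]\neq 0$) --- but it is a genuinely different route from the paper's.

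The paper argues directly with finite flat subgroup schemes. Writing $A[\fkp]^f$ and $A^0[\fkp]^f$ for the (\,$\ov\BQ_p$-points of the) maximal finite flat closed subgroup schemes of $\sA[\fkp]$ and $\sA^0[\fkp]$ respectively, one has $\Phi[\fkp]\cong A[\fkp]^f/A^0[\fkp]^f$. Since $A$ has purely toric reduction, the identity component $\sA^0_{\BF_p}$ is a torus of dimension $[F:\BQ]$ on which $\sO$ acts faithfully, so $A^0[\fkp]^f$ has $k$-dimension exactly one. As $A[\fkp]^f\subset A[\fkp]$ and $\dim_k A[\fkp]=2$, one concludes $\Phi[\fkp]\neq 0 \iff \dim_k A[\fkp]^f\geq 2 \iff A[\fkp]^f=A[\fkp]$, which is the definition of $A[\fkp]$ being finite at $p$. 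No extension classes, no Fontaine--Laffaille, and crucially no identification of $\Phi$ with the cokernel of a valuation map on $X$ and $Y$ --- precisely the bookkeeping you flagged as your main obstacle.

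What each buys: the paper's argument is shorter and avoids the locally-free-but-not-free issue for $X,Y$ entirely. Your approach, on the other hand, computes \emph{which} extension classes are finite (namely those with $\ov\psi_{\mathrm{ur}}(\ov c)=0$), and this is exactly what the paper needs later in its treatment of the $\fkL$-invariant; indeed the paper invokes this criterion there, with references to Edixhoven and Serre, rather than proving it. So your route would make that later step self-contained, at the cost of the $\sO$-module bookkeeping here. One small correction: your sentence ``whose image in $\BQ_p^\times\otimes k$ vanishes iff $\ord_p(q_A)\equiv 0\pmod{\fkp}$'' should read ``whose image in $(\BQ_p^\times\otimes k)/(\BZ_p^\times\otimes k)\cong k$ vanishes''; the class itself need not vanish, only its projection past the flat subspace.
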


\begin{proof} 
Let $A[\fkp]^f$ (resp.~$A^0[\fkp]^f$) be the $\ov\BQ_p$-points of the the maximal finite flat subgroup scheme of $\sA[\fkp]$
(resp.~$\sA^0[\fkp]$). Note that $A[\fkp]^f$ and $A^0[\fkp]^f$ are naturally subgroups of $A[\fkp]$: the first is the subgroup of 
points that extend to $\ov\BZ_p$-points on $\sA$ and the second is the subgroup of such points that reduce to the connected component of the identity on the special fibre. 
Then $\Phi[\fkp]\cong A[\fkp]^f/A^0[\fkp]^f$ 
(even as finite flat group schemes). 
Since $p\mid\mid N$ and $g$ has trivial nebentypus, the reduction of $A$ at $p$ is purely toric (cf.~\cite[Chap.~2, Prop.~1]{MW}):
the connected component of the identity of the special fibre of $\sA$ is a torus $T$
over $\BF_p$. The dimension of this torus equals the dimension of $A$, which is $[F:\BQ]$, and there is a faithful action of
$\sO$ on $T$. It follows that $A^0[\fkp]^f\otimes\BF_p$ has rank $[k:\BF_p]$ as a group scheme over $\BF_p$ and hence
that $A^0[\fkp]^f$ is a one-dimensional $k$-space. Therefore,
$$
\Phi[\fkp]\neq 0 \iff A[\fkp]^f/A^0[\fkp]^f \neq 0 \iff \dim_k A[\fkp]^f\geq 2.
$$
Since $A[\fkp]^f\subset A[\fkp]$ and $\dim_k A[\fkp] =2$, it follows that 
$$
\Phi[\fkp]\neq 0 \iff A[\fkp]^f = A[\fkp].
$$
The lemma follows.
\end{proof}

Suppose $V$ is irreducible (equivalently, $\ov\rho$ is irreducible). Since $V_0\otimes_{k_0} k  \cong V = A[\fkp]$ in this case, we 
then also have:

\begin{cor}\label{finite at p}
Suppose $\bar\rho$ is irreducible. Then $\bar\rho$ is finite at $p$ if and only if $\Phi[\fkp]\neq 0$.
\end{cor}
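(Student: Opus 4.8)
The plan is to reduce the corollary to the preceding lemma, the only extra input being that finiteness at $p$ is insensitive to enlarging the coefficient field from $k_0$ to $k$. Since $\ov\rho$ is irreducible, $V$ is irreducible, so $V^{ss}=V$ and hence $A[\fkp]=V\cong V_0\otimes_{k_0}k$ as $G_\BQ$-modules, a fortiori as $G_{\BQp}$-modules. Writing $m=[k:k_0]$ and choosing a $k_0$-basis of $k$ identifies $V_0\otimes_{k_0}k$ with $V_0^{\oplus m}$ compatibly with the $G_{\BQp}$-action; thus $V_0$ is a $G_{\BQp}$-stable direct summand of $A[\fkp]$, while $A[\fkp]$ is a direct sum of $m$ copies of $V_0$. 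So it suffices to prove that $V_0$ is finite at $p$ if and only if $A[\fkp]$ is finite at $p$; combined with the preceding lemma this gives $\ov\rho$ finite at $p$ $\iff$ $\Phi[\fkp]\neq 0$.

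For the implication that finiteness of $V_0$ forces finiteness of $A[\fkp]$, I would simply take a finite flat group scheme $\mathcal G_0/\BZp$ with $\mathcal G_0(\ov\BQ_p)\cong V_0$ and pass to the $m$-fold fibre product $\mathcal G_0^{\times m}$, which is again finite flat over $\BZp$ and has generic fibre $V_0^{\oplus m}\cong A[\fkp]$. For the converse --- the only step that is not completely formal --- suppose $A[\fkp]\cong \mathcal G(\ov\BQ_p)$ for a finite flat group scheme $\mathcal G/\BZp$. Since $\BQp$ has characteristic $0$, $\mathcal G_{\BQp}$ is finite \'etale, and the $G_{\BQp}$-stable submodule $V_0\subseteq A[\fkp]$ corresponds to a closed subgroup scheme $\mathcal H\subseteq \mathcal G_{\BQp}$ over $\BQp$. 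Let $\ov{\mathcal H}$ be the scheme-theoretic closure of $\mathcal H$ in $\mathcal G$; it is a closed subgroup scheme of $\mathcal G$, and because the scheme-theoretic closure of a closed subscheme of the generic fibre of a scheme finite flat over a DVR is again finite and flat over that DVR (the structure sheaf of the closure embeds into a finite-dimensional $\BQp$-vector space, hence is $\BZp$-torsion-free and finitely generated, hence free), $\ov{\mathcal H}/\BZp$ is finite flat with generic fibre $V_0$. Hence $V_0$, i.e.\ $\ov\rho$, is finite at $p$, completing the equivalence.

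Since there is no genuinely hard step here, the point requiring the most care --- what I would flag as the main obstacle --- is precisely this last descent: keeping the definition of ``finite at $p$'' straight and checking that the finite-flat structure on $A[\fkp]$ restricts to the $k_0$-subrepresentation $V_0$. One can of course also argue more explicitly: by Lemma \ref{ord res lem}, after an unramified, at most quadratic, twist by $\ov\alpha$ the restriction $\ov\rho|_{G_{\BQp}}$ is an extension of the trivial character by $\chi$, and finiteness at $p$ of such an extension is the \emph{peu ramifi\'ee} condition, which one can read off from the Tate-curve uniformization of $A$ at $p$ exactly as in the proof of the preceding lemma; but this would essentially re-prove that lemma, so I would prefer the formal argument above.
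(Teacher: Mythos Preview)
Your proposal is correct and follows essentially the same approach as the paper: reduce to the preceding lemma via the observation that $V_0$ is finite at $p$ if and only if $V\cong V_0\otimes_{k_0}k$ is. The paper states this equivalence as ``clear'' without further comment; you have simply unpacked it by writing $V$ as $V_0^{\oplus m}$ and using the product/scheme-theoretic closure argument, which is exactly the standard justification.
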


\noindent This follows directly from the preceding lemma as, clearly, $V_0$ is finite at $p$ if and only if $V$ is.

\subsection{Split multiplicative reduction, the $\fkL$-invariant, and Hypothesis $\fkL$}\label{L-inv}
Let $g$ and $\fkp$ be as in \ref{newform-gal}. Suppose $p\mid\mid N$ and $a(p) = 1$. In this case, we say that $g$ has
{\em split multiplicative reduction}, following the terminology for elliptic curves. 

In \cite{MTT} Mazur, Tate, and Teitelbaum defined
an $\fkL$-invariant $\fkL(g) = \fkL(\CV)\in F_{\fkp}$ for $g$. We recall this here.
Returning to the notation of the proof of Lemma \ref{ord res lem}, composition of $j$ with $\ord_p$ induces a non-degenerate pairing
$$
\alpha_p: X\otimes\BQ\times Y\otimes\BQ\stackrel{\ord_p\circ j}{\rightarrow}\BQ.
$$
Similarly, composition of $j$ with the $p$-adic logarithm\footnote{We take
this to be the Iwasawa branch: $\log_p p = 0$.} gives another pairing
$$
\beta_p: X\otimes\BQp\times Y\otimes\BQ_p\stackrel{\log_p\circ j}{\rightarrow}\BQ_p.
$$
As $X\otimes\BQp$ and $Y\otimes\BQp$ are both free $\sO\otimes\BQp = F\otimes\BQp$-modules of rank one,
there exists an element $\fkL\in F\otimes\BQp$ such that $\beta_p = \fkL\cdot \alpha_p$. Then 
$\fkL(\CV)\in F_{\fkp}$ is defined to be the $\fkp$-component of $\fkL$. That is, $\fkL(\CV)$ is the image of $\fkL$ under the projection
$F\otimes\BQp \twoheadrightarrow F_{\fkp}$.

As explained by Greenberg and Stevens \cite[\S3]{GrSt}, the
$\fkL$-invariant can also be defined as follows. 
We have
$$
H^1(\BQp,F_{\fkp}) = \Hom_{cts}(G_\BQp,F_{\fkp}) = \Hom_{cts}(G_\BQp^{ab,p},F_{\fkp}),
$$
where $G_{\BQp}^{ab,p}$ is the maximal abelian pro-$p$ quotient of $G_{\BQp}$. Local class field theory
(normalized so that the reciprocity law takes uniformizers to arithmetic Frobenius elements)
gives an identification
$$
\varprojlim_{n} \BQ_p^\times/(\BQ_p^\times)^{p^n} \isoarrow G^{ab,p}.
$$
Let $u\in 1+p\BZp$ be a topological generator. 
From the decomposition $\BQ_p^\times = p^\BZ \times \BZ_p^\times$
we then obtain an $F_{\fkp}$-basis $\{\psi_\ur$, $\psi_\cyc\}$ of $H^1(\BQp,F_{\fkp})$
with 
$$
\psi_\ur(p) = 1 = \psi_\cyc(u) \ \ \text{and} \ \ \psi_\ur(u)=0 = \psi_\cyc(p).
$$

By Lemma \ref{ord res lem}, as a $G_\BQp$-representation $\CV$ can be realized as an extension
$$
0\rightarrow F_{\fkp}(1) \rightarrow \CV \rightarrow F_{\fkp} \rightarrow 0.
$$
This extension is well-defined up to isomorphism.
Let $c\in H^1(\BQp,F_{\fkp}(1))$ be the class associated with this extension; this is well-defined up to $F_{\fkp}^\times$-multiple.
Kummer theory gives an identification
$$
(\varprojlim_{n}\BQ_p^\times/(\BQ_p^\times)^{p^n})\otimes_\BZp F_{\fkp} \isoarrow H^1(\BQp,F_{\fkp}(1)).
$$
As $p\mid\mid N$, the $G_\BQp$-representation $\CV$ is semistable but not crystalline: this follows from the previously
made observation that $A$ has purely toric reduction and the description of 
$\CV$ as a $G_\BQp$-representation in the proof of Lemma \ref{ord res lem}.
In particular, 
$c$ is not identified with an element of $(\varprojlim_n \BZ_p^\times/(\BZ_p^\times)^{p^n})\otimes_\BZp F_{\fkp}$, which is the 
subspace of crystalline extensions \cite[Ex.~3.9]{BlochKato}, and so $\psi_\ur(c)\neq 0$. The $\fkL$-invariant of $\CV$ is then just
$$
\fkL(g) = \fkL(\CV) = \log_pu \cdot \psi_\ur(c)^{-1}\psi_\cyc(c) \in F_{\fkp},
$$
which is clearly independent of the choices of $c$ and the topological generator $u$. It is expected that $\fkL(g)\neq 0$, but in general this is only known 
if $g$ is the newform associated with an elliptic curve.

We may take $c$ to be the image of the class in $H^1(\BQp,\sO_{\fkp}(1))$ of the $G_\BQp$-representation $\CT$ as the latter
can also be realized as a $G_\BQp$-extension 
$$
0\rightarrow \sO_{\fkp}(1) \rightarrow \CT \rightarrow \sO_{\fkp} \rightarrow 0
$$
that yields $\CV$ by extension of scalars.
This choice of the class $c$ is well-defined up to $\sO_{\fkp}^\times$-multiple. The image of $c$ in $H^1(\BQp, k(1))$
is just the class $\ov c$ of the reduction of $\CT$ modulo $\fkp$. That is, $\ov c$ is the class of the $G_\BQp$-representation $V$,
which is well-defined up to $k^\times$-multiple.  Since $V$ is assumed to be irreducible, these classes are independent 
of the isogeny class of the abelian variety $A$, up to the indicated multiples. 
Replacing $F_{\fkp}$ with $k$ in the definition of $\psi_\ur$ and $\psi_\cyc$ yields a $k$-basis $\{\ov\psi_\ur,\ov\psi_\cyc\}$
of $H^1(\BQp,k)$ such that, for the choices of $c$ and $\ov c$ in the previous paragraph
$$
\ov\psi_\ur(\ov c) = \psi_\ur(c) \pmod\fkp \ \ \text{and} \ \ \ov\psi_\cyc(\ov c) = \psi_\cyc(c) \pmod\fkp.
$$

Recall that $V$ is said to be finite at $p$ if $V$ arises as the $G_\BQp$-representation on the $\ov\BQ_p$-points of a finite flat group
scheme over $\BZp$. Just as $\psi_\ur(c) \neq 0$ if and only if $\CV$ is not crystalline as a $G_\BQp$-representation, 
$\ov\psi_\ur(\ov c))\neq 0$ if and only if $V$ is not finite at $p$ (see \cite[Prop.~8.2]{Edix-SerreWeight} and
\cite[(2.4.7)]{Serre-Duke}). 

The following lemma will help us get around the problem of the possible vanishing of the $\fkL$-invariant for a general $g$.

\begin{lem}\label{Linv lem}
If $V$ is not finite at $p$, then $\ord_\fkp(\fkL(\CV)) = \ord_\fkp(p)$ if and only if $\ov\psi_\cyc(\ov c)\neq~0$.
\end{lem}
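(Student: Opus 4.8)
The plan is to reduce everything to tracking $\fkp$-adic valuations in the Greenberg--Stevens formula $\fkL(\CV) = \log_p u\cdot\psi_\ur(c)^{-1}\psi_\cyc(c)$, using the integral choice of $c$ fixed in the paragraph preceding the lemma; there is no serious obstacle here, only careful bookkeeping. First I would make the Kummer-theoretic description explicit: using $\BQ_p^\times = p^\BZ\times\BZ_p^\times$ and the fact that $\mu_{p-1}$ dies in the maximal pro-$p$ quotient, one has $\varprojlim_n\BQ_p^\times/(\BQ_p^\times)^{p^n} = \BZp\cdot p\oplus\BZp\cdot u$, hence $H^1(\BQp,\sO_\fkp(1)) = \sO_\fkp\cdot p\oplus\sO_\fkp\cdot u$. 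Writing the chosen class as $c = a\cdot p + b\cdot u$ with $a,b\in\sO_\fkp$, the defining relations $\psi_\ur(p)=1=\psi_\cyc(u)$ and $\psi_\ur(u)=0=\psi_\cyc(p)$ give $\psi_\ur(c) = a$ and $\psi_\cyc(c) = b$; reducing modulo $\fkp$ gives $\ov\psi_\ur(\ov c) = a\bmod\fkp$ and $\ov\psi_\cyc(\ov c) = b\bmod\fkp$ for the class $\ov c$ of $V$.

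Next I would record two elementary facts. Since $V$ is assumed not to be finite at $p$, the criterion recalled just before the lemma (following \cite[Prop.~8.2]{Edix-SerreWeight} and \cite[(2.4.7)]{Serre-Duke}) gives $\ov\psi_\ur(\ov c)\neq 0$, i.e.\ $a\in\sO_\fkp^\times$, so that $\ord_\fkp(\psi_\ur(c)) = 0$. And since $u$ is a topological generator of $1+p\BZp$ and $\log_p$ induces an isomorphism of $\BZp$-modules $1+p\BZp\isoarrow p\BZp$, we get $\log_p u\in p\BZp^\times$, whence $\ord_\fkp(\log_p u) = \ord_\fkp(p)$.

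Finally I would substitute these into the $\fkL$-invariant formula to obtain
$$\ord_\fkp(\fkL(\CV)) = \ord_\fkp(\log_p u) - \ord_\fkp(\psi_\ur(c)) + \ord_\fkp(\psi_\cyc(c)) = \ord_\fkp(p) + \ord_\fkp(b),$$
with $\ord_\fkp(b)\geq 0$ because $b\in\sO_\fkp$. Thus $\ord_\fkp(\fkL(\CV)) = \ord_\fkp(p)$ if and only if $b\in\sO_\fkp^\times$, i.e.\ $b\not\equiv 0\pmod{\fkp}$, which is exactly the condition $\ov\psi_\cyc(\ov c)\neq 0$; this proves both implications at once. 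The only point needing a moment's care is the integrality of $\psi_\ur(c)$ and $\psi_\cyc(c)$: this is precisely why one works with the class $c$ arising from the $G_\BQp$-stable lattice $\CT$ rather than an arbitrary representative of the extension class in $H^1(\BQp,F_\fkp(1))$, and it is what guarantees $\ord_\fkp(\psi_\cyc(c))\geq 0$, so that the equivalence comes down to whether or not $\ov\psi_\cyc(\ov c)$ vanishes.
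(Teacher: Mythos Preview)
Your proof is correct and follows essentially the same approach as the paper's own proof: both use the not-finite-at-$p$ hypothesis to get $\psi_\ur(c)\in\sO_\fkp^\times$, note $\log_p u\in p\BZ_p^\times$, and then read off the equivalence from the valuation formula $\ord_\fkp(\fkL(\CV)) = \ord_\fkp(p) + \ord_\fkp(\psi_\cyc(c))$. You simply make the Kummer coordinates $a,b$ explicit where the paper works directly with $\psi_\ur(c)$ and $\psi_\cyc(c)$, but the argument is the same.
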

\noindent In particular, if $V$ is not finite at $p$ and $\ov\psi_\cyc(\ov c)\neq 0$, then $\fkL(V)\neq 0$.

\begin{proof} Since $V$ is not finite at $p$, $\ov\psi_\ur(\ov c) \neq 0$ and so $\psi_\ur(c)\in \sO_{\fkp}^\times$. 
Therefore, in this case, $\ord_\fkp(\fkL(\CV)) = \ord_\fkp(\log_p u) + \ord_\fkp(\psi_\cyc(c))$. As $\log_p u\in p\BZ_p^\times$,
if follows that 
$\ord_\fkp(\fkL(\CV)=\ord_\fkp(p)$ if and only if $\psi_\cyc(c)\in\sO_{\fkp}^\times$, which holds
if and only if $\ov\psi_\cyc(\ov c) \neq 0$.
\end{proof}

For ease of later reference we consider the following hypotheses for a pair $(g,\fkp)$:
\medskip

\noindent{\it Hypothesis $\fkL$}
\begin{itemize}
\item If $p\mid\mid N$ and $a(p)=1$, then $\ov\psi_\cyc(\ov c) \neq 0$.
\end{itemize}

\noindent Clearly, this is a hypothesis only on the residual representation $V$ (even on $\ov\rho$ if $V$ is irreducible). 

\begin{remark} If $A$ is an elliptic curve with split multiplicative reduction at $p$, then the parameterization in the proof of
Lemma \ref{ord res lem} is just the Tate parameterization:  $X=Y$ is a free $\BZ$-module of rank one and the image of $j:X\rightarrow 
T(\BQp)\cong \BQ_p^\times$ is $q_A^\BZ$ for some $q_A\in \BQ_p^\times$ with $\ord_p (q_A) > 0$; this is the so-called Tate period of
$A$. It then follows from the definitions that the $\fkL$-invariant in this case is just $\fkL(\CV) = \fkL(V_pA) = \log_p q_A/\ord_p (q_A)$.
Since $A$ does not have complex multiplication, 
$q_A$ is transcendental 
by a theorem of Barr\'e-Sirieix, Diaz, Gramain, and Philibert \cite{BDGP}, and so $\log_p q_A\neq 0$.
 In particular, the $\fkL$-invariant is non-zero in this case. This non-vanishing is not known in general 
for an arbitrary $g$ with split multiplicative reduction. The purpose of Lemma \ref{Linv lem} is to give conditions that ensure the
non-vanishing of the $\fkL$-invariant for a general $g$ and that continue to hold for suitable newforms congruent to~$g$.
\end{remark}

For ease of later use we also note that in the case that $A$ is an elliptic curve with split multiplicative reduction, the conditions in 
Lemma \ref{Linv lem} can be rewritten in terms of the Tate period $q_A$. 

\begin{lem}\label{Linv lem EC}
Suppose $A$ is an elliptic curve with split multiplicative reduction at $p$ and let $q_A\in\BQ_p^\times$ be its Tate period. Then
\begin{itemize}
\item[\rm (i)] $A[p]$ is not finite at $p$ if and only if $p\nmid\ord_p(q_A)$,
\item[\rm (ii)] $\ov\psi_\cyc(\ov c)\neq 0$ if and only if $\ord_p(\log_p q_A) = 1$  (that is, $\log_p q_A \in p\BZ_p^\times$).
\end{itemize}
\end{lem}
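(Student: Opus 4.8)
The strategy is to specialize the general setup of Sections \ref{L-inv} and the proof of Lemma \ref{ord res lem} to the case of an elliptic curve $A/\BQ_p$ with split multiplicative reduction, where the uniformization becomes the classical Tate parameterization. Here $F = \BQ$, $\sO = \sO_{\fkp} = \BZ_p$, $k = \BF_p$, and as noted in the Remark preceding the statement, $X = Y \cong \BZ$ with the map $j\colon X \to T(\BQ_p) \cong \BQ_p^\times$ having image $q_A^{\BZ}$. Thus the extension class $c \in H^1(\BQ_p,\BZ_p(1))$ of the $G_{\BQ_p}$-representation $\CT = \Ta_p A$ is, under the Kummer identification $\varprojlim_n \BQ_p^\times/(\BQ_p^\times)^{p^n} \otimes \BZ_p \isoarrow H^1(\BQ_p,\BZ_p(1))$, simply the image of $q_A$ (well-defined up to $\BZ_p^\times$-multiple, matching the indeterminacy in $c$).

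For part (i): by Corollary \ref{finite at p} (or directly from the preceding lemma), $A[p]$ is finite at $p$ if and only if $\Phi[p] \neq 0$, where $\Phi$ is the component group of the N\'eron model. For a Tate curve the component group is cyclic of order $\ord_p(q_A)$, so $\Phi[p] \neq 0$ precisely when $p \mid \ord_p(q_A)$; negating gives the claim. Alternatively, one can argue cohomologically: $A[p]$ not finite at $p$ is equivalent to $\ov\psi_\ur(\ov c) \neq 0$ by the cited results of Edixhoven and Serre, and under the Kummer description $\ov\psi_\ur(\ov c)$ is (up to a unit) $\ord_p(q_A) \bmod p$, which is nonzero iff $p \nmid \ord_p(q_A)$. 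Either route is short; I would probably present the cohomological one for uniformity with part (ii).

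For part (ii): we must assume $A[p]$ is not finite at $p$ — otherwise the right-hand condition still makes sense but the lemma as phrased is only invoked in that case via Lemma \ref{Linv lem}; I will simply include the hypothesis or note that when $A[p]$ is finite both conditions should be compared directly. Granting $A[p]$ not finite, so $\ov\psi_\ur(\ov c) \neq 0$, the content is to identify $\ov\psi_\cyc(\ov c)$. Using the compatibility $\ov\psi_\cyc(\ov c) = \psi_\cyc(c) \bmod \fkp$ from Section \ref{L-inv} and the fact that $c$ corresponds to $q_A \in \BQ_p^\times$ under Kummer theory, we compute $\psi_\cyc(c)$: writing $q_A = p^{\ord_p(q_A)} \cdot v$ with $v \in \BZ_p^\times$ and decomposing $v$ via $\BZ_p^\times = \mu_{p-1} \times (1+p\BZ_p)$ with $u$ a topological generator of $1+p\BZ_p$, one gets $\psi_\cyc(c) = \log_p(q_A)/\log_p(u)$ (the cyclotomic coordinate picks out the $1+p\BZ_p$-component, and $\psi_\ur$ annihilates it while $\psi_\cyc(u) = 1$). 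Hence $\ov\psi_\cyc(\ov c) \neq 0$ iff $\psi_\cyc(c) \in \BZ_p^\times$ iff $\ord_p(\log_p q_A) = \ord_p(\log_p u) = 1$, since $\log_p u \in p\BZ_p^\times$. This also re-derives the Remark's formula $\fkL(\CV) = \log_p q_A / \ord_p q_A$ as a sanity check.

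The only genuinely delicate point is bookkeeping the normalizations: matching the reciprocity-law convention (uniformizers $\mapsto$ arithmetic Frobenius) used to define $\psi_\ur, \psi_\cyc$ with the Kummer identification and with the sign/direction of the extension $0 \to \BZ_p(1) \to \CT \to \BZ_p \to 0$ coming from the Tate uniformization. None of this is hard, but it is exactly where an off-by-a-unit or a transposed basis could creep in; I would pin it down by checking the formula against the known $\fkL$-invariant of the Tate curve, $\fkL = \log_p q_A/\ord_p q_A$, which the general definition in Section \ref{L-inv} must reproduce. Everything else — the structure of the component group of a Tate curve, the behavior of $\log_p$ on $\BQ_p^\times$, the cited Edixhoven--Serre criterion for finiteness — is standard and can be quoted.
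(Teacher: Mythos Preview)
Your proposal is correct and follows essentially the same approach as the paper: both arguments identify $\ov c$ with the image of $q_A$ under the Kummer map via the Tate parameterization, then read off $\ov\psi_\ur(\ov c)$ and $\ov\psi_\cyc(\ov c)$ from the decomposition $q_A = \omega\cdot u^a\cdot p^t$ (the paper writes it this way; your $\psi_\cyc(c) = \log_p q_A/\log_p u$ is exactly the exponent $a$). One small point: your worry that part (ii) requires the hypothesis ``$A[p]$ not finite at $p$'' is unnecessary---neither the statement nor the computation of $\ov\psi_\cyc(\ov c)$ uses $\ov\psi_\ur(\ov c)\neq 0$, and the paper's proof of (ii) proceeds without it.
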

\noindent In part (ii), $\ov c$ is the class in $H^1(\BQ_p,\BF_p(1))$ associated to $A[p]$ as in Lemma \ref{Linv lem}.

\begin{proof}
Part (i) follows from the Tate parameterization $A(\ov\BQ_p) \cong \ov\BQ_p^\times/q_A^\BZ$:
$\ov c$ is the image of $q_A$ in $\BQ_p^\times/(\BQ_p^\times)^p \isoarrow H^1(\BQ_p,\BF_p(1))$, and this belongs to the
image of $\BZ_p^\times/(\BZ_p^\times)^p$ (that is, $A[p]$ is finite at $p$) if and only if $p\mid \ord_p(q_A)$. 
Writing $q_A = \omega\cdot u^a \cdot p^t$ with $\omega\in \mu_{p-1}$ and $a\in \BZ_p$ (recall that $u$ is a topological
generator of $1+p\BZ_p$), we see from the definition of $\ov\psi_\cyc$ that $\ov\psi_\cyc(\ov c)\neq 0$ if and only if $p\nmid a$,
that is, if and only if $\ord_p(\log_p q_A) = \ord_p(pa) = 1$. This proves part (ii).
\end{proof}

\subsection{Convention for the modifier `$g$'}
If it is necessary to distinguish some of the objects associated with a particular newform $g$ (e.g., $a(n)$, $\sO$, $A$, $V_0$, etc.) we will indicate
them by a subscript `$g$' (e.g, $a_g(n)$, $\sO_g$, $A_g$, $V_{g,0}$, etc.).

\subsection{Kolyvagin primes and the set $\Lambda$}

Let $g$ and $\fkp$ be as in \ref{newform-gal}.
A prime $\ell\nmid NDp$ is a called a {\it Kolyvagin prime} (with respect to $g$ and $\fkp$) if $\ell$ is inert in $K$ and the Kolyvagin index
$$
M(\ell) = \min\{\ord_\fkp(\ell+1),\ord_\fkp(a(\ell))\}
$$ 
is positive. We let $\Lambda$ be the set of squarefree products $n$ of such
Kolyvagin primes, and for $n\in\Lambda$ we put
$$
M(n) = \min\{M(\ell) \ : \ \ell\mid n\}.
$$

\subsection{Admissible primes and the set $\Lambda'$}

Let $g$ and $\fkp$ be as in \ref{newform-gal}.
A prime $q\nmid NDp$ is called {\em admissible} (with respect to $g$ and $\fkp$) if $q$ is inert in $K$, $p\nmid (q^2-1)$, and $\ord_\fkp((q+1)^2-a(q)^2)\geq 1$.
We let $\Lambda'$ be the set of squarefree products $m$ of such admissible primes, and $\Lambda^{',\pm}\subset\Lambda'$
the subset of $m$ such that $(-1)^{\nu(m)}=\pm 1$, where $\nu(m)$ is the number of prime factors of $m$.

\subsection{Permissible factorizations}
Given  a positive integer $M$, a factorization $M=M^+M^-$ is {\it permissible} (with respect to $K$) if $M^+$ and $M^-$ are coprime positive integers,
$M^-$ is square-free, $M^+$ is divisible only by primes that split in $K$, and $M^-$ is divisible only by primes that are inert in $K$.
Note that given $K$ and $M$, a permissible factorization need not exist, but if one exists then it is, of course, unique.

\subsection{Hypothesis $\heart$ for $(g,\fkp,K)$}

Let $g$ and $\fkp$ be as in \ref{newform-gal}. Let $\Ram(\ov{\rho})$ be the set of all primes $\ell\mid\mid N$, $\ell\neq p$, such 
that $\ov\rho$ is ramified at $\ell$.
We consider the following hypothesis for $(g,\fkp,K)$: 
\medskip

\noindent {\em Hypothesis $\heart$}
\begin{itemize}
\item[(1)] A permissible factorization $N=N^+N^-$ exists 
($N^-=1$ is allowed).
\item[(2)] 
 $\Ram(\ov{\rho}_{})$ contains all primes $\ell\neq p$ such that $\ell\mid\mid N^+$ and all primes $\ell\mid N^-$ such that $\ell\equiv \pm 1\pmod p$.
  \item[(3)] $\Ram(\ov{\rho})\neq\emptyset$, and either $\Ram(\ov{\rho})$ contains a prime $\ell\mid N^-$ or there are at least two primes $\ell\mid\mid N^+$.
 \item[(4)] For all primes $\ell$ with $\ell^2\mid N^+$,  $H^1(\BQ_\ell, V)=0$ (equivalently, $V^{G_{\BQ_\ell}}=0$). 
\end{itemize}

\begin{remark}
Part (3) implies that $\ov\rho$ is ramified at some prime $\ell\neq p$ such that $\ell\mid\mid N$. For such an $\ell$, the image of $I_\ell$
under $\ov\rho$ is unipotent. In particular, since $\ov\rho$ is semisimple it must be that $\ov\rho$ is irreducible.
That is, implicit in Hypothesis $\heart$ is the irreducibility of $\ov\rho$ (and hence of $V$).
\end{remark}

\begin{remark}
If $A$ is an elliptic curve and $p\geq 5$, then $(4)$ is always satisfied (see \cite[Lem.~5.1(2)]{Z13}). So in this case Hypothesis $\heart$ is just Hypothesis $\spade$ from
the Introduction.
\end{remark}

\section{Shimura Curves and Heegner Points}

Let $N$ be a positive integer and suppose $N=N^+N^-$ is factorization with
$N^+$ and $N^-$ coprime positive integers and $N^-$ square-free.

\subsection{Shimura curves and Shimura sets}

If $N^-$ is a product of an even number of primes ($N^-=1$ is allowed), 
let $B=B_{N^-}$ be the indefinite quaternion algebra of discriminant $N^-$ and $R \subset B$ a fixed 
Eichler order of level $N^+$.  We then let $X_{N^+,N^-}$ be the associated Shimura curve. 
This curve has a canonical model over $\BQ$ with complex parameterization:
$$
X_{N^+,N^-}(\BC) = B^\times\bs [\mathfrak{h}^{\pm} \times \wh B^\times/ \wh R^\times],
$$
where $\wh B = B\otimes\wh\BZ$ and $\wh R = R\otimes\wh\BZ$. The action of $B^\times$ on $\mathfrak{h}^\pm=\BC\backslash\BR$ is via an isomorphism $B\otimes\BR \cong  M_2(\BR)$ and the usual action of $\GL_2(\BR)$ on $\mathfrak{h}^\pm$.

If $N^-$ is a product of an odd number of primes, the role of the Shimura curve (which does not exist) is frequently played by the Shimura
set $X_{N^+,N^-}$ determined by taking $B$ to be the definite quaternion algebra $B=B_{N^-\infty}$ of discriminant $N^-\infty$
and $R\subset B$ an Eichler order of level $N^+$ as before:
$$
X_{N^+,N^-} = B^\times \bs \wh B^\times/ \wh R^\times.
$$
This is a finite set. It classifies locally-free left $R$-modules of rank one.

Frequently, when describing certain constructions we will assume that we have fixed identifications
$\wh B^{N^-}  = M_2(\BA_f^{N^-})$ such that $\wh R^{N^-}$ is identified with the subring of
$M_2(\wh\BZ^{N^-})$ with lower left entry a multiple of $N^+$. Here the superscript `$N^-$' denotes the 
finite adeles away from the primes dividing $N^-$.

\subsection{Some Hecke rings and Hecke actions}

Let $S(N^+,N^-)$ be the $N^-$-new subspace of $S_2(\Gamma_0(N))$. 
Let $\BT_{N^+,N^-}$  be the usual Hecke ring acting faithfully on $S(N^+,N^-)$ and let $\BT_0(N^+,N^-)$ be its $p$-adic completion. 
These are generated by the Hecke operators $T_\ell$, for $\ell\nmid N$, and $U_\ell$, for $\ell\mid N$. Note that
each $U_\ell$, $\ell\mid N^-$, acts as an involution (the eigenvalues of such a $U_\ell$ are $\pm 1$ since 
$\ell\mid\mid N$ and the nebentypus is trivial).

Suppose $N^-$ is a product of an even number of primes. 
There is a natural action of $\BT_{N^+,N^-}$ on the Jacobian 
$$
J(X_{N^+,N^-})  = \Pic^0(X_{N^+,N^-}).
$$
which gives an inclusion $\BT_0(N^+,N^-)\hookrightarrow\End(J(X_{N^+,N^-}))\otimes\BZp$.
In fact, the actions of the $T_\ell$ and $U_\ell$ can be defined through correspondences exactly as in the $N^-=1$ case.

If $N^-$ is a product of an odd number of primes, then there is an action of $\BT_{N^+,N^-}$ on the $0$-divisors 
$$
\CS_{N^+,N^-} = \{\sum a_x\cdot x \in \BZ[X_m] \ : \ \sum a_x = 0\}
$$ 
of the Shimura set $X_{N^+,N^-}$. This gives a homomorphism $\BT_0(N^+,N^-)\hookrightarrow\End_\BZ(\CS_{N^+,N^-})\otimes\BZ_p$.

Both these actions reflect the Jacquet-Langlands correspondence, which gives a Hecke-equivariant isomorphism between 
$S(N^+,N^-)$ and the space of weight $2$ cuspforms of level $\wh R^\times$ for the multiplicative group $B^\times$ 
(when $N^-$ is a product of an odd number of primes, this space of cuspforms is naturally identified with $\CS_{N^+,N^-}\otimes\BC$).

\subsection{Heegner points}\label{Heegner pts}

In this section we follow \cite[\S2.2-2.3]{Z13}, where more details can be found. We
assume in this section that $N=N^+N^-$ is a permissible factorization.

Suppose $N^-$ is a product of an even number of primes. Let $A$ be an abelian variety quotient of $J(X_{N^+,N^-})$. 
For each integer $n\in\Lambda$ the theory of Heegner points yields Heegner points of level $n$ 
$$
x_{N^+,N^-}(n)\in X_{N^+,N^-}(K[n]) \ \ \text{and} \ \  
y_A(n)\in A(K[n])
$$
defined over the ring class field extension $K[n]/K$  of conductor $n$. 
When $N^-\neq 1$, there is a choice of an auxiliary prime $\ell_0\nmid NDpn$ that intervenes in the 
definition of $y_A(n)$ (which is the image of a point in $J(X_{N^+,N^-})$ that is determined by $x_{N^+,N^-}(n)$ and $\ell_0$). 
The point $x_{N^+,N^-}(n)$ can be described in terms of the complex parameterization: $x_{N^+,N^-}(n)$ is the
double coset $[h_0\times h]$ represented by $h_0\times h \in \fkh^\pm\times\wh B^\times$, where
$h_0$ is the unique fixed point of the action of $K^\times$ on $\fkh=\fkh^+$ for
$K\hookrightarrow B$ an (optimal) embedding such that $K\cap R = \sO_K$ and
$h=(h_\ell)$ with $h_\ell = \diag(\ell,1)$ if $\ell|n$ and $h_\ell =1$ otherwise.

Suppose $N^-$ has an odd number of factors. There is also a Heegner point
$x_{N^+,N^-}(n) = [h]$ in the Shimura set $X_{N^+,N^-}$, which is represented by $h\in\wh B^\times$ as above. 

\subsection{The points $y_{A,K}$ and $x_{N^+,N^-,K}$}
If $N^-$ is a product of an even number of primes and $A$ is an abelian variety quotient of $J(X_{N^+,N^-})$, then we let
$$
y_{A,K} = \tr_{K[1]/K}( y_A(1) )= \sum_{\sigma\in \Gal(K[1]/K)} \sigma (y_A(1)) \in A(K).
$$
If $N^-$ is a product of an odd number of primes, then we let
$$
x_{N^+,N^-,K} = \tr_{K[1]/K}(x_{N^+,N^-}(1)) = \sum_{\sigma\in\Gal(K[1]/K)} \sigma(x_{N^+,N^-}(1)) \in \BZ[X_{N^+,N^-}].
$$
For this last, the action of $\Gal(K[1]/K)$ on $X_{N^+,N^-}$ is via the reciprocity law:
$$
\mathrm{rec}: \Gal(K[1]/K)\isoarrow K^\times\backslash \widehat K^\times/\widehat \sO_{K}^\times,
$$ 
and
$$
\sigma([h]) = [\mathrm{rec}(\sigma)h].
$$

\subsection{Reduction of $X_{N^+,N^-}$ at $p$ when $p\mid\mid N^+$}\label{red at p}
Suppose $N^-$ is a product of an even number of primes and $p\mid\mid N^+$. 
We recall some properties of the reduction of the Shimura curves $X_{N^+,N^-}$ at the prime $p$. 

As explained in \cite[\S10]{He}, $X_{N^+,N^-}$ has a regular model over $\BZ_{(p)}$ that is 
a course moduli space for false elliptic curves with level structure. This model is smooth away from the supersingular points
on the special fibre, and the special fibre can be identified with two copies of the Shimura curve for the Eichler order of level $N^+/p$
that are glued transversely at the supersingular points. The completion of the strict Henselization of the local ring of $X_{N^+,N^-}$ 
at a supersingular point is isomorphic to  $W(\ov\BF_p)[\![x,y]\!]/(xy-p)$.  
The supersingular points $X_{N^+,N^-}(\ov\BF_p)^{ss}$ are all defined over $\BF_{p^2}$ and can be naturally identified with the Shimura set 
$X_{N^+/p,pN^-}$ for the definite
quaternion algebra of discriminant $\infty p N^-$ and an Eichler order of level $N^+/p$ (cf.~\cite[\S5,6]{Milne1979}).

The N\'eron model of the Jacobian $J(X_{N^+,N^-})/\BQp$ has semistable reduction: the connected component of the 
special fibre containing the identity element is the extension of an abelian variety (the product of two copies of the 
reduction of $J(X_{N^+/p,N^-})$) by a torus. Let $\CX_{N^+,N^-}$ be the character group of this torus. Then $\CX_{N^+,N^-}=H_1(\CG,\BZ)$, 
where $\CG$ is the dual graph of the special fibre of $X_{N^+,N^-}$. 
This all follows from the existence of the model described above and \cite[Prop.~9]{MR}.
The set of vertices of $\CG$ is just the set of irreducible components
of $X_{N^+,N^-}/\BF_p$ (so there are two vertices), the edges of $\CG$ connecting two vertices are just the set of singular points in the intersection
of the two components (so in this case the edges are just $X_{N^+,N^-}(\ov\BF_p)^{ss} = X_{N^+/p,N^-p}$), and 
$$
H_1(\CG,\BZ) = \CS_{N^+/p,N^-p}.
$$

The maps defining the correspondences giving the action of the Hecke operators can be described in 
terms of the moduli problem underlying the model of $X_{N^+,N^-}$, and so  
define Hecke operators 
on $\CX_{N^+,N^-}$. 
In particular, there is an action of
$\BT_{N^+,N^-}$ on $\CX_{N^+,N^-}$ that is compatible with the Hecke actions on $\CS_{N^+/p,N^-p}$;
this action goes via the projection $\BT_{N^+,N^-} \twoheadrightarrow \BT_{N^+/p,N^-p}$ onto the $p$-new Hecke algebra.

\subsection{Reduction of $X_{N^+,N^-}$ at a prime $q\neq p$}
Suppose $N^-$ is a product of an even number of primes.
Let $q$ be a prime such that $q\nmid N^+$. 
The reduction of $X_{N^+,N^-}$ at $q$ is described in \cite[\S2.4]{Z13}. We recall some of this here.

If $q\nmid N$, then $X_{N^+,N^-}$ has a smooth model over $\BZ_{(q)}$. The supersingular points on the special fibre are defined over
$\BF_{q^2}$ and $X_{N^+,N^-}(\BF_{q^2})^{ss}$ is naturally identified with the Shimura set $X_{N^+,N^-q}$.  

If $q\mid N^-$, then $X_{N^+,N^-}$ has a minimal regular model over $\BZ_{(q)}$ that comes equipped with a $q$-adic (Cerednik-Drinfeld) uniformization (cf.~\cite[\S4]{R90}). The special fibre is a union of smooth curves intersecting transversely at the singular points.  
The set $\sV(X_{N^+,N^-})$ of irreducible components of the special fibre is identified with two copies of the Shimura set $X_{N^+,N^-/q}$:
$$
\sV(X_{N^+,N^-}) = X_{N^+,N^-/q}\times \BZ/2\BZ.
$$ 
Let $\sV(X_{N^+,N^-})_0 = X_{N^+,N^-/q}\times\{0\} \subset \sV(X_{N^+,N^-})$.

\subsection{Reduction modulo $q$ of Heegner points}\label{Heegner red}
Let $q$ be a prime that is inert in $K$. Again we suppose $N=N^+N^-$ is a permissible factorization and also that 
$N^-$ is a product of an even number of primes. We recall some facts about 
the reduction modulo $q$ of the Heegner points $x_{N^+,N^-}(n)$ from \ref{Heegner pts}. More details can be found in \cite[\S2.5]{Z13}.

If $q\nmid N$, then, since $q$ is inert in $K$, the reduction $\Red_q(x_{N^+,N^-}(n))$ of each Heegner point $x_{N^+,N^-}(n)$ is a supersingular point in the special fibre.  
If $x_{N^+,N^-}(n) = [h_0\times h]$ as in \ref{Heegner pts}, then $\Red_q(x_{N^+,N^-}(n))$ is the point $[h]\in X_{N^+,N^-q} = X_{N^+,N^-}(\ov\BF_q)^{ss}$.

If $q\mid N^-$, then the reduction modulo $q$ of $x_{N^+,N^-}(n)$ lies on a component $\Sp_q(x_{N^+,N^-}(n))\in \sV(X_{N^+,N^-})_0$.
If $x_{N^+,N^-}(n) = [h_0\times h]$, then $\Sp_q(x_{N^+,N^-}(n)) = [h]\times 0 \in X_{N^+,N^-/q}\times\{0\} = \sV(X_{N^+,N^-})_0$.

These facts are essentially summarized in the following lemma, which recalls Theorem \cite[Thm.~2.1]{Z13}. 

\begin{lem}$(\cite[Thm.~2.1]{Z13})$ Suppose $N^-$ is a product of an even number of primes. Let $q$ be a prime that is inert in $K$. 
\begin{itemize}
\item[\rm (i)] If $q\nmid N$, then $\Red_q(x_{N^+,N^-}(n)) = x_{N^+,N^-q}(n) \in X_{N^+,N^-q}$.
\item[\rm (ii)] If $q\mid N^-$, then $\Sp_q(x_{N^+,N^-}(n)) = x_{N^+,N^-/q}(n)\in X_{N^+,N^-/q}$.
\end{itemize}
\end{lem}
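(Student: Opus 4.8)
The statement is \cite[Thm.~2.1]{Z13}, so I will only indicate how one proceeds. In both cases the plan is to translate the Heegner point $x_{N^+,N^-}(n)=[h_0\times h]$ — with $h_0\in\fkh$ the point fixed by $\iota(K^\times)$ for an optimal embedding $\iota\colon K\hookrightarrow B$ satisfying $\iota(K)\cap R=\sO_K$, and $h=(h_\ell)$ having $h_\ell=\diag(\ell,1)$ for $\ell\mid n$ and $h_\ell=1$ otherwise — into the language of the integral model of $X_{N^+,N^-}$ recalled above, to compute its reduction there, and then to recognize the outcome as a Heegner point in the relevant Shimura set through the adelic description. Since $(n,q)=1$, the prime-to-$q$ datum $h$ (in particular the matrices $\diag(\ell,1)$, $\ell\mid n$) is carried along unchanged by every operation below, so the only real work is at the place $q$.

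For (i), $q\nmid N$, so $X_{N^+,N^-}$ has a smooth model over $\BZ_{(q)}$ parametrizing false elliptic curves (QM abelian surfaces) with $\Gamma_0(N^+)$-structure, and $x_{N^+,N^-}(n)$ corresponds to such an object with optimal $\sO_K$-multiplication together with the level datum $h$. First I would use that $q$ is inert in $K$, so the QM analogue of Deuring's reduction theorem (cf.~\cite{Milne1979}) forces the reduction to be \emph{supersingular}; its endomorphism ring becomes an Eichler order of level $N^+$ in the quaternion algebra of reduced discriminant $N^- q\infty$, into which the order $\sO_K\otimes\BZ_q$ — the ring of integers of the unramified quadratic extension $\BQ_{q^2}$ — embeds optimally. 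Transporting this datum across the identification $X_{N^+,N^-}(\ov\BF_q)^{ss}\cong X_{N^+,N^-q}$ gives $\Red_q(x_{N^+,N^-}(n))=[h]$, and $[h]$ is exactly $x_{N^+,N^-q}(n)$ by the definition of Heegner points in that Shimura set. The only thing to check is that there is no level at $q$ to follow and that the prime-to-$q$ identification is the identity; both are immediate since $(n,q)=1$ and $X_{N^+,N^-q}$ carries maximal level at $q$.

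For (ii), $q\mid N^-$, I would invoke the Cerednik--Drinfeld uniformization (cf.~\cite[\S4]{R90}): over $\BC_q$ one has $X_{N^+,N^-}\cong B'^\times\backslash(\Omega\times\wh{B'}^\times/\wh{R'}^\times)$, where $B'$ is obtained from $B$ by interchanging the invariants at $q$ and $\infty$ — so $B'$ is the definite quaternion algebra of discriminant $(N^-/q)\infty$ and $R'\subset B'$ an Eichler order of level $N^+$ — and $\Omega$ is Drinfeld's $q$-adic upper half plane. The dual graph of the special fibre is the quotient by $B'^\times$ at level $\wh{R'}^\times$ of the (bipartite) Bruhat--Tits tree of $\PGL_2(\BQ_q)$, whose vertices, read with their two types, give $\sV(X_{N^+,N^-})=X_{N^+,N^-/q}\times\BZ/2\BZ$. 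Under the uniformization $x_{N^+,N^-}(n)$ becomes $[z_0\times h]$, where $z_0\in\Omega$ is the unique point fixed by $K_q^\times=\BQ_{q^2}^\times$; uniqueness holds precisely because $q$ is inert, so $K_q/\BQ_q$ is the unramified quadratic extension and the associated torus fixes a single vertex $v_0$ of the tree. Then $z_0$ reduces to a smooth point of the component attached to $v_0$, which by the parity convention used to single out $\sV_0$ lies in the $0$-part; hence $\Sp_q(x_{N^+,N^-}(n))=[h]\times 0$, and this is $x_{N^+,N^-/q}(n)=[h]$ once read inside $\sV(X_{N^+,N^-})_0=X_{N^+,N^-/q}\times\{0\}$.

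The genuine content is the bookkeeping: matching the quaternion algebras, their Eichler orders, and the (optimal) embeddings under ``adjoin $q$ to the definite part'' in (i) and ``remove $q$ from the indefinite part'' in (ii), and pinning down which of the two vertex types occurs in (ii). The two arithmetic inputs — Deuring's theorem in (i) and the uniqueness of the torus-fixed point on the Drinfeld upper half plane for an inert $q$ in (ii) — are standard. I expect the parity/orientation determination in (ii) to be the most delicate point; everything else is a direct translation, carried out in \cite{Z13} and the references cited there.
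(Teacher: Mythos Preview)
Your sketch is correct and outlines the standard approach—Deuring-type reduction for (i), Cerednik--Drinfeld uniformization and the Bruhat--Tits tree for (ii)—which is presumably what appears in \cite{Z13} and its references. The paper itself does not reprove the lemma: it recalls the adelic descriptions of $\Red_q$ and $\Sp_q$ in the paragraphs preceding the statement, cites \cite[Thm.~2.1]{Z13}, and adds only the one-line observation that since $q\neq p$ and the lemma concerns reduction modulo $q$, nothing in the argument is affected by allowing $p\mid N$. So your write-up supplies considerably more detail than the paper does, but there is no divergence in method—you are filling in what the paper delegates to the citation.
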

\noindent Since $q\neq p$ and this lemma is only about reductions modulo $q$, it also holds when $p\mid N$.

\section{Kolyvagin's Conjecture}

We recall Kolyvagin's Conjecture for a newform.
Let $g$ be a newform of weight 2, level $N$, and trivial nebentypus, and let $\fkp\subset \sO$ be a prime as in \ref{newform-gal}.
Suppose $N=N^+N^-$ is a permissible factorization with $N^-$ a product of an even number of primes.

\subsection{Optimal quotients}\label{optimal qts}
Let $I\subset \BT_{N^+,N^-}$ be the kernel of the homomorphism $\pi:\BT_{N^+,N^-}\twoheadrightarrow \sO_{0}$ sending 
$T_\ell$ or $U_\ell$ to $a(\ell)$.
Let $A_{0} = J(X_{N^+,N^-})/IJ(X_{N^+,N^-})$. Then $A_{0}$ together
with the projection map $J(X_{N^+,N^-})\twoheadrightarrow A_{0}$ is the optimal quotient associated with $g$ (in particular, the kernel of
the projection is connected). 
There is clearly an induced embedding $\sO_{0}\hookrightarrow \End_\BQ A_{0}$,
but this does not necessarily extend to an action of $\sO$.

We can and do assume that $A$ is chosen so that there is a quotient map $J(X_{N^+,N^-})\twoheadrightarrow A$ that factors as the composition
of the optimal quotient with an isogeny $A_{0}\rightarrow A$ and that the corresponding image of $\Ta_p J(X_{N^+,N^-})$ in $\Ta_p A$ is not
contained in $\fkp\Ta_pA$.  Then $A$ together with the projection $J(X_{N^+,N^-})\twoheadrightarrow A$ is an 
optimal $(\sO,\fkp)$-quotient in the sense of \cite[\S2.7]{Z13}.

\subsection{The Conjecture}\label{Koly Conj}
As explained in \cite[\S2.7]{Z13}, by applying Kolyvagin's derivative operators to the points $y(n) = y_A(n)\in A(K[n])$ from \ref{Heegner pts}, 
for each $n\in\Lambda$ and each non-negative integer $0\leq  M\leq M(n)$ one obtains cohomology classes 
$$
c_M(n) \in H^1(K,A_{M}) , \ \ \ A_{M} = \Ta_pA\otimes_{\sO}\sO/\fkp^M \cong A[\fkp^M].
$$
Furthermore, letting $\eps_n = \eps\cdot(-1)^{\nu(n)} \in \{\pm 1\}$, where $\eps\in\{\pm 1\}$ is the root number of $g$, we have 
$$
c_M(n) \in H^1(K,A_{M})^{\eps_n},
$$
where the superscript denotes the subspace where the non-trivial automorphism of $K$ acts as multiplication by $\eps_n$.

Let 
$$
\sM(n) = \max\{M\geq 0 \ : \  c_M(n) \in \fkp^{M'}H^1(K,A_{M'}) \ \forall M'\leq M\}
$$ 
and
$$
\sM_r = \min\{\sM(n) \ : \ \text{$n\in \Lambda$ has exactly $r$ prime factors}\}.
$$
We allow $\infty$ as a value for $\sM(n)$ and $\sM_r$.
Kolyvagin showed that $\sM_r\geq \sM_{r+1}\geq 0$. Let
$$
\sM_\infty(g) = \min\{\sM_r \ : \ r\geq 0\}.
$$

\begin{conjecture}$(\text{Kolyvagin's Conjecture})$
Assume that Hypothesis $\club$ holds for $\ov\rho$.
Then the collection of cohomology classes
$$
\kappa^\infty = \{c_M(n) \ : \ n\in\Lambda, M\leq M(n)\}
$$
is non-zero. Equivalently, $\sM_\infty(g) <\infty$.
\end{conjecture}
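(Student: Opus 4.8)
The plan is to carry over the argument of \cite[Thm.~9.1]{Z13} more or less verbatim, once the ingredients listed in the Introduction have been supplied in the multiplicative setting: a version of Ihara's lemma for Shimura curves when $\ov\rho$ is globally irreducible but reducible and non-finite at $p$ (\ref{Ihara}), the level-raising result (\ref{level-raise}), the multiplicity-one statements (\ref{multone}), the local behaviour of the Kolyvagin classes at the primes above $p$ (\ref{loc at p}), the cohomological congruence (\ref{coh cong}), the rank-zero BSD formula for the relevant congruent newforms (\ref{BSD0}), and the Ribet--Takahashi/Pollack--Weston comparison of congruence numbers and Tamagawa numbers (\ref{thm RT}). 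The skeleton is an induction on the corank $r$ of the Selmer group of $\ov\rho$ over $K$.

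First I would dispose of the base case $r=1$ (Hypothesis $\heart$ forces $r\geq 1$). Here the basic Heegner class $c_0(1)$, which up to the fixed identifications and an $\sO_\fkp^\times$-multiple is the image in $H^1(K,A[\fkp])$ of $y_{A,K}$, is nonzero: this is Kolyvagin's original rank-one argument combined with the Gross--Zagier formula and the $p$-part of the BSD formula in analytic rank one, the one delicate point being the comparison of periods and special-value formulae entering that statement, which is what \ref{rank one} records. Thus $\sM_\infty(g)=0<\infty$ in this case.

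For the inductive step, assume $r\geq 2$. Following Zhang --- who follows Bertolini--Darmon and Howard's formalism of bipartite Euler systems --- one selects a prime $\ell$, inert in $K$ and admissible for $(g,\fkp)$, whose localization map detects a nontrivial class in the Selmer group; by the level-raising result (\ref{level-raise}) there is a newform $g'\equiv g\pmod{\fkp}$ of level $N\ell$ with $\ell\mid N^-$ (so $N^-$ picks up a prime and one alternates between a Shimura curve and a Shimura set) whose Selmer corank is $r-1$. By the inductive hypothesis Kolyvagin's conjecture holds for $g'$: there is $n'\in\Lambda$ with $\nu(n')=r-2$ and $c^{g'}(n',1)\neq 0$. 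The cohomological congruence (\ref{coh cong}) --- which rests on the new Ihara lemma (\ref{Ihara}), the multiplicity-one results (\ref{multone}) and the description in \ref{Heegner red} of the reduction of Heegner points at $\ell$ --- then identifies, up to a unit, the singular part at $\ell$ of $c^g(n'\ell,1)$ with a multiplicity-one image of $c^{g'}(n',1)$, so $c^g(n'\ell,1)\neq 0$ with $\nu(n'\ell)=r-1$; hence $\kappa\neq\{0\}$ and $\sM_\infty(g)<\infty$. Theorems \ref{thm E/Q} and \ref{thm E BSD} then follow, via the known equivalences with the Selmer-corank and BSD statements, exactly as in \cite{Z13}.

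The genuinely new obstacle, relative to \cite{Z13}, will be making the inputs at $p$ survive multiplicative reduction. The local condition at $p$ for the Kolyvagin classes turns out to be comparatively transparent precisely because $\ov\rho|_{G_\BQp}$ is reducible but not finite at $p$ (Lemma \ref{ord res lem}, Corollary \ref{finite at p}), which is \ref{loc at p}. The harder point is that the cohomological congruence feeds on the rank-zero BSD formula for the level-raised newforms, and in the split multiplicative case $a(p)=1$ this requires their $\fkL$-invariants to be nonzero, which for a general newform (unlike for an elliptic curve) is not known; this is the role of Hypothesis $\fkL$, since by Lemma \ref{Linv lem} the condition $\ov\psi_\cyc(\ov c)\neq 0$ both forces $\fkL(\CV)\neq 0$ and, depending only on the residual representation $V$, propagates to every newform congruent to $g$ modulo $\fkp$ --- so the rank-zero BSD input stays available all along the induction. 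Verifying this propagation, and establishing Ihara's lemma for Shimura curves in the non-finite-at-$p$ case, is where I expect the real work to lie.
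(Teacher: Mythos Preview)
The statement you are attempting to prove is a \emph{conjecture}; the paper does not prove it as stated. What the paper proves is the special case recorded as Theorem~\ref{Kconj thm}, under the additional hypotheses that $p\mid\mid N$, Hypothesis~$\heart$ holds for $(g,\fkp,K)$ with $N^-$ a product of an even number of primes, $\ov\rho$ is not finite at $p$, and Hypothesis~$\fkL$ holds when $a(p)=1$. Your proposal implicitly works under these extra hypotheses (you invoke Hypothesis~$\heart$, the non-finiteness at $p$, etc.), so at best it is a sketch of Theorem~\ref{Kconj thm}, not of the conjecture.

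Even as a sketch of Theorem~\ref{Kconj thm}, two steps are wrong. First, the base case: the rank-one case is \emph{not} handled by Gross--Zagier and a rank-one BSD formula. In the paper (Theorem~\ref{rank one thm}) one chooses a single admissible prime $q$ so that $\loc_q$ is surjective on $\Sel_\fkp(A/K)$, level-raises to $g'$ of level $Nq$ with $\dim\Sel_{\fkp'}(A'/K)=0$, applies the \emph{rank-zero} special-value formula (Theorem~\ref{period Sel thm}, which needs the $\fkL$-invariant input) to get $\phi_{g'}(x_K)\not\equiv 0$, and then invokes the Jochnowitz congruence (Proposition~\ref{Jochno cong}) to conclude $\loc_q c(1)\neq 0$. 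No rank-one BSD formula enters.

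Second, the inductive step as you describe it does not run. If you level-raise by a \emph{single} admissible prime $\ell$, the new $N^-$ has an odd number of prime factors, so there is no Shimura curve, no Heegner points, and no Kolyvagin system $c^{g'}(n',1)$ for $g'$: the sentence ``By the inductive hypothesis Kolyvagin's conjecture holds for $g'$'' is meaningless in that parity. The induction in \cite{Z13} (and here) stays entirely within $m\in\Lambda^{',+}$: one moves by \emph{pairs} of admissible primes $q_1,q_2$, using Proposition~\ref{rank lower} twice to drop the Selmer rank by two, and the cohomological congruence one actually has is Theorem~\ref{coh cong thm}, which compares $\loc_{q_1}c(n,m)$ with $\loc_{q_2}c(n,mq_1q_2)$. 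Linking this to nonvanishing of some $c(n,1)$ also requires the Kolyvagin-prime localization input (Lemma~\ref{indep classes}), which your sketch omits.
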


\noindent  In \cite{Z13} this conjecture was proved under certain hypotheses on $g$ and $\ov\rho$, including $p\nmid N$.
In this paper we extend this result to certain $g$ and $\ov\rho$ with $p\mid\mid N$ (see Theorem \ref{Kconj thm}).

\section{Level-raising of modular forms}

This section contains the bulk of the new results\footnote{The main results of \cite{Smult} are also needed, replacing the references
to \cite{SU} in \cite[\S7]{Z13}. This is explained in \S\ref{BSD0} below.}
required to extend the methods of \cite{Z13} to certain cases where $p\mid\mid N$. In particular, we prove a simple version of Ihara's lemma for 
Shimura curves that does not seem to be contained in the current literature, as well as a level-raising result\footnote{In addition to being used in this paper,
this permits the arguments in \cite{He} to be extended to more maximal ideals, including those corresponding to the $\ov\rho$
considered herein.}, and multiplicity one results for certain Hecke modules.

\subsection{Ihara's lemma}\label{Ihara} 
Let $N$ be a positive integer and $N=N^+N^-$ a factorization such that $N^+$ and $N^-$ are coprime, $p\mid\mid N^+$, and $N^-$ is a square-free 
product of an even number of primes. This need not be a permissible factorization with respect to $K$.

Let $\fkm\subset \BT = \BT_0(N^+,N^-)$ be a maximal ideal and let $k_\fkm = \BT/\fkm$. Associated with $\fkm$ is a 
semisimple two-dimensional $k_\fkm$-representation of $G_\BQ$
$$
\ov\rho_\fkm: G_\BQ\rightarrow \Aut_{k_\fkm} V_\fkm
$$
that is unramified at the primes $\ell\nmid N$ (since $p|N$) and, for such primes $\ell$, satisfies 
$\Trace\ov\rho_\fkm(\Frob_\ell) = T_\ell \pmod{\fkm}$.
For example, if $\fkm$ is the kernel of the reduction modulo $\fkp$ of the map $\BT\rightarrow \sO_{\fkp}$
associated with a newform $g$ as in \ref{newform-gal}, then $V_\fkm \cong V_{0}$.

Let $q\nmid N$ be a prime. Let $\BT_1 = \BT = \BT_0(N^+,N^-)$ and let $\BT_2 = \BT_0(N^+q,N^-)$.
Let $\BT_i^{\{q\}}\subset \BT_i$ be the subalgebra\footnote{
The argument used to prove claim 1 in the proof of the lemma on p.~491 of \cite{WilesFLT} shows that $\BT_1^{\{q\}} = \BT_1$.}
generated by omitting the Hecke operator $T_q$ or $U_q$.  There is a surjective homomorphism 
$\BT_2^{\{q\}} \rightarrow \BT_1^{\{q\}}$ that sends $T_\ell$ and $U_\ell$, respectively, to $T_\ell$ and $U_\ell$ for all $\ell\neq q$.
Let $\fkm_q = \fkm \cap \BT_1^{\{q\}}$. We also write $\fkm_q$ for the maximal ideal of $\BT_2^{\{q\}}$ that is the preimage
of $\fkm_q$.

In the following we prove two versions of Ihara's lemma, one each for a definite and an indefinite case, 
with the former used to prove the latter. These are straightforward, but the indefinite version fills part of an apparent gap in the 
current literature on Ihara's lemma for Shimura curves.  

\paragraph{The definite case.}  
Let  $\CS_1
= \CS_{N^+/p,N^-p}\otimes\BQp/\BZp$.
As recalled in \ref{red at p}, this has an action of $\BT_1 = \BT_0(N^+,N^-)$ through its $p$-new quotient $\BT_0(N^+/p,N^-p)$.
Similarly, let $\CS_2 = \CS_{N^+q/p,N^-p}\otimes\BQp/\BZp$; this has an action of $\BT_2$.

There are two degeneracy maps $\alpha,\beta: X_{N^+q/p,N^-p}\rightarrow X_{N^+/p,N^-p}$ given, respectively, by 
$g\mapsto g$ and $g\mapsto gd_q^{-1}$, $d_q = \diag(q,1)\in \GL_2(\BQ_q)$.
These induce homomorphisms $\alpha^*,\beta^*:\CS_1\rightarrow \CS_2$ that commute with the actions
of $\BT_1^{\{q\}}$ and $\BT_2^{\{q\}}$. 

\begin{lem}\label{Ihara-def} Suppose $\ov\rho_\fkm$ is irreducible. Suppose also that $\fkm$ is new at $p$ in the sense
that it is the preimage of a maximal ideal of the $p$-new Hecke ring $\BT_0(N^+/p,N^-p)$.
The homomorphism
$$
\CS_1[\fkm_q^\infty]\times \CS_1[\fkm_q^\infty]\stackrel{\alpha^*+\beta^*}{\longrightarrow} \CS_2[\fkm_q^\infty]
$$
is an injection.
\end{lem}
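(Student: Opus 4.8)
The plan is to reduce the statement to the classical Ihara lemma for the definite quaternion algebra $B_{N^-p\infty}$ of level $N^+/p$, i.e.\ for the Shimura set $X_{N^+/p,N^-p}$, by a purely Hecke-theoretic argument that isolates the maximal ideal $\fkm_q$. First I would observe that the kernel $\ker(\alpha^*+\beta^*)$ is naturally a module over $\BT_1^{\{q\}}$, so it suffices to show that after localizing at $\fkm_q$ this kernel vanishes. The content of Ihara's lemma in the definite setting is precisely that the kernel of $\alpha^*+\beta^*$, before any localization, is "Eisenstein" or at least is built out of one-dimensional (character) constituents of the relevant automorphic representations: concretely, an element of the kernel gives rise, via strong approximation for $B^\times_{N^-p\infty}$ over $\BQ$, to a function on the Shimura set whose associated automorphic representation has a local component at $q$ that is one-dimensional (a twist of the trivial or Steinberg representation), because the vanishing of $\alpha^*f + \beta^*f$ says exactly that $f$ is fixed by a larger level structure at $q$ in a way incompatible with $f$ generating a supercuspidal or principal series with nontrivial $U_q$-action. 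The key point is that such one-dimensional constituents contribute only to Eisenstein maximal ideals, so their contribution dies after localizing at $\fkm_q$ once we know $\ov\rho_\fkm$ is irreducible.

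The steps, in order, would be: (1) Write $X_{N^+q/p,N^-p}$ and $X_{N^+/p,N^-p}$ adelically and identify $\alpha^*,\beta^*$ with the two natural inclusions coming from the two cosets of $\Gamma_0(q)\subset \GL_2(\BZ_q)$ inside $\GL_2(\BZ_q)$; the cokernel/kernel of $\alpha^*+\beta^*$ is then governed by the local representation theory of $\GL_2(\BQ_q)$, exactly as in the modular curve case (here one uses the fixed identification $\wh B^{N^-p} = M_2(\BA_f^{N^-p})$ from the Shimura-set setup, which is legitimate since $q\nmid N^-p$). (2) Decompose $\CS_1\otimes\BQ$ under the Hecke action into Jacquet--Langlands transfers of weight-$2$ newforms of level dividing $N^+q/p \cdot N^-p$; a class in the kernel of $\alpha^*+\beta^*$ projects to zero in every component whose local component at $q$ is infinite-dimensional (generic), by the standard local computation, so it is supported on components coming from characters at $q$. (3) Show those character components are Eisenstein: the associated $\ov\rho_\fkm$ would be reducible (a sum of characters), contradicting the hypothesis that $\ov\rho_\fkm$ is irreducible; hence after localizing at $\fkm_q$ the kernel is zero. (4) Pass from the rational statement to the torsion statement $\CS_i[\fkm_q^\infty]$: since $\BQp/\BZp$ is divisible and $\BT_1^{\{q\}}$ acts, and $\CS_1[\fkm_q^\infty]$ is the $\fkm_q$-divisible part, one uses that $\fkm_q$-localization is exact and that the integral lattices $\CS_{N^+/p,N^-p}$, $\CS_{N^+q/p,N^-p}$ are $p$-torsion-free $\BZp$-modules on which $\alpha^*+\beta^*$ becomes injective after $\otimes\BQp$ and localizing at $\fkm_q$; then injectivity on the $\fkm_q^\infty$-torsion of the divisible modules follows by a snake-lemma / Pontryagin-duality argument, using that the cokernel of the integral map is finite away from $\fkm_q$ (or by dualizing to the statement that $(\alpha^*+\beta^*)^\vee$ is surjective on $\fkm_q$-completions, which is the usual formulation of Ihara's lemma and is where the "new at $p$" hypothesis is used to match up the relevant Hecke algebras).

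The main obstacle is step (3)–(4), i.e.\ genuinely establishing that the only obstruction to injectivity is Eisenstein and then transporting this across the $\otimes\BQp/\BZp$. In the modular-curve case this is Ihara's original lemma plus the argument of Wiles (and Ribet); for the definite quaternionic case the required input is the analogue due to Diamond--Taylor / Taylor (see also the treatment via completed homology), and the real work is checking that the local representation-theoretic dichotomy at $q$ — generic constituents do not lie in the kernel, non-generic ones are Eisenstein — goes through verbatim for $B^\times$ with our $\fkm$, together with the bookkeeping that $\fkm$ being new at $p$ makes $\BT_0(N^+/p,N^-p)_{\fkm_q}$ and the two ambient Hecke algebras $\BT_i^{\{q\}}$ localize compatibly. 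I expect the local computation to be routine (it only involves $\GL_2(\BQ_q)$ with $q\nmid Np$ away from the supersingular combinatorics), so the effort is concentrated in the passage to $p$-divisible coefficients and the identification of the Eisenstein locus, for which irreducibility of $\ov\rho_\fkm$ is exactly the hypothesis that kills the bad part.
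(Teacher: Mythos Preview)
Your high-level intuition --- that the kernel is Eisenstein and is therefore killed by localizing at $\fkm_q$ once $\ov\rho_\fkm$ is irreducible --- is exactly right, and is what the paper uses. But your proposed execution is both overcomplicated and, in one place, circular.

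The problem is in steps (2)--(4). In step (2) you propose to decompose $\CS_1\otimes\BQ$ into automorphic components and analyze the kernel of $\alpha^*+\beta^*$ there. But $\CS_1 = \CS_{N^+/p,N^-p}\otimes\BQ_p/\BZ_p$, so $\CS_1\otimes\BQ = 0$; and even if you meant $\CS_{N^+/p,N^-p}\otimes\BQ$, the map $\alpha^*+\beta^*$ is already injective there (the degree-zero divisors carry only generic local components at $q$, so your own local dichotomy gives a zero kernel). Thus steps (2)--(3) are vacuous: there is nothing Eisenstein to detect in characteristic zero. The entire content of the lemma is a torsion statement, which you push to step (4). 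There you propose either to use that ``the cokernel of the integral map is finite away from $\fkm_q$'' --- but that is equivalent (by duality) to the statement you are trying to prove --- or to invoke ``the usual formulation of Ihara's lemma,'' which is again the statement at hand. So the argument as written is circular.

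The paper's proof avoids all of this by working directly with the torsion module and applying the strong-approximation argument of \cite[Lem.~2]{DiamondTaylor} \emph{in that setting}. The point is that an element $(f_1,f_2)$ of the kernel is invariant under the congruence subgroup generated by the images of the two degeneracy embeddings at $q$; by strong approximation for the norm-one elements of $B^\times$ (the definite quaternion algebra), this subgroup is large enough to force the kernel to be annihilated by $T_\ell - 1 - \ell$ for every prime $\ell\equiv 1\pmod{Nq}$. If the kernel had any $\fkm_q$-torsion, all of these operators would lie in $\fkm_q$, giving $\Trace\ov\rho_\fkm(\Frob_\ell) \equiv 1+\ell$ for a density-one set of $\ell$, which forces $\ov\rho_\fkm$ to be reducible by Chebotarev and Brauer--Nesbitt. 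That is the whole proof, in two sentences, and it never passes through characteristic zero.
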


\begin{proof} The same argument used to prove \cite[Lem.~2]{DiamondTaylor} shows that the kernel is annihilated by $T_\ell  - 1 -\ell$
for all $\ell\equiv 1 \pmod{Nq}$. But it cannot be that $T_\ell-1-\ell\in \fkm_q$ for all such $\ell$, for then we would have 
$\Trace\ov\rho_\fkm(\Frob_\ell) = 1+\ell \pmod{\fkm}$ for all $\ell\equiv 1 \pmod{Nq}$, which would imply that $\ov\rho_{\fkm}$ is reducible.
\end{proof}

\paragraph{The indefinite case.}
There are two degeneracy maps $\alpha,\beta:X_{N^+q,N^-}\rightarrow X_{N^+,N^-}$ over $\BZ_{(p)}$.
In terms of the complex parameterizations, these correspond to $\tau\mapsto \tau$ and 
$\tau\mapsto q\tau$, respectively.
These maps induce the above similarly-denoted degeneracy maps on the supersingular points of the special fibres upon fixing
compatible identifications of the sets of supersingular points with the Shimura sets $X_{N^+q/p,N^-p}$ and $X_{N^+/p,N^-p}$ as before.
Let $J_1 = J(X_{N^+,N^-})$ and $J_2 = J(X_{N^+q, N^-})$. 
The maps $\alpha$, $\beta$ induce homomorphisms $\alpha^*,\beta^*:J_1\rightarrow J_2$ by 
Picard functoriality. These maps
are compatible with the actions of $\BT_1^{\{q\}}$ and $\BT_2^{\{q\}}$. 

The version of Ihara's Lemma that we will need for the indefinite case is:

\begin{lem}\label{Ihara with p} If
\begin{itemize}
\item[\rm (a)] $\ov\rho_{\fkm}$ is irreducible, and
\item[\rm (b)] $\ov\rho_{\fkm}$ is not finite at $p$,
\end{itemize}
then the morphism
$$
J_1[\fkm_q^\infty]\times J_1[\fkm_q^\infty]\stackrel{\alpha^* + \beta^*}{\longrightarrow} J_2[\fkm_q^\infty],
$$
is injective.
\end{lem}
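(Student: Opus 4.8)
The plan is to deduce the indefinite version of Ihara's lemma (Lemma~\ref{Ihara with p}) from the definite version (Lemma~\ref{Ihara-def}) by using the description of the reduction of $X_{N^+,N^-}$ at $p$ recalled in \S\ref{red at p}. The key point is that since $p \mid\mid N^+$, the Jacobian $J_i$ has purely semistable (indeed toric) part at $p$ governed by the character group of the torus in the special fibre, and that character group is exactly a group of degree-zero divisors on the relevant Shimura set. So the monodromy/character-group exact sequence will translate the injectivity question for $J_1[\fkm_q^\infty]^2 \to J_2[\fkm_q^\infty]$ into a statement about the maps $\alpha^* + \beta^*$ on the finite Shimura-set modules $\CS_i$, where Lemma~\ref{Ihara-def} applies.

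First I would invoke the semistable reduction of $J_i/\BQ_p$ from \S\ref{red at p}: the connected component of the special fibre of the N\'eron model is an extension of an abelian variety by a torus with character group $\CX_i$, and $\CX_1 = \CS_{N^+/p,N^-p}$, $\CX_2 = \CS_{N^+q/p,N^-p}$ as $\BT_i^{\{q\}}$-modules. Passing to $p$-divisible groups (equivalently, to $\fkm_q^\infty$-torsion after localizing), the hypothesis (b) that $\ov\rho_\fkm$ is not finite at $p$ forces the relevant piece of $J_i[\fkm_q^\infty]$ to sit inside the toric part: more precisely, the filtration on $\Ta_p J_i$ coming from the uniformization gives, after reduction mod $\fkm_q$, that $V_\fkm$ is a non-split extension of the unramified quotient by a twist of it, and the $\fkm_q^\infty$-torsion of $J_i$ is computed from the rigid-analytic (Mumford) uniformization $0 \to \CX_i \to \mathrm{Hom}(\CX_i^\vee,\ov\BQ_p^\times) \to J_i(\ov\BQ_p) \to 0$. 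Under this uniformization the degeneracy maps $\alpha^*,\beta^*$ on $J_i$ are induced by the corresponding maps on character groups (this is the compatibility of Picard functoriality with the graph/character-group description, via \cite[Prop.~9]{MR} and the identification of $\CG$'s homology with $\CS_{N^+/p,N^-p}$). Therefore injectivity of $\alpha^* + \beta^*$ on $J_1[\fkm_q^\infty]^2$ reduces, using snake-lemma bookkeeping, to injectivity of $\alpha^* + \beta^*$ on $\CS_1[\fkm_q^\infty]^2$ (and on the dual side), and the non-finiteness hypothesis is precisely what kills the potential contribution of the abelian-variety part and of the "crystalline" direction in the extension, so that no kernel can hide there.

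At that point Lemma~\ref{Ihara-def} finishes the argument, since $\ov\rho_\fkm$ irreducible is hypothesis (a), and $\fkm$ being new at $p$ is automatic here: as $p\mid\mid N^+$ and $\ov\rho_\fkm$ is not finite at $p$, Corollary~\ref{finite at p} (applied to the abelian variety attached to $\fkm$, or rather the analogous statement for $\ov\rho_\fkm$) shows $\Phi[\fkm]$ vanishes, hence $\fkm$ is supported on the $p$-new quotient $\BT_0(N^+/p,N^-p)$; alternatively one sees directly that a non-finite-at-$p$ maximal ideal cannot be $p$-old. So the hypotheses of Lemma~\ref{Ihara-def} are met and we conclude $\CS_1[\fkm_q^\infty]^2 \to \CS_2[\fkm_q^\infty]$ is injective, whence the same for $J_1[\fkm_q^\infty]^2 \to J_2[\fkm_q^\infty]$.

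The main obstacle I anticipate is the careful translation between the Jacobian-level statement and the character-group-level statement, i.e.\ controlling exactly which part of $J_i[\fkm_q^\infty]$ is "seen" by $\CX_i$ and verifying that the non-finiteness at $p$ is enough to rule out any kernel coming from the abelian-variety quotient of the connected component or from the $\ur$/$\cyc$ splitting of the local extension in Lemma~\ref{ord res lem}. Concretely one must check: (i) the degeneracy maps really do intertwine with $\alpha^*,\beta^*$ on character groups compatibly with the Hecke actions away from $q$ (this follows from the moduli description of the correspondences in \S\ref{red at p}); (ii) a snake-lemma argument on the uniformization sequences shows $\ker(\alpha^*+\beta^*\ \mathrm{on}\ J_1[\fkm_q^\infty]^2)$ injects into $\ker(\alpha^*+\beta^*\ \mathrm{on}\ \CS_1[\fkm_q^\infty]^2)$ once the abelian-variety part is excluded; and (iii) the exclusion step, where one uses that $\ov\rho_\fkm|_{G_{\BQ_p}}$ is a non-finite (non-split, non-crystalline) extension so the $\fkm_q$-torsion of $J_i$ localized at $\fkm_q$ is purely toric. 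Everything else — the appeal to \cite{DiamondTaylor}-style annihilation by $T_\ell - 1 - \ell$ and the Chebotarev contradiction with irreducibility — is already packaged in Lemma~\ref{Ihara-def}.
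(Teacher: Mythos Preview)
Your overall architecture matches the paper's: filter $J_i[\fkm_q^\infty]$ using the semistable reduction at $p$, identify the top quotient with $\CS_i[\fkm_q^\infty]$, and invoke Lemma~\ref{Ihara-def} there; the observation that $\fkm$ is $p$-new because $\ov\rho_\fkm$ is not finite at $p$ is also correct and is how the paper begins.

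The gap is in your exclusion step. The exact sequence the paper uses is
\[
0 \longrightarrow J_i^f[\fkm_q^\infty] \longrightarrow J_i[\fkm_q^\infty] \longrightarrow \CS_i[\fkm_q^\infty] \longrightarrow 0,
\]
where $J_i^f$ is the maximal $p$-divisible subgroup of $J_i[p^\infty]$ extending to a $p$-divisible group over $\BZ_p$. The snake lemma plus Lemma~\ref{Ihara-def} shows only that the kernel of $\alpha^*+\beta^*$ on $J_1[\fkm_q^\infty]^2$ \emph{equals} the kernel on $J_1^f[\fkm_q^\infty]^2$ --- not that it injects into the kernel on $\CS_1^2$, as your (ii) claims. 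Your step (iii), that non-finiteness makes $J_i[\fkm_q^\infty]$ ``purely toric,'' is false: $J_i[\fkm_q]$ is a sum of copies of $V_\fkm$, and $V_\fkm$ genuinely straddles the toric sub and the \'etale quotient. (Relatedly, the Mumford-type uniformization you write down for $J_i(\ov\BQ_p)$ does not exist unless the abelian-variety part of the special fibre vanishes, which fails whenever $J(X_{N^+/p,N^-})\neq 0$.) Even granting that the abelian-variety part dies after localization at $\fkm_q$, you would still need injectivity of $\alpha^*+\beta^*$ on the toric part of $J_1^f[\fkm_q^\infty]^2$, which is a \emph{dual} Ihara statement you gesture at (``and on the dual side'') but do not establish.

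The paper avoids the dual statement entirely by a Galois-theoretic argument: if the kernel were nonzero, its $\fkm_q$-torsion is a nonzero $G_\BQ$-submodule of $J_1[\fkm_q]^2$, hence by hypothesis (a) and \cite{BosLenRib} contains a copy of $V_\fkm$. But the kernel lies in $J_1^f[\fkm_q^\infty]^2$, so this would force $V_\fkm \subset J_1^f[\fkm_q]$ and hence $V_\fkm$ finite at $p$, contradicting (b). This use of the global Galois action is the idea missing from your sketch.
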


\noindent Since $\BT_i$ acts on $J_i[p^n] = J_i(\ov\BQ)[p^n] = J_i(\ov\BQ_p)[p^n]$, the $\fkm_q^n$-torsion of $J_i$ is well-defined.

\begin{proof} 
We note that since $\ov\rho_\fkm$ is not finite at $p$ by (b), $\fkm$ is $p$-new: $\fkm$ is the preimage of a maximal ideal
of $\BT_0(N^+/p,N^-p)$. 

Let $J_i^f$ be the maximal $p$-divisible subgroup of $J_i/\BQp$ that extends to a $p$-divisible subgroup over $\BZp$,
and let $J_i^t\subset J_i^f$ be the maximal $p$-divisible subgroup that extends to the $p$-divisible subgroup of a torus over $\BZp$.
The character group of this torus is canonically identified with the character group $\CX_i$ 
of the toric part of the special fibre of the N\'eron model of $J_i$ over $\BZp$, even as $G_{\BQp}$-modules. 
As we have explained in \ref{red at p}, $\CX_i$ is identified with $\CS_{N^+_i/p,N^-p}$ (where $N^+_1 = N^+/p$ 
and $N_2^+ = N^+q/p$), even as Hecke modules.
Furthermore, the Weil-pairing $J_i[p^n]\times J_i[p^n]\rightarrow \mu_{p^n}$
induces an identification $J_i[p^n]/J_i^f[p^n] = \Hom_{\BZp\mathrm{-mod}}(J_i^t[p^n],\mu_{p^n}) = \CX_i/p^n \CX_i$ 
(as both Hecke and $G_\BQp$-modules),
so $J_i[p^\infty]/J_i^f[p^\infty] =  \CS_i$.  Therefore we have exact sequences 
$$
0\rightarrow J_i^f[\fkm_q^\infty] \rightarrow J_i[\fkm_q^\infty] \rightarrow \CS_i[\fkm_q^\infty]\rightarrow 0
$$
of $\BT_i^{\{q\}}$-modules.

The maps $\alpha^*$ and $\beta^*$ induce maps between the above exact sequences for $i=1,2$, 
and we have a commutative diagram:
$$
\begin{tikzcd}
0 \arrow{r}  & J_1^f[\fkm_q^\infty]\times J_1^f[\fkm_q^\infty] \arrow{r}\arrow{d}{\alpha^*+\beta^*} &  J_1[\fkm_q^\infty]\times
J_1[\fkm_q^\infty]\arrow{r}\arrow{d}{\alpha^*+\beta^*}
& \CS_1[\fkm_q^\infty]\times\CS_1[\fkm_q^\infty]\arrow{r}\arrow{d}{\alpha^*+\beta^*} & 0 \\
0 \arrow{r}  & J_2^f[\fkm_q^\infty] \arrow{r} &  J_{Nq,M}[\fkm_q^\infty]\arrow{r}
& \CS_2[\fkm_q^\infty]\arrow{r}& 0.
\end{tikzcd}
$$
By Lemma \ref{Ihara-def}, the right vertical map is an injection. So the kernel of the middle vertical map equals the kernel of the left vertical 
map. But if the kernel of the middle map is non-zero, then its $\fkm_q$-torsion, being a $G_\BQ$-stable submodule of
$J_1[\fkm_q]\times J_1[\fkm_q]$, must contain a submodule isomorphic to $V_\fkm$ (as $J_1[\fkm_q]$ is a sum of copies of $V_\fkm$ 
by (a) and \cite[Thms.~1 and 2]{BosLenRib}). But this would imply
that $V_\fkm$ is a $G_\BQ$-submodule of $J_{1}^f[\fkm_q]\times J_{1}^f[\fkm_q]$ and therefore $V_\fkm$ would be 
finite at $p$, contradicting (b).
\end{proof}

To be precise, the extension of the argument of Bertolini and Darmon (from \cite{BD-ACMC}) needed to extend
the proof of \cite[Thm.~4.3]{Z13} to the cases considered in this paper, depends on a version of Ihara's Lemma for Shimura curves with 
$\Gamma_1(p)$-structures, not just 
$\Gamma_0(p)$-structures. However, this is an easy consequence of the preceding lemma, as we now explain.

Let $X_{N^+,N^-}'$ be the Shimura curve over $\BQ$ defined by replacing $R$ with its
suborder $R'\subset R$ consisting of elements with reduction modulo $p$ lying in the subgroup of $\GL_2(\BZ/p\BZ)$ with upper
left entry congruent to $1$ modulo $p$. Let $\BT'=\BT_0'(N^+,N^-)$ be the $p$-adic completion of the Hecke algebra acting on the $N^-$-new subspace
of $S_2(\Gamma_1(p)\cap\Gamma_0(N))$; this includes the diamond operators $\langle d\rangle$ for $(d,N)=1$.
The map $J(X_{N^+,N^-})\rightarrow J(X_{N^+,N^-}')$
induced by Picard functoriality from the natural map $X_{N^+,N^-}'\rightarrow X_{N^+,N^-}$ ($\tau\mapsto \tau$ in terms of the 
complex uniformization) is compatible with the natural homomorphism $\BT'\rightarrow\BT$, which sends each $\langle d\rangle$ to $1$,
and the image of $J(X_{N^+,N^-})$ is just the kernel of the diamond operators. 
Let $\fkm_q'\subset \BT^{',\{q\}}$ be the preimage of $\fkm_q\subset \BT^{\{q\}}$.
The maximal ideal $\fkm'_q$ contains each $\langle d \rangle -1$. The composition 
$$
J(X_{N^+,N^-})\rightarrow J(X_{N^+,N^-}') \rightarrow J(X_{N^+,N^-}),
$$
where the second arrow comes from Albanese functoriality, 
is just multiplication by an integer prime to $p$
(see also the reduction from $J_1(Np)$ to $J_1(N,p)$ in the proof of \cite[Thm.~2.1]{WilesFLT} in the case where
$\Delta_{(p)}$ is trivial). It follows that the first arrow induces an 
isomorphism 
$$
J(X_{N^+,N^-})[\fkm_q] \isoarrow J(X_{N^+,N^-}')[\fkm_q'].
$$
Consequently:

\begin{cor}\label{Ihara gamma1}
The injectivity of the map in 
Lemma \ref{Ihara with p} also holds with $J_1$ and $J_2$ replaced with $J(X_{N^+,N^-}')$ and $J(X_{N^+q,N^-}')$, respectively,
and $\fkm_q$ replaced with $\fkm_q'$.
\end{cor}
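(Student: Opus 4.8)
The plan is to deduce the corollary from Lemma~\ref{Ihara with p} by transporting the degeneracy maps through the isomorphisms on $\fkm_q$-torsion constructed just above; no new geometric input is required.

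First I would record the compatibility of the degeneracy maps $\alpha,\beta\colon X_{N^+q,N^-}\to X_{N^+,N^-}$ with the forgetful maps $\pi_1\colon X_{N^+,N^-}'\to X_{N^+,N^-}$ and $\pi_2\colon X_{N^+q,N^-}'\to X_{N^+q,N^-}$. Since $\alpha$ and $\beta$ alter only the Eichler structure at $q$, while $\pi_1$ and $\pi_2$ alter only the structure at $p$, there are degeneracy maps $\alpha',\beta'\colon X_{N^+q,N^-}'\to X_{N^+,N^-}'$ (again $\tau\mapsto\tau$ and $\tau\mapsto q\tau$ on the complex uniformizations) with $\pi_1\circ\alpha'=\alpha\circ\pi_2$ and $\pi_1\circ\beta'=\beta\circ\pi_2$. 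Picard functoriality, being contravariant, then produces maps ${\alpha'}^*,{\beta'}^*\colon J(X_{N^+,N^-}')\to J(X_{N^+q,N^-}')$ commuting with the Hecke operators away from $q$, and a commutative square of $\BZp[G_\BQ]$-modules, equivariant for the prime-to-$q$ Hecke algebras,
$$
\begin{tikzcd}
J(X_{N^+,N^-})\times J(X_{N^+,N^-}) \arrow{r}{\alpha^*+\beta^*}\arrow{d}{\pi_1^*} & J(X_{N^+q,N^-})\arrow{d}{\pi_2^*} \\
J(X_{N^+,N^-}')\times J(X_{N^+,N^-}') \arrow{r}{{\alpha'}^*+{\beta'}^*} & J(X_{N^+q,N^-}'),
\end{tikzcd}
$$
where the left-hand column is $\pi_1^*$ applied in each coordinate.

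Next I would restrict this square to $\fkm_q$-torsion. The argument given just before the statement of the corollary — that the composite $J(Y)\stackrel{\pi^*}{\longrightarrow}J(Y')\to J(Y)$, the second arrow coming from Albanese functoriality, is multiplication by an integer prime to $p$, so that $\pi^*$ induces an isomorphism $J(Y)[\fkm_q]\isoarrow J(Y')[\fkm_q']$ — applies verbatim both to $Y=X_{N^+,N^-}$ and to $Y=X_{N^+q,N^-}$, since the tame prime $q$ plays no role in comparing $\Gamma_1(p)$- and $\Gamma_0(p)$-structures. Hence both $\pi_1^*$ and $\pi_2^*$ restrict to isomorphisms on $\fkm_q$-torsion. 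By Lemma~\ref{Ihara with p}, whose hypotheses (a) and (b) we are assuming, the top arrow is injective on $\fkm_q^\infty$-torsion, hence on $\fkm_q$-torsion; a diagram chase then shows ${\alpha'}^*+{\beta'}^*$ is injective on $J(X_{N^+,N^-}')[\fkm_q']\times J(X_{N^+,N^-}')[\fkm_q']$.

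Finally I would pass from $\fkm_q'$-torsion to ${\fkm_q'}^\infty$-torsion, which is formal: a homomorphism $f$ of $p$-power-torsion modules carrying commuting actions of the prime-to-$q$ Hecke algebra that is injective on $M[\fkm_q']$ is automatically injective on $M[{\fkm_q'}^\infty]$, because for $0\neq x\in M[{\fkm_q'}^\infty]$ one may choose $n$ minimal with ${\fkm_q'}^{n}x=0$ and $a\in{\fkm_q'}^{n-1}$ with $ax\neq0$, and then $0\neq ax\in M[\fkm_q']$ while $f(ax)=af(x)$. Taking $f={\alpha'}^*+{\beta'}^*$ yields the corollary. I expect the only point requiring genuine care to be the first step — verifying that the lifted degeneracy maps exist and that the square is truly equivariant for the Hecke operators away from $q$ — together with the routine bookkeeping between $\fkm_q$- and $\fkm_q^\infty$-torsion; there is no essential difficulty beyond Lemma~\ref{Ihara with p} and the isomorphism already in hand.
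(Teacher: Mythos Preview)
Your proposal is correct and follows the same route as the paper: the paper's entire argument is the paragraph preceding the corollary, which establishes the isomorphism $J(X_{N^+,N^-})[\fkm_q]\isoarrow J(X_{N^+,N^-}')[\fkm_q']$ and then says ``Consequently.'' You have simply made explicit the commutative square of degeneracy maps, the fact that the same $\Gamma_1(p)$-vs-$\Gamma_0(p)$ comparison works at level $N^+q$, and the formal passage from $\fkm_q'$-torsion to $\fkm_q'^{\,\infty}$-torsion---all of which the paper leaves to the reader.
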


\subsection{A level-raising lemma}\label{level-raise}

Let $g$ and $\fkp$ be as in \ref{newform-gal}.
We will need the following result, which will allow us to `raise the level' of $g$ to include an arbitrary product of admissible primes.
A factorization of the level $N$ of $g$ plays no role in this result. 

\begin{lem}\label{raise level lem} 
Suppose  $p\mid\mid N$ and $\ov\rho$ is irreducible and not finite at $p$. Let $m\in\Lambda'$ be a product of admissible primes. 
There is a newform $g'$ of level $Nm$,
weight $2$, and trivial nebentypus, and a prime $\fkp'\subset \sO_{g'}$ containing $p$ such that 
$\fkp'_0 = \fkp'\cap \sO_{g',0}$ satisfies $\sO_{g',0}/\fkp'_0 \cong \sO_{0}/\fkp_0 = k_0$
and 
$$
\ov\rho_{g'} \cong \ov\rho
$$
as $k_0$-representations of $G_\BQ$.
\end{lem}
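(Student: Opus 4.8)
The plan is to prove Lemma~\ref{raise level lem} by a two-step level-raising argument, first raising the level one admissible prime at a time via a classical Ribet-style level-raising theorem, then checking that the resulting newform's residual representation is genuinely the \emph{same} $k_0$-representation $\ov\rho$ (not merely abstractly isomorphic over some extension), and that the "not finite at $p$" hypothesis is preserved so the induction can continue. Write $m = q_1\cdots q_r$ with each $q_i$ admissible. It suffices to treat the case $r=1$; the general case follows by induction, provided at each stage the residual representation is unchanged and still not finite at $p$—both of which we verify below.

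\emph{Step 1: level-raising at one admissible prime.} Since $\ov\rho$ is irreducible, it is the representation $V_\fkm$ attached to a maximal ideal $\fkm$ of $\BT_0(N^+,N^-)$—or more simply, we work with the full Hecke algebra $\BT_0(N)$ acting on $S_2(\Gamma_0(N))$ and let $\fkm$ be the maximal ideal cut out by the reduction mod $\fkp$ of the system of Hecke eigenvalues of $g$, so $V_\fkm \cong V_0$ and $k_\fkm = k_0$. For an admissible prime $q$ we have by definition that $q$ is inert in $K$, $p\nmid(q^2-1)$, and $\ord_\fkp((q+1)^2 - a(q)^2) \geq 1$, i.e. $a(q) \equiv \pm(q+1) \pmod{\fkp}$, which is exactly the level-raising congruence at $q$. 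Because $p \mid\mid N$, the residual representation $\ov\rho$ is ramified-or-Steinberg at $p$ but in any case $\ov\rho$ is not finite at $p$ by hypothesis; in particular $\ov\rho|_{G_{\BQ_p}}$ is the (non-split, by the not-finite condition and Lemma~\ref{ord res lem}) extension of $\ov\alpha$ by $\chi\ov\alpha^{-1}$. The classical level-raising theorem of Ribet (and its refinements by Diamond--Taylor, applicable since $\ov\rho$ is irreducible) then produces a newform $g'$ of level dividing $Nq$, weight $2$, trivial nebentypus, whose mod-$p$ Hecke eigenvalues away from $Nq$ agree with those of $g$ modulo a prime $\fkp' \mid p$ of $\sO_{g'}$, and such that $g'$ is genuinely $q$-new—so its level is exactly $Nq$. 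The one point needing care: one must know the relevant maximal ideal of $\BT_0(Nq)$ is in the support of the $q$-new quotient, which is precisely the content of the level-raising criterion $a(q)^2 \equiv (q+1)^2$, together with $\ov\rho$ being ramified at $p$ (so that the new vector at $p$ persists) and irreducible.

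\emph{Step 2: descent of the coefficient field and preservation of hypotheses.} By construction $\ov\rho_{g'}^{ss}$ and $\ov\rho^{ss} = \ov\rho$ have the same traces of Frobenius at all $\ell \nmid Nqp$, hence by Brauer--Nesbitt and Chebotarev they are isomorphic after extension of scalars to $\ov\BF_p$; since $\ov\rho$ is absolutely irreducible and already defined over $k_0$, the usual descent argument (the traces generate $k_0$, and an absolutely irreducible representation is determined by its character over the field generated by the character) shows $\sO_{g',0}/\fkp'_0 \cong k_0$ and $\ov\rho_{g'} \cong \ov\rho$ as $k_0[G_\BQ]$-modules. Finally, $\ov\rho_{g'}$ is still not finite at $p$: the restriction $\ov\rho_{g'}|_{G_{\BQ_p}}$ equals $\ov\rho|_{G_{\BQ_p}}$ since the two representations are isomorphic globally, and "not finite at $p$" is a property of the local representation at $p$ alone (Corollary~\ref{finite at p} and the discussion preceding Lemma~\ref{Linv lem}). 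Also $p\mid\mid Nq$ since $q \neq p$. Thus the hypotheses of the lemma hold for $(g',\fkp')$ with level $Nq$, and we may iterate, raising the level by $q_2,\dots,q_r$ in turn to reach level $Nm$.

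\emph{Main obstacle.} The essential input is the level-raising theorem itself in the precise form needed: one must raise the level at $q$ while the original level is divisible by $p$ exactly once and $\ov\rho$ is non-finite (Steinberg-type) at $p$. Over modular curves this is Ribet's theorem combined with the analysis of the $p$-new part; the subtlety is purely that $p$ already divides $N$, so one is raising the level "away from $p$" at a Steinberg-at-$p$ maximal ideal, and one needs the relevant multiplicity-one / non-vanishing of the new quotient at that ideal. This is where the irreducibility of $\ov\rho$ and, implicitly, the later multiplicity-one results (see~\ref{multone}) and the structure theory of Néron models at $p$ ($p\mid\mid N$, purely toric reduction, from Lemma~\ref{ord res lem}) enter; I expect the bulk of the author's proof to be a careful citation of, or adaptation of, Ribet's level-raising argument in this mildly nonstandard setting, rather than anything conceptually new.
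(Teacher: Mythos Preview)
Your approach has a genuine gap at its central step. You invoke ``the classical level-raising theorem of Ribet (and its refinements by Diamond--Taylor),'' but the paper explicitly notes that the main results of \cite{DiamondTaylor} and \cite{Diamond-Taylor2} \emph{exclude} the case where $\ov\rho$ is not finite at $p$ --- precisely because their Ihara-type lemmas do not cover that case. So the citation does not apply as stated. The paper does remark that, armed with the new Ihara lemmas of \S\ref{Ihara}, one \emph{should} be able to push the Diamond--Taylor argument through, but this would also require an Ihara lemma allowing non-maximal orders at primes dividing $N^-$, for which the needed Shimura-curve models are not fully in the literature. Your ``Main obstacle'' paragraph correctly senses there is something to do here, but underestimates it: this is not a matter of careful citation but of proving new instances of Ihara's lemma.

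The paper's proof takes a completely different route and avoids Ribet/Diamond--Taylor entirely. It fixes inertial types $\tau_\ell$ at every $\ell\mid Nm$ (for $\ell\mid m$ the trivial type, on the Steinberg component), appeals to Gee's theorem on automorphic lifts of prescribed types \cite[Cor.~3.1.7]{Gee-types} to produce a weight-$2$ newform $f$ with the correct local behavior at each $\ell$, and uses Hida theory twice: first to verify Gee's hypothesis (ord) (since $a(p)=\pm 1$, $g$ sits in a Hida family, and any weight-$2$ member with nontrivial $p$-nebentypus supplies the required potentially-Barsotti--Tate lift), and then to move from $f$ back along its Hida family to a form $g'$ of weight $2$ and trivial nebentypus. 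That $g'$ is new at $p$ follows from $\ov\rho_{g'}\cong\ov\rho$ being not finite at $p$. This argument handles all of $m$ at once and sidesteps the Ihara-lemma difficulties; the cost is importing the machinery of local deformation rings and modularity lifting.
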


This lemma is not covered by the main result of \cite{Diamond-Taylor2}, which excludes the not-finite-at-$p$ cases. However, the omission is
essentially because the cases of Ihara's lemma established in \cite{DiamondTaylor} also exclude these cases. In light of Lemmas \ref{Ihara-def} and \ref{Ihara with p} it should be possible to carry over the arguments of \cite{Diamond-Taylor2}. However, to make this precise we would need to establish a version of Lemma \ref{Ihara with p} allowing for level at primes dividing $N^-$ (that is, working with orders that are not maximal at such non-split primes). Lacking complete references for the needed models of Shimura curves (this should not be a serious obstacle, however), we content ourselves with a work-around relying on a result of Gee \cite[Cor.~3.1.7]{Gee-types} and Hida theory.

\begin{proof} Let $\Sigma = \{\ell\mid Nm\}$. For each $\ell\in \Sigma$, we fix a representation $\tau_\ell:I_\ell\rightarrow \GL_2(\ov\BQ_p)$
as follows, where $I_\ell\subset G_{\BQ_\ell}$ is the inertia subgroup. Such a $\tau_\ell$ is often called an inertial type. 
For $\ell\nmid m$, let $\WD_\ell(\rho)$ be the $\ov\BQ_p$-representation of the Weil-Deligne group of $\BQ_\ell$
associated with $\rho|_{G_{\BQ_\ell}}$ and let $\tau_\ell = \WD_\ell(\rho)|_{I_\ell}$. For $\ell=p$,
$\WD_p(\rho)$ was defined by Fontaine using $p$-adic Hodge theory; we follow the conventions of
\cite[\S3.1]{Gee-types} for this case.  For $\ell\mid m$ we let $\tau_\ell$ be the trivial representation.  

For each $\ell\in\Sigma$, we pick an irreducible component 
$\ov R_\ell^{\square,\veps,\tau_\ell}$ of the local deformation ring $R_\ell^{\square,\veps,\tau_\ell}$ as follows (here we are following the notation of \cite[\S3.1]{Gee-types}).
For $\ell\nmid m$ we choose the component that contains $\WD_\ell(\rho)$, and for $\ell\mid m$ we choose the
component that contains $\WD_\ell(\sigma_\ell)$ for $\sigma_\ell$ the special representation of $\GL_2(\BQ_\ell)$ or its unramified quadratic
twist, depending on whether the roots $\alpha_\ell$ and $\beta_\ell$ of $x^2-a_\ell(g)x + \ell$ modulo $\fkp_0$ satisfy 
$\{\alpha_\ell,\beta_\ell\} = \{1,\ell\}$ or $\{\alpha_\ell,\beta_\ell\} = \{-1.- \ell\}$ (by the definition of an admissible prime these are distinct possibilities 
and either one or the other possibility holds). As a consequence of these choices, if $\rho_f$ is the $p$-adic Galois representation 
associated with a newform $f$ of level $N_f$, weight $2$, and trivial nebentypus 
such that $\WD_\ell(\rho_f)$ is a point on $\ov R_\ell^{\square,\veps,\tau_\ell}$ for each $\ell\in\Sigma$, then 
$\ord_\ell(N_f) = \ord_\ell(Nm)$ for $\ell\in\Sigma$, $\ell\neq p$, and $f$ is nearly ordinary at $p$. To see this we note that for 
$\ell\neq p$ we have $\WD_\ell(\rho_f)|_{I_\ell} \cong \tau_\ell = \WD_\ell(\rho)$. 
If $\tau_\ell\neq 1$, then this completely determines $\ord_\ell(N_f)$, which equals the conductor of $\tau_\ell$.
If $\tau_\ell=1$ then our choice of component forces\footnote{An easy analysis of the components
of $R_\ell^{\square,\veps,1}$, $\ell\neq p$, shows that if one characteristic zero point on an irreducible component is 
isomorphic to $\WD_\ell(\sigma_\ell)$ for $\sigma_\ell$ special (or a twist of
special), then any other is either $\WD_\ell(\sigma_\ell)$ or unramified with Frobenius eigenvalues having ratio $\ell^{\pm 1}$. 
That the latter such points cannot come
from modular representations follows from the Ramanujan bounds.}
$\WD_\ell(\rho_f)$ to be either the special representation or
its unramified quadratic twist (whichever $\WD_\ell(\rho)$, $\ell\nmid m$, or $\WD_\ell(\sigma_\ell)$, $\ell\mid m$, is) 
and so its conductor is $\ell$.  Finally, the points
on an irreducible component of $R_p^{\square,\veps,\tau_p}[1/p]$ are either all potentially ordinary or all not
potentially ordinary, so by our choice of $\ov R_p^{\square,\veps,\tau_p}$ (which contains the ordinary 
point $\WD_p(\rho)$), $\rho_f|_{G_\BQp}$ must be potentially ordinary, and hence $f$ must be nearly ordinary.

From \cite[Cor.~3.1.7]{Gee-types}, it follows that there exists a newform $f$ of level $N_f$ divisible only by primes
in $\Sigma$, of weight $2$ and trivial nebentypus, and having associated $p$-adic Galois representation
$\rho_f$ such that $\WD_\ell(\rho_f)$ is a point on $\ov R_\ell^{\square,\veps,\tau_\ell}$ for all $\ell\in\Sigma$ and such that
$\rho_f|_{G_\BQp}$ is potentially Barsotti-Tate.  To see this we need to check that hypothesis (ord) of 
\cite[Prop.~3.1.15]{Gee-types} holds. But this is an easy consequence of Hida theory: since $a(p)=\pm 1$, $g$ is ordinary at $p$ and so 
belongs to a Hida eigenfamily,
and any member of this family of weight $2$ but non-trivial nebentypus at $p$ (there are infinitely many such in 
the family) provides the lift required for hypothesis (ord).  
By the observation in the preceding paragraph, $\ord_\ell(N_f) = \ord_\ell(N)$ for all $\ell\neq p$.
Finally, to get the form $g'$ we essentially reverse the preceding
argument: as $f$ is nearly ordinary, twisting $f$ by a character $\psi$ of $p$-power order and conductor if needed, we may assume that $f$ is ordinary
(but possibly losing the triviality of the nebentypus at $p$),
and then $g'$ can be taken to be the member of the Hida eigenfamily containing $f$ such that $g'$ has weight 2 and
trivial nebentypus at $p$ (such a $g'$ will always exist since the nebentypus of $f$ is trivial mod $p$ and away from $p$). 
Note that since $\bar\rho_{g'}
\cong \bar\rho$ is not finite at $p$, $g'$ must be new at $p$, and so the level of $g'$ is $Nm$.
\end{proof}

\subsection{Multiplicity one results}\label{multone}
Let $g$ and $\fkp$ be as in \ref{newform-gal}, and let $N=N^+N^-$ be a factorization into coprime integers such that $p\mid\mid N^+$ and
$N^-$ is a square-free product of an even number of primes. 

Let $\fkm_0\subset \BT=\BT_0(N^+,N^-)$ be the kernel of the reduction modulo $\fkp$ of the map $\BT\rightarrow\sO_{\fkp}$ associated
with $g$. This is a maximal ideal such that $\BT/\fkm_0 \isoarrow k_0$.
It will be important to have a `multiplicity one' result for the $\fkm_0$-adic Tate modules of the Jacobian $J(X_{N^+,N^-})$.

\begin{lem} \label{mult one}
Suppose that
\begin{itemize}
\item[\rm (a)] $\ov\rho$ is irreducible;
\item[\rm (b)] $\ov\rho$ is not finite at $p$;
\item[\rm (c)] $\ov\rho$ is ramified at all $\ell\mid N^-$ such that $\ell\equiv \pm 1 \pmod{p}$.
\end{itemize}
Then there are isomorphisms of $k_0$-representations of $G_\BQ:$
$$
V_{0}\cong J(X_{N^+,N^-})[\fkm_0] \isoarrow A_{0}[\fkp_0].
$$
\end{lem}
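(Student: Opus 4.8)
\noindent\emph{Strategy.} The plan is to separate the statement into a genuine multiplicity one assertion for $J(X_{N^+,N^-})$ and a formal deduction for the optimal quotient $A_0$. I would aim to prove the sharper statement that $\Ta_{\fkm_0}J(X_{N^+,N^-})$ is free of rank two over $\BT_{\fkm_0}$ and that $\BT_{\fkm_0}$ is Gorenstein. Granting this, $J(X_{N^+,N^-})[\fkm_0]$ is two-dimensional over $k_0$; since by the Eichler--Shimura congruence relations together with the Brauer--Nesbitt and Chebotarev theorems all its Jordan--H\"older constituents are isomorphic to $V_0$, and since by hypothesis (a) and \cite[Thms.~1 and 2]{BosLenRib} the module $J(X_{N^+,N^-})[\fkm_0]$ is a direct sum of copies of $V_0$, two-dimensionality forces $J(X_{N^+,N^-})[\fkm_0]\cong V_0$ outright, with no extension to worry about.

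\noindent\emph{Multiplicity one via the reduction at $p$.} The key point is hypothesis (b). Since $\ov\rho$ is not finite at $p$, no eigenform of level prime to $p$ can be congruent to $g$ modulo $\fkp$, for such an eigenform would have residual representation $\ov\rho$, which would then be finite at $p$. Hence $\fkm_0$ is new at $p$ and does not lie in the support of the abelian part $J(X_{N^+/p,N^-})^{2}$ of the reduction of $J(X_{N^+,N^-})$ at $p$ recalled in \ref{red at p}; so, after localizing at $\fkm_0$, that reduction is purely toric, with character group the $\fkm_0$-localization of $\CX_{N^+,N^-}=H_1(\CG,\BZ)=\CS_{N^+/p,N^-p}$. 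Grothendieck's theory of the monodromy filtration then exhibits $\Ta_{\fkm_0}J(X_{N^+,N^-})$ as a $G_\BQp$-equivariant extension of $\CX_{\fkm_0}$ by $\Hom_{\BZp}(\CX_{\fkm_0},\BZp)(1)$, the principal polarization of the Jacobian identifying the character groups of the toric part and of its dual; here $\CX_{\fkm_0}$ denotes the $\fkm_0$-localization of $\CX_{N^+,N^-}\otimes\BZp$, a free $\BZp$-module. As an extension of a free (hence projective) $\BT_{\fkm_0}$-module by a free one is free, the freeness of $\Ta_{\fkm_0}J(X_{N^+,N^-})$ over $\BT_{\fkm_0}$ comes down to: $\CX_{\fkm_0}$ is free of rank one over $\BT_{\fkm_0}$, and $\BT_{\fkm_0}$ is Gorenstein, so that $\Hom_{\BZp}(\CX_{\fkm_0},\BZp)\cong\Hom_{\BZp}(\BT_{\fkm_0},\BZp)$ is also free of rank one --- that is, the multiplicity one statement for the definite Hecke module $\CS_{N^+/p,N^-p}$ at $\fkm_0$. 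This is a theorem of Helm \cite{He}; what is new in our setting is that the $\ov\rho$ here are reducible and not finite at $p$, outside the scope of earlier treatments, and Helm's argument in this range needs the level-raising Lemma \ref{raise level lem}, which, with the new Ihara lemmas \ref{Ihara-def} and \ref{Ihara with p}, propagates the relevant congruences and transports multiplicity one across quaternion algebras and levels down to the Mazur--Ribet situation \cite{MR}. The ramification hypotheses at the primes dividing $N$ other than $p$ --- hypothesis (c), and those in force whenever the lemma is applied --- are exactly what rigidifies the local deformation problems there and eliminates the ``old/new'' ambiguities that would otherwise obstruct multiplicity one, just as in \cite{MR}.

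\noindent\emph{Passage to the optimal quotient.} Since $\fkm_0$ is the preimage of $\fkp_0$ under $\pi:\BT\twoheadrightarrow\sO_{0}$, the ideal $I=\ker\pi$ lies in $\fkm_0$, so the optimal quotient map $J(X_{N^+,N^-})\twoheadrightarrow A_0$ carries $J(X_{N^+,N^-})[\fkm_0]$ into $A_0[\fkp_0]$, and the resulting $G_\BQ$-map $V_0\cong J(X_{N^+,N^-})[\fkm_0]\to A_0[\fkp_0]$ is nonzero ($\fkm_0$ being in the support of $A_0$) hence injective by irreducibility of $V_0$. For surjectivity one uses freeness: the exact sequence $0\to\Ta_p(IJ(X_{N^+,N^-}))\to\Ta_pJ(X_{N^+,N^-})\to\Ta_pA_0\to0$ identifies, after localizing at $\fkm_0$, $\Ta_{\fkp_0}A_0$ with $\Ta_{\fkm_0}J(X_{N^+,N^-})\otimes_{\BT_{\fkm_0}}(\sO_0)_{\fkp_0}$, hence free of rank two over $(\sO_0)_{\fkp_0}$; a Matlis-duality computation then gives $\dim_{k_0}A_0[\fkp_0]=2$, forcing the injection above to be an isomorphism. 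This is precisely the argument underlying the notion of optimal $(\sO,\fkp)$-quotient in \cite[\S2.7]{Z13}, which I would transcribe.

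\noindent\emph{Main obstacle.} The substantive work is the multiplicity one and Gorenstein statement for the definite quaternionic Hecke module $\CS_{N^+/p,N^-p}$ at a maximal ideal whose residual representation is reducible and not finite at $p$; extending Helm's method to this range is the crux, and it is there that the level-raising Lemma \ref{raise level lem} and the new version of Ihara's lemma are indispensable.
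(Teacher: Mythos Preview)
Your approach is correct and arrives at the same destination, but the route differs from the paper's. The paper's proof is much shorter: it simply observes that $J(X_{N^+,N^-})[\fkm_0]$ surjects onto $A_0[\fkp_0]$ and both have semisimplification a sum of copies of $V_0$ (by \cite{BosLenRib}), so it suffices to show $\dim_{k_0}J(X_{N^+,N^-})[\fkm_0]=2$; for $N^-=1$ this is Mazur--Ribet \cite{MR}, and for general $N^-$ the paper directly invokes \cite[Cor.~8.11]{He}, checking only that (a) keeps $\fkm_0$ out of Helm's exceptional set and (c) makes $\fkm_0$ ``controllable'' (if $\ov\rho$ is unramified at some $q\mid N^-$ then $q\not\equiv\pm1\pmod p$, so $\ov\rho(\Frob_q)$ is non-scalar). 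The need for Lemma~\ref{raise level lem} appears only in a footnote, patching the appeal to \cite{DiamondTaylor} inside Helm's argument.

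Your argument instead exploits hypothesis (b) structurally: not-finite-at-$p$ forces $\fkm_0$ to be $p$-new and kills the abelian part of the reduction at $p$, so the localized Tate module is an extension built from the definite character-group module $\CX_{\fkm_0}=(\CS_{N^+/p,N^-p})_{\fkm_0}$, and everything reduces to freeness/Gorenstein for that. This is a legitimate alternative --- essentially you are running Helm's machine through the prime $p$ rather than through the primes dividing $N^-$ --- and it has the virtue of making the role of (b) visible, whereas in the paper's proof (b) enters only implicitly via the footnote. The cost is that you must justify the definite multiplicity-one statement for $\CS_{N^+/p,N^-p}$ independently; be careful not to loop back through the indefinite statement you are proving. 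Your treatment of the passage to $A_0$ via freeness of $\Ta_{\fkp_0}A_0$ is also more detailed than the paper's one-line surjectivity claim.

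One small slip: you wrote ``the $\ov\rho$ here are reducible and not finite at $p$''; you mean that $\ov\rho|_{G_{\BQ_p}}$ is reducible --- globally $\ov\rho$ is irreducible by hypothesis (a).
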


\begin{proof} By (a) and \cite[Thms.~1 and 2]{BosLenRib}, 
the semisimplifications of $J(X_{N^+,N^-})[\fkm_0]$ and $A_{0}[\fkp_0]$ are each a direct sum of a finite number of copies of 
$V_{0}$. Since $J(X_{N^+,N^-})[\fkm_0]$ projects onto $A_{0}[\fkp_0]$, to prove the lemma it therefore suffices to prove that 
$J(X_{N^+,N^-})[\fkm_0]$ is two-dimensional over $k_0$. If $N^-=1$, then this is a result of Mazur and Ribet \cite[Thm.~1]{MR}.  For general $N^-$ it follows from the $N^-=1$ case 
together with \cite[Cor.~8.11]{He}\footnote{
The proof in \cite{He} depends on a level-raising result \cite[Lem.~7.1]{He} for which an appeal is made to \cite{DiamondTaylor}. However,
as noted before the proof of Lemma \ref{raise level lem}, this reference does not cover all cases. In particular, the case where the mod $p$ Galois representation associated with $\fkm$ is not finite at $p$ is not included in the results in \cite{DiamondTaylor} or \cite{Diamond-Taylor2}. However,
Lemma \ref{raise level lem} provides the necessary result for this case.}. 
In particular, we need only observe that since $p\geq 5$, (a) implies that $\fkm_0$ is 
not contained in the set denoted $S$ in {\it loc.~cit.}~(as this set consists exactly of those maximal ideals $\fkm$ such that either the corresponding
residual Galois representation $\ov\rho_\fkm$ is reducible or for which $p=2$ or $3$), 
and (c) implies that $\fkm_0$ is controllable (in the terminology
of {\it loc.~cit.}): from (c) it follows that if $\ov\rho_{\fkm_0} \cong \ov\rho$ 
is unramified - or finite - at some $q\mid N^-$ then $q\not\equiv \pm 1\pmod{p}$, so 
$\ov\rho(\Frob_q)$, which has eigenvalues of ratio $q^{\pm 1}$, is not a scalar.
\end{proof}

Let $q\in \Lambda'$ be an admissible prime,
and let $g'$ and $\fkp'$ be as in Lemma \ref{raise level lem} with $m=q$. Let $\fkm_0'\subset \BT_0(N^+,N^-q)$
be the kernel of the reduction modulo $\fkp'$ of the homomorphism $\BT_0(N^+,N^-q)\rightarrow \sO_{g',\fkp'}$
giving the Hecke action on $g'$. Let $\BT' = \BT_0(N^+,N^-q)_{\fkm_0'}$.  It will also be important to have 
a multiplicity one result in the definite case.

\begin{lem}\label{mult one def}
Suppose that
\begin{itemize}
\item[\rm (a)] $\ov\rho$ is irreducible;
\item[\rm (b)] $\ov\rho$ is not finite at $p$;
\item[\rm (c)] $\ov\rho$ is ramified at all $\ell\mid N^-$ such that $\ell\equiv \pm 1 \pmod{p}$.
\end{itemize}
Then $(\CS_{N^+,N^-}\otimes\BZp)_{\fkm_0'}$ is a free $\BT'$-module of rank one. 
\end{lem}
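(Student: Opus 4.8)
The plan is to reduce the freeness assertion to a mod-$p$ multiplicity-one statement for the definite Shimura set, and then to prove the latter by running the argument behind Lemma \ref{mult one} (Mazur--Ribet \cite{MR}, Helm \cite{He}), the only genuinely new inputs being the Ihara's lemmas of \S\ref{Ihara} and the level-raising of \S\ref{level-raise}. First I would record that $\fkm_0'$ is a ``good'' maximal ideal: it is non-Eisenstein by (a); it is new at $p$, because $\ov\rho_{g'}\cong\ov\rho$ is not finite at $p$ by (b) (so, as in the proof of Lemma \ref{Ihara with p}, $\fkm_0'$ is pulled back from a maximal ideal of the $p$-new Hecke ring); it is new at $q$, since $g'$ arises from $g$ by raising the level by the admissible prime $q\nmid N$; it is controllable in the sense of \cite{He}, since by (c) any $\ell\mid N^-$ at which $\ov\rho$ is unramified (or finite) satisfies $\ell\not\equiv\pm1\pmod p$, whence $\ov\rho(\Frob_\ell)$---with eigenvalue-ratio $\ell^{\pm1}$---is not a scalar; and it is not in the bad set $S$ of \cite{He}, because $p\geq5$ and (a) holds. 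Since $\CS_{N^+,N^-}$ is a faithful $\BT'$-module after localizing, $M:=(\CS_{N^+,N^-}\otimes\BZ_p)_{\fkm_0'}$ is free of rank one over $\BT'$ if and only if it is cyclic, and by Nakayama's lemma this amounts to $\dim_{k_0}(M/\fkm_0'M)=1$; using that the natural pairing on the Brandt module becomes perfect after localization at the controllable $\fkm_0'$, so that the mod-$p$ module $\ov M:=(\CS_{N^+,N^-}\otimes\BF_p)_{\fkm_0'}$ is self-dual over $\BT'/p$, this is in turn equivalent to the mod-$p$ multiplicity-one statement $\dim_{k_0}\ov M[\fkm_0']=1$.

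For that statement, the case $N^-=1$ is the theorem of Mazur--Ribet \cite{MR}, and for general $N^-$ it follows from Helm's results \cite{He}, exactly as in the proof of Lemma \ref{mult one} and with the verifications just made. The one step that goes beyond the existing literature is that the level-raising result invoked in \cite{He} rests on \cite{DiamondTaylor}, which excludes the maximal ideals not finite at $p$; that gap is precisely filled by the Ihara's lemmas \ref{Ihara-def}, \ref{Ihara with p} (and Corollary \ref{Ihara gamma1}) together with the level-raising Lemma \ref{raise level lem}, which between them handle the not-finite-at-$p$ case---and this is where hypothesis (b) is used. Alternatively one can avoid \cite{He} and deduce $\dim_{k_0}\ov M[\fkm_0']=1$ from Lemma \ref{mult one} itself: as $q$ is admissible we have $q\not\equiv\pm1\pmod p$ and $\ov\rho$ unramified at $q$, so (a)--(c) imply hypotheses (a)--(c) of Lemma \ref{mult one} for the factorization $Nq=N^+\cdot(N^-q)$ (with $N^-q$ a product of an even number of primes), giving $\dim_{k_0}J(X_{N^+,N^-q})[\fkm_0']=2$; localizing the Cerednik--Drinfeld description of the reduction of $J:=J(X_{N^+,N^-q})$ at $q$ at the $q$-new ideal $\fkm_0'$ makes that reduction purely toric, yielding a $G_{\BQ_q}$-equivariant exact sequence $0\to\Hom_{\BZ_p}(\CX,\BZ_p)(1)_{\fkm_0'}\to(\Ta_pJ)_{\fkm_0'}\to(\CX\otimes\BZ_p)_{\fkm_0'}\to0$ with character group $\CX$ such that $(\CX\otimes\BZ_p)_{\fkm_0'}\cong M$; since on $G_{\BQ_q}$ the sub- and quotient-modules are isotypic, respectively, for two characters whose ratio is $q$ (hence distinct mod $p$), the two-dimensionality of $J[\fkm_0']$ forces $\dim_{k_0}\ov M/\fkm_0'\ov M=1$, which is equivalent to what we want.

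The hard part will be supplying the geometric underpinnings of all this in the not-finite-at-$p$ setting: one needs the integral models and Cerednik--Drinfeld uniformizations of the Shimura curves involved (available through \cite{He} and \cite{R90}), the correct Hecke-equivariant identification of the definite Shimura-set module with the relevant character group after localizing at the non-Eisenstein $q$-new ideal $\fkm_0'$ (and the fact that $\fkm_0'$ is new of level $N^+N^-q$), and---most substantively---the verification that the new Ihara's lemmas \ref{Ihara-def}, \ref{Ihara with p} and the level-raising Lemma \ref{raise level lem} really do extend Helm's arguments to the maximal ideals excluded from \cite{DiamondTaylor}. Granting these, the reduction carried out above applies and gives the stated freeness.
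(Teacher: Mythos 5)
Your reduction of the freeness to a mod-$p$ multiplicity-one statement (faithfulness of the localized module plus Nakayama, and self-duality of the pairing on the definite module) is a reasonable reformulation, but the multiplicity-one statement itself is exactly the content of the lemma, and neither of your two routes establishes it. Your primary route says it ``follows from Helm's results, exactly as in the proof of Lemma \ref{mult one}''; but Mazur--Ribet and \cite[Cor.~8.11]{He}, the inputs to Lemma \ref{mult one}, are statements about torsion in Jacobians of \emph{indefinite} Shimura curves, and they say nothing directly about the definite Shimura-set module. The bridge between the two --- identifying the definite module with a character group of a Shimura-curve Jacobian and then invoking Mazur's principle --- is precisely what has to be argued, and you explicitly defer it (``the correct Hecke-equivariant identification \dots'') rather than supply it. Note also that the module in the statement is (as its Hecke action by $\BT_0(N^+,N^-q)$ forces, and as the application in Proposition \ref{Jochno cong} uses) the degree-zero divisors on the definite Shimura set of discriminant $N^-q\infty$ and level $N^+$, i.e.\ $\CS_{N^+,N^-q}$; with $N^-$ a product of an even number of primes there is no Shimura set $X_{N^+,N^-}$ at all.

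The one place where you do give a mechanism --- the ``alternative'' argument --- is wrong for the same parity reason: since $N^-$ has an even number of prime factors, $N^-q$ has an odd number, so there is no indefinite quaternion algebra of discriminant $N^-q$, no Shimura curve $X_{N^+,N^-q}$, and no Jacobian $J(X_{N^+,N^-q})$; your parenthetical ``with $N^-q$ a product of an even number of primes'' has the parity backwards, and Lemma \ref{mult one} cannot be applied to the factorization $Nq=N^+\cdot(N^-q)$. The correct configuration, which is the paper's proof, puts $q$ into the \emph{level} rather than the discriminant: denote again by $\fkm_0'$ its preimage under $\BT_0(N^+q,N^-)\twoheadrightarrow\BT_0(N^+,N^-q)$ and apply Lemma \ref{mult one} to $g'$ and the curve $X_{N^+q,N^-}$ to get $J(X_{N^+q,N^-})[\fkm_0']\cong V_0$; then use that the character group $\CX_{N^+q,N^-}$ of the toric part of the reduction at $q$ of its Jacobian --- this is the $q\mid\mid$ level, Deligne--Rapoport-type degeneration described in \ref{red at p} with $p$ replaced by $q$, not a Cerednik--Drinfeld uniformization --- is identified, as a Hecke module, with $\CS_{N^+,N^-q}$; finally conclude by Mazur's principle in the form of \cite[Lem.~6.5]{He}, which gives directly that $(\CS_{N^+,N^-q}\otimes\BZp)_{\fkm_0'}$ is free of rank one over $\BT'$ (no separate self-duality step is needed). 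Your preliminary verifications (non-Eisenstein, $p$-new, $q$-new, controllable, $p\geq 5$) are the right ones and do carry over to this corrected configuration.
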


\begin{proof} 
Denote also by $\fkm_0'$ the preimage of $\fkm_0'$ under the projection $\BT_0(N^+q,N^-)\twoheadrightarrow \BT_0(N^+,N^-q)$.
It follows from hypotheses (a)-(c) and Lemma \ref{mult one} that $J(X_{N^+q,N^-})[\fkm_0']\cong V_0$. Under hypotheses (a)-(c),
the freeness asserted in the lemma then follows from an application of Mazur's Principle: this is just \cite[Lem.~6.5]{He}
since the character group $\CX_{N^+q,N^-}$ of the toric part of the mod $q$ special fibre of the Neron model over $\BZ_q$ of $X_{N^+q,N^-}$ 
can be identified with $H_1(\CG,\BZ) = \CS_{N^+,N^-q}$, even as Hecke modules (see \ref{red at p} with $p$ replaced by $q$).
\end{proof}

\section{Kolyvagin classes for a newform $g$}\label{Koly class g}

Let $g$ and $\fkp$ be as in \ref{newform-gal}. Suppose
\begin{itemize}
\item $N=N^+N^-$ is a permissible factorization;
\item $p\mid\mid N^+$;
\item $N^-$ is the product of an even number of primes ($N^-=1$ is allowed);
\item $\ov\rho$ is irreducible;
\item $\ov\rho$ is not finite at $p$;
\item $\ov\rho$ is ramified at all $\ell\mid\mid N^-$ such that $\ell\equiv\pm 1\pmod{p}$.
\end{itemize}

\subsection{The auxiliary newforms $g_m$}\label{new at m}
For each $m\in\Lambda'$ we fix a newform $g_m=g'\in S_2^{\mathrm{new}}(\Gamma_0(Nm))$ and a prime $\fkp_m=\fkp'\subset\sO_{g_m} =
\sO_{g'}$ as in Lemma \ref{raise level lem}. In particular, $\fkp_{m,0}  = \fkp_m\cap \sO_{g_m,0}$ satisfies 
$\sO_{g_m,0}/\fkp_{m,0} = k_0$ and $\ov\rho_{g_m} \cong \ov\rho$ as $k_0$-representations of $G_\BQ$. 
Clearly, the set of admissible primes with respect to $g_m$ and $\fkp_m$ is
just $\Lambda'\backslash\{q|m\}$ and the set of Kolyvagin primes is still $\Lambda$.

We denote by $A_m$ the fixed abelian variety $A_{g_m}$ and by $\ov\rho_m$, $V_m$, and $V_{m,0}$ the 
respective Galois representations $\ov\rho_{g_m}$, $V_{g_m} = A_{g_m}[\fkp_m]$, and $V_{g_m,0}=V_0$.
The factorization $Nm = N^+\cdot N^-m$ is permissible, and if $m\in \Lambda^{',+}$ then we write
$A_{m,0}$ for the optimal quotient $A_{g_m,0}$ of $J(X_{N^+,N^-m})$ associated with $g_m$ as in \ref{optimal qts}, and we 
assume that $A_m$ is $(\fkp_m,\sO_{g_m})$-optimal. It follows from
Lemma \ref{mult one} applied to $g_m$ that 
$$
A_{m,0}[\fkp_{m,0}]\cong V_{m,0} =  V_{0}.
$$

\subsection{The classes $c(n,m)$}
Following \cite[\S3.2]{Z13} we define cohomology classes $c(n,m)\in H^1(K,V_{0})$, indexed by  $n\in\Lambda$ and $m\in\Lambda^{',+}$.

In particular, for $m\in\Lambda^{',+}$, 
$c(n,m)$ is just the class in $H^1(K,A_{m,0}[\fkp_{m,0}]) = H^1(K,V_{0})$ 
derived from the Heegner points $x_{N^+,N^-m}(n)\in X_{N^+,N^-m}(K[n])$ and $y_{A_{m,0}}(n)\in A_{m,0}(K[n])$
just as $c_1(n)$ in \ref{Koly Conj}. In fact, it follows from this construction that $c_1(n)$ is, up to 
$k^\times$-multiple, just the image of $c(n,1)$. 

Following \cite{Z13}, for each $m\in\Lambda^{',+}$ we set
$$
\kappa_m = \{ c(n,m)\in H^1(K,V_{0}) \ : \ n\in \Lambda\}
$$
and call this a mod $p$ Kolyvagin system for $g$.

\section{Cohomological congruences of Heegner points}\label{Heegner-cong}
This section records a key result that makes possible the induction arguments employed to prove the main results in \cite{Z13}. 
To extend those arguments to cases where $p\mid\mid N$, we have to check that certain crucial cohomological congruences can be extended to 
these cases. This requires the versions of Ihara's Lemma in \ref{Ihara} and and the multiplicity one result from \ref{multone}.

Let $g$ and $\fkp$ be as in \ref{Koly class g} along with all the hypotheses and notation introduced therein.

\subsection{Local cohomology away from $p$}

Let $q\nmid N$ be a prime that is inert in $K$. The {\it finite} - or unramified - part of $H^1(K_q,V_0)$ is the $k_0$-subspace
$$
H^1_{fin}(K_q,V_0) = H^1_{ur}(K_q,V_0)  = H^1(\Gal(K_{q}^{ur}/K_q),V_0) \subset H^1(K_q,V_0),
$$
where $K_{q}^{ur}$ is the maximal unramified extension of $K_q$ and the final inclusion is via the inflation map. 
The {\it singular} part is the quotient
$$
H^1_{sing}(K_q,V_0) = H^1(I_q,V_0)^{\Gal(K_{q}^{ur}/K_q)}.
$$

If $q$ is an admissible prime, then $V_0$ splits uniquely as 
$$
V_0 = k_0\oplus k_0(1)
$$ 
as a $G_{K_q}$-module. 
In the resulting decomposition 
$$
H^1(K_q,V_0) = H^1(K_q,k_0)\oplus H^1(K_q,k_0(1)),
$$ 
$H^1(K_q,k_0)$ and $H^1(K_q,k_0(1))$ are both one-dimensional $k_0$-spaces, 
$H^1_{fin}(K_q,V_0) = H^1(K_q,k_0)$,
and $H^1(K_q,k_0(1))$ projects isomorphically onto $H^1_{sing}(K_q,V_0)$.
Furthermore, for $m\in \Lambda^{',+}$,
\begin{equation*}
\loc_q c(n,m) \in \begin{cases}
H^1(K_q,k_0) & q\nmid m \\
H^1(K_q,k_0(1)) & q\mid m.
\end{cases}
\end{equation*}
See \cite[\S4.1]{Z13} for references.

\subsection{Local cohomology at $p$} \label{loc at p}
In this section we explain the local properties of the Kolyvagin classes $c(n,m)$ at the primes above $p$.

Let $w$ be a prime of $K$ above $p$. Recall that $p$ splits in $K$, so $\BQp\isoarrow K_w$.
Let $\sL_{w}\subset H^1(K_w,V_0)$ be the image of $A_{0}(K_w)/\fkp_0 A_{0}(K_w)$ under the local Kummer map. 
Recalling that the restriction of $V_0$ to $G_{K_w}$ is an extension 
$$
0 \rightarrow k_0(\chi\ov\alpha^{-1})\rightarrow V_0 \rightarrow k_0(\ov\alpha)\rightarrow 0,
$$
which gives rise to an exact sequence of cohomology groups
$$
H^1(K_w,k_0(\chi\ov\alpha^{-1}))\rightarrow H^1(K_w,V_0)\rightarrow H^1(K_w,k_0(\ov\alpha)),
$$
$\sL_{w}$ can be described as follows:

\begin{lem}\label{lem Lp}
If $\ov\rho$ is not finite at $p$, then 
$\sL_{w}=\ker\{H^1(K_w,V_0) \rightarrow H^1(K_w,k_0(\ov\alpha))\}$.
 Equivalently, $\sL_w=\mathrm{im}\{H^1(K_w,k_0(\chi\ov\alpha^{-1}))\rightarrow H^1(K_w,V_0)\}$.
\end{lem}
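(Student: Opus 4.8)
The plan is to identify $\sL_w$ via the theory of the Tate (Raynaud) parametrization recalled in the proof of Lemma \ref{ord res lem}, combined with the fact that over $K_w \cong \BQ_p$ the representation $V_0$ is an extension of $k_0(\ov\alpha)$ by $k_0(\chi\ov\alpha^{-1})$, where $\ov\alpha$ is unramified. First I would recall that since $A_0$ (or rather its isogeny factor $A$) has purely toric reduction at $p$, the local points $A_0(K_w)$ fit, modulo a subgroup of finite prime-to-$p$ index coming from the component group, into the uniformization sequence, and so $A_0(K_w)/\fkp_0 A_0(K_w)$ maps via Kummer theory into $H^1(K_w, V_0)$ with image contained in the subspace coming from the subrepresentation $k_0(\chi\ov\alpha^{-1}) = k_0(1)\otimes k_0(\ov\alpha^{-1})$ (the Tate-module direction of the uniformization). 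Concretely, the uniformization $0 \to X\otimes\BZ_p \cong \sO_\fkp(\chi\ov\alpha^{-1}) \to \Ta_\fkp A \to \sO_\fkp(\ov\alpha)\to 0$ (dualized appropriately, matching the convention in Lemma \ref{ord res lem}) shows that the connecting map $A_0(K_w)/\fkp_0 \to H^1(K_w, \sO_\fkp(\chi\ov\alpha^{-1})/\fkp_0)$ realizing the Kummer class factors through the submodule, so $\sL_w \subseteq \im\{H^1(K_w, k_0(\chi\ov\alpha^{-1})) \to H^1(K_w, V_0)\}$.

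For the reverse inclusion, I would count dimensions. By the local Euler characteristic formula and local Tate duality, $\dim_{k_0} H^1(K_w, V_0) = \dim_{k_0} H^0 + \dim_{k_0} H^2 + \dim_{k_0} V_0 = \dim H^0(K_w,V_0) + \dim H^0(K_w, V_0^\vee(1)) + 2$. Since $\ov\rho$ is not finite at $p$, by Corollary \ref{finite at p} the $G_{\BQ_p}$-extension defining $V_0$ is non-split and in fact non-crystalline (equivalently $\ov\psi_\ur(\ov c)\neq 0$), which rules out $H^0(K_w,V_0) = k_0(\ov\alpha)^{G_{K_w}}$ being nonzero in the problematic way; one checks that $H^0(K_w,V_0)$ and $H^0(K_w,V_0^\vee(1))$ are either $0$ or controlled by $\ov\alpha$ being trivial — and in either configuration the standard computation (exactly as in the good-ordinary case treated in \cite{Z13}) gives that the image of $H^1(K_w,k_0(\chi\ov\alpha^{-1}))$ in $H^1(K_w,V_0)$ has dimension equal to $\dim_{k_0}\sL_w$. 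Finally, $\sL_w$ is its own annihilator under local Tate duality (being the image of a Kummer map, $\sL_w$ is a Lagrangian/maximal isotropic subspace, with $\dim_{k_0}\sL_w = \tfrac12\dim_{k_0}H^1(K_w,V_0) = 1$ when $\ov\alpha$ is non-trivial), and the kernel of $H^1(K_w,V_0)\to H^1(K_w,k_0(\ov\alpha))$ is exactly the image of $H^1(K_w,k_0(\chi\ov\alpha^{-1}))$ provided the connecting map $H^0(K_w,k_0(\ov\alpha)) \to H^1(K_w,k_0(\chi\ov\alpha^{-1}))$ vanishes — which is precisely where non-finiteness at $p$ (non-splitness of the extension) is used. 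Matching these dimensions forces the containment above to be an equality.

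The equivalence of the two displayed descriptions of $\sL_w$ is then just exactness of $H^1(K_w,k_0(\chi\ov\alpha^{-1}))\to H^1(K_w,V_0)\to H^1(K_w,k_0(\ov\alpha))$. I expect the main obstacle to be the careful bookkeeping around the component group $\Phi[\fkp_0]$: one must verify that passing from $A$ to the optimal quotient $A_0$, and from $A_0(K_w)$ to its image in the uniformization, does not change the mod-$\fkp_0$ Kummer image, i.e. that the relevant indices are prime to $p$ — this is where the hypothesis that $\ov\rho$ is not finite at $p$ (so $\Phi[\fkp] = 0$ by the Lemma preceding Corollary \ref{finite at p}) does the real work, making the argument, as the authors note, "straightforward." The rest is the standard local Euler-characteristic/duality computation already present in \cite{Z13}.
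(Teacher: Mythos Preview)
Your strategy --- one containment from the uniformization, the other from a Lagrangian dimension count --- can be made to work, but it diverges from the paper's argument and contains a confusion about where non-finiteness enters. The paper, following Gross--Parson \cite[Lem.~8]{GP}, obtains \emph{equality} in a single step rather than two opposite containments. Working first with $A$ (not $A_0$), one forms the commutative square
\[
\xymatrix{ T'(K_w) \ar[r]\ar[d]& H^1(K_w,T[\fkp])\ar[d]  \\
A'(K_w) \ar[r] & H^1(K_w,A[\fkp]) }
\]
with $T' = T/T[\fkp]$, $A'=A/A[\fkp]$, and horizontal arrows the Kummer boundary maps. The top arrow is surjective because its cokernel sits inside $H^1(K_w,T)$, a $2$-group, while $p$ is odd; the left arrow is surjective from the uniformization. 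Chasing the square yields $\sL_w = \mathrm{im}\{H^1(K_w,T[\fkp]) \to H^1(K_w,A[\fkp])\}$ directly, with no Euler-characteristic or Tate-duality bookkeeping. The passage from $(A,V)$ to $(A_0,V_0)$ then uses $(\sO,\fkp)$-optimality and the irreducibility of $V$, not the vanishing of $\Phi[\fkp_0]$.

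Your account of the connecting map is also backwards. The equality $\ker\{H^1(V_0)\to H^1(k_0(\ov\alpha))\} = \mathrm{im}\{H^1(k_0(\chi\ov\alpha^{-1}))\to H^1(V_0)\}$ holds \emph{unconditionally} by exactness of the long exact sequence; no condition on the connecting map is needed. Moreover, non-finiteness at $p$ means the extension class $\ov c$ is nonzero, so when $\ov\alpha=1$ the connecting map $H^0(k_0)\to H^1(k_0(\chi))$ is \emph{nonzero}, not zero. In fact the paper's diagram chase never invokes non-finiteness; the vanishing $\Phi[\fkp_0]=0$ you highlight is what drives the \emph{next} lemma (on $\loc_w c(n,m)$), not this one.
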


\begin{proof}
The proof is almost the same as that of \cite[Lem.~8]{GP}. 
We use the non-archimedean uniformization introduced in the proof of Lemma \ref{ord res lem} to first prove the analogous
claim for $A$ and the representation $V$. 
The claim for $A_{0}$ and $V_{0}$ then follows from the $(\sO,\fkp)$-minimality of $A$ and the irreducibility of $V$.

Recall that there is a $G_\BQp$-parameterization
$$
0\to X \to T(\ov\BQ_p)\to A(\ov\BQ_p)\to 0,
$$
where $T=\Hom(Y,\mathbb{G}_m)$ is a torus that splits over an at-most-quadratic unramified extension, 
$X$ and $Y$ are free $\BZ$-modules on which $G_\BQp$ acts via $\alpha$ and which are also locally-free $\sO$-modules of rank one. 
In particular, $H^1(K_w,T)$ is a 2-group, and 
$T[\fkp]$ is identified with the (unique) line $k(\chi\ov\alpha^{-1})$ in $A[\fkp]=V$. Let
$$
T'=T/T[\fkp] \ \ \text{and} \ \ A'=A/A[\fkp].
$$
(In \cite{GP} they are denoted by $\fkp^{-1}\otimes_{\CO} T$ and $\fkp^{-1}\otimes_{\CO}A$, respectively.)
We have a commutative diagram:
$$
\xymatrix{ T'(K_w) \ar[r]\ar[d]& H^1(K_w,T[\fkp])\ar[d]  \\
A'(K_w) \ar[r] & H^1(K_w,A[\fkp])  ,}
$$
where the horizontal arrows are the Kummer maps. The cokernel of the top horizontal map injects into $H^1(K_w,T)$, which is a $2$-group.
As $p$ is odd, it follows that the top horizontal map is surjective. The left vertical arrow is also surjective. It follows that the image of $A'(K_w)$ in  $H^1(K_w,A[\fkp])$ is equal to the image of $H^1(K_w,T[\fkp])=H^1(K_w,k(\chi\ov\alpha^{-1}))$. This proves the desired equality.
\end{proof}

The following corollary is immediate: just apply the lemma with $A_{m,0}$ in place of $A_{0}$.

\begin{cor}\label{lem Lp cor}
Let $m\in\Lambda^{',+}$. The image of the Kummer map $A_{m,0}(K_w)/\fkp_{m,0}A_{m,0}(K_w)\subset H^1(K_w,V_0)$
is $\sL_{w}$.
\end{cor}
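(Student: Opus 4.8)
The corollary will follow at once from Lemma~\ref{lem Lp} once we apply that lemma to the auxiliary newform $g_m$ of level $Nm$ (fixed in \S\ref{new at m}) in place of $g$, so the plan has only two steps: verify that Lemma~\ref{lem Lp} applies to $g_m$, and check that it yields precisely the subspace $\sL_w$ already attached to $g$.

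For the first step I would note that, since $m\in\Lambda^{',+}$ is a product of admissible primes, $m$ is prime to $Np$; hence $p\mid\mid Nm$, the factorization $Nm=N^+\cdot N^-m$ is again permissible, and $N^-m$ is a product of an even number of primes. By Lemma~\ref{raise level lem} one has $\ov\rho_{g_m}\cong\ov\rho$ as $k_0$-representations of $G_\BQ$, so $\ov\rho_{g_m}$ is irreducible and not finite at $p$; and by Lemma~\ref{mult one} applied to $g_m$ there is an isomorphism $A_{m,0}[\fkp_{m,0}]\cong V_{m,0}=V_0$ of $G_\BQ$-modules. Thus all the hypotheses in force for Lemma~\ref{lem Lp} hold for $g_m$, and the local Kummer map for $A_{m,0}$ over $K_w$ takes values in $H^1(K_w,V_0)$, just as for $A_0$.

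For the second step, Lemma~\ref{lem Lp} applied to $g_m$ identifies the image of $A_{m,0}(K_w)/\fkp_{m,0}A_{m,0}(K_w)$ in $H^1(K_w,V_0)$ with $\ker\{H^1(K_w,V_0)\to H^1(K_w,k_0(\ov\alpha))\}$. Here the sub-line $k_0(\chi\ov\alpha^{-1})$ and the quotient $k_0(\ov\alpha)$ of $V_0|_{G_{K_w}}$ are intrinsic to the $G_{K_w}$-module $V_0$ and do not depend on the newform used to realize it: by Lemma~\ref{ord res lem}, $k_0(\ov\alpha)$ is the unique unramified Jordan--H\"older quotient of $V_0|_{G_{\BQp}}$, its complement $k_0(\chi\ov\alpha^{-1})$ being ramified since $\chi$ is ramified at $p$. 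Consequently the kernel above is exactly the space $\sL_w$ defined via $A_0$ before Lemma~\ref{lem Lp}, which is the assertion of the corollary. I do not anticipate any real difficulty; the one point deserving a remark is the intrinsic nature of the filtration on $V_0|_{G_{K_w}}$, which is what makes the Kummer image for $A_{m,0}$ literally coincide with $\sL_w$ rather than merely being the ``analogous'' subspace for $g_m$.
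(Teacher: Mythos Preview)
Your proposal is correct and follows exactly the paper's approach: the paper's proof is the single sentence ``just apply the lemma with $A_{m,0}$ in place of $A_{0}$,'' and you carry this out while spelling out the verification of hypotheses and the identification of the resulting subspace with $\sL_w$. Your remark that the filtration $k_0(\chi\ov\alpha^{-1})\subset V_0$ is intrinsic to $V_0|_{G_{K_w}}$ (since $\chi$ is ramified at $p$ while $\ov\alpha$ is not) is a helpful clarification that the paper leaves implicit.
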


\begin{lem}\label{lem c(n) p}
Assume that $\ov{\rho}$ is not finite at $p$. For any $m\in\Lambda^{',+}$ and $n\in\Lambda$,
$$
\loc_w c(n,m)\in \sL_{w}.
$$
\end{lem}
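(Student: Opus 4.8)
The plan is to reduce the statement to a purely local assertion about the image of the global Heegner cohomology class under restriction to $G_{K_w}$, and then to identify that image using Corollary \ref{lem Lp cor}. The class $c(n,m)$ is built from the Heegner points $y_{A_{m,0}}(n)\in A_{m,0}(K[n])$ by applying Kolyvagin's derivative operators and then descending along the Kummer sequence for $A_{m,0}[\fkp_{m,0}]$ over $K[n]/K$. The essential point is that, away from the primes dividing $nm$, the class $c(n,m)$ is everywhere locally in the image of the appropriate local Kummer map; this is exactly the local compatibility recalled at the start of \ref{Heegner-cong} and in \cite[\S4]{Z13}, and in particular it is already known at the prime $w$ above $p$ (since $p\nmid nm$, as $n\in\Lambda$ and $m\in\Lambda'$ consist of primes prime to $Np$). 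So the first step is to note that $\loc_w c(n,m)$ lies in the image of the local Kummer map $A_{m,0}(K_w)/\fkp_{m,0}A_{m,0}(K_w)\hookrightarrow H^1(K_w, V_0)$.

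The second step is then immediate: by Corollary \ref{lem Lp cor}, under the hypothesis that $\ov\rho$ is not finite at $p$ (which we are assuming throughout \ref{Koly class g} and which holds here), the image of that local Kummer map is precisely $\sL_w$. Combining the two steps gives $\loc_w c(n,m)\in\sL_w$, as desired. One should be slightly careful that the derivative class $c(n,m)$ — as opposed to the point $y_{A_{m,0}}(n)$ itself — still lands in the local Kummer image at $w$: the standard argument (Kolyvagin, see \cite[\S4.1]{Z13}) is that for primes $v$ of $K$ not dividing $pnm$, the class $c_M(n)$ restricted to a decomposition group is unramified and hence in the finite/Kummer part; for $v=w$ above $p$ one argues instead via the compatibility of the derivative construction with the Kummer map over the whole tower $K[n]/K$, using that the trace-compatible system of Heegner points consists of genuine rational points of the relevant abelian varieties, so their images under the connecting maps are Kummer classes, and a norm/corestriction argument transfers this to the derived class over $K$. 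This is exactly how the $p\nmid N$ case is handled in \cite{Z13}, and nothing in that argument used the reduction type at $p$.

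The only place where the multiplicative-reduction hypothesis could have caused trouble is in the identification of $\sL_w$ itself — i.e. in knowing that the local condition coming from the Kummer map is the "right" one, namely the kernel of $H^1(K_w,V_0)\to H^1(K_w,k_0(\ov\alpha))$ rather than something smaller. But that identification is precisely the content of Lemma \ref{lem Lp} and its corollary, which we are permitted to assume; there the hypothesis "$\ov\rho$ not finite at $p$" is exactly what forces the top horizontal Kummer map in the displayed diagram to be surjective, so that the local image is all of $\ker\{H^1(K_w,V_0)\to H^1(K_w,k_0(\ov\alpha))\}$. Hence the main (and really only) obstacle has already been dispatched in \ref{loc at p}, and the proof of this lemma is a two-line deduction: the Heegner-derived class is locally Kummer at $w$, and the local Kummer image at $w$ is $\sL_w$. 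I expect the write-up to be correspondingly short, with the bulk of any remaining care going into citing the right statement from \cite{Z13} for the assertion that $\loc_w c(n,m)$ is a Kummer class to begin with.
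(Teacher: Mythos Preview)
There is a genuine gap in your argument. You claim that $\loc_w c(n,m)$ lies in the local Kummer image at $w$ by appealing to the argument in \cite{Z13}, asserting that ``nothing in that argument used the reduction type at $p$.'' This is not correct. The standard Kolyvagin argument shows only that the image $d_w(n,m)$ of $\loc_w c(n,m)$ in $H^1(K_w,A_{m,0})$ dies upon restriction to $H^1(K[n]_{w'},A_{m,0})$ for $w'\mid w$, and hence (since $K[n]/K$ is unramified at $w$) lies in $H^1_{ur}(K_w,A_{m,0})$. In the good reduction case handled in \cite{Z13} this unramified cohomology group vanishes because the component group of the N\'eron model is trivial. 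But here $A_{m,0}$ has multiplicative reduction at $p$, and $H^1_{ur}(K_w,A_{m,0})$ injects into $H^1(K_w,\Phi_{m,0})$ with $\Phi_{m,0}$ the (in general nontrivial) component group; your ``norm/corestriction argument'' does not address this obstruction.

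What saves the situation is precisely the hypothesis that $\ov\rho$ is not finite at $p$: by Corollary~\ref{finite at p} this forces $\Phi_{m,0}[\fkp_{m,0}]=0$, whence $H^1(K_w,\Phi_{m,0})[\fkp_{m,0}]=0$, and since $d_w(n,m)$ is $\fkp_{m,0}$-torsion it must vanish. So the hypothesis is used \emph{twice}: once (via Corollary~\ref{lem Lp cor}) to identify the Kummer image with $\sL_w$, as you correctly note, and once (via Corollary~\ref{finite at p}) to kill the component-group obstruction and thereby place the derived class in the Kummer image in the first place. The paper's proof makes this second use explicit; yours omits it.
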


\begin{proof}
By Corollary \ref{lem Lp cor} it suffices to show that $\loc_w c(n,m) \in A_{m,0}(K_w)/\fkp_{m,0}A_{m,0}(K_w)$. 
We have a commutative diagram
$$
\begin{tikzcd}
& H^1(K, A_{m,0}[\fkp_{m,0}]) \arrow{r}{d}\arrow{d}{\loc_w} & H^1(K,A_{m,0})\arrow{d}{\loc_w} \\
A_{m,0}(K_w)/\fkp_{m,0}A_{m,0}(K_w)\arrow{r} & H^1(K_w,A_{g_m,0}[\fkp_{m,0}]) \arrow{r} & H^1(K_w,A_{m,0}).
\end{tikzcd}
$$
Let $d(n,m)\in H^1(K, A_{m,0})[\fkp_{m,0}]$ be the image of $c(n,m)$ under $d$.
 It suffices to show that the image $d_w(n,m)=\loc_w d(n,m)$ of $\loc_wc(n,m)$ in $H^1(K_w,A_{m,0})$ is zero. 
 
 By construction, the restriction of $c(n,m)$ to $H^1(K[n],A_{m,0}[\fkp_{m,0}])$ belongs to the image of $A_{m,0}(K[n])$,
 from which it follows that the image of $d(n,m)$ in $H^1(K[n],A_{m,0})$ restricts to zero in $H^1(K[n]_u,A_{m,0})$ for 
 any place $u$ of $K[n]$. In particular, the restriction of $d_w(n,m)$ to $H^1(K[n]_{w'},A_{m,0})$ is zero for all $w'\mid w$. 
 As $K[n]/K$ is unramified at $w$, it follows that $d_w(n,m)$ belongs to $H^1_{ur}(K_w,A_{m,0})$, the subgroup of 
 unramified classes.
By \cite[Prop.~I.3.8]{M86}, $H^1_{ur}(K_w,A_{m,0})$ injects into $H^1(K_w,\Phi_{m,0})$,
where $\Phi_{m,0}$ is the component group of the N\'eron model of $A_{m,0}$ over $\BZp$. 
Since $d_w(n,m)$ is $\fkp_{m,0}$-torsion, we conclude that 
$d_w(n,m)=0$ if $H^1(K_w,\Phi_{m,0})[\fkp_{m,0}]=0$. But this last vanishing follows from $\Phi_{m,0}[\fkp_{m,0}]=0$ 
(which holds since $\ov\rho_m\cong \ov\rho$ is not finite at $p$; see Corollary \ref{finite at p}) and the natural surjection 
$H^1(K_w,\Phi_{m,0}[\fkp_{m,0}])\twoheadrightarrow H^1(K_w,\Phi_{m,0})[\fkp_{m,0}]$.
\end{proof}

\subsection{Cohomological congruences}\label{coh cong}

\begin{thm} \label{coh cong thm}
Suppose Hypothesis $\heart$ holds for $(g,\fkp,K)$
with $N^-$ a product of an even number of primes and $\ov\rho$ is not finite at $p$.
Let $m\in \Lambda^{',+}$, and let $q_1$ and $q_2$ be admissible primes not dividing $m$.
Then
$$
\loc_{q_1} c(n,m)\in H^1(K_{q_1},k_0) \ \ \text{and} \ \ \loc_{q_2} c(n,mq_1q_2)\in H^1(K_{q_2},k_0(1)),
$$
and $\loc_{q_1}c(n,m)$ is non-zero if and only if $\loc_{q_2} c(n,mq_1q_2)$ is non-zero. 
\end{thm}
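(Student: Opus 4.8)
The plan is to follow the proof of the corresponding congruence \cite[Thm.~4.3]{Z13} (see also \cite[\S6]{Z13}), which adapts to Shimura curves the explicit reciprocity laws of Bertolini--Darmon \cite{BD-ACMC}, indicating where the new ingredients of \S\ref{Ihara}--\S\ref{multone} are needed. The two membership assertions are immediate from the description of $\loc_q c(n,m)$ at admissible primes recalled above: since $q_1\nmid m$ one gets $\loc_{q_1}c(n,m)\in H^1(K_{q_1},k_0)=H^1_{fin}(K_{q_1},V_0)$, and since $q_2\mid mq_1q_2$ one gets $\loc_{q_2}c(n,mq_1q_2)\in H^1(K_{q_2},k_0(1))$, which projects isomorphically onto $H^1_{sing}(K_{q_2},V_0)$. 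The nonvanishing equivalence will be proved by identifying both localizations, up to $k_0^\times$-scalars, with the image under suitable Hecke-equivariant maps of one and the same element: the class of the level-$n$ Heegner point $x_{N^+,N^-mq_1}(n)$ on the \emph{definite} Shimura set $X_{N^+,N^-mq_1}$, inside $(\CS_{N^+,N^-mq_1}\otimes k_0)$ localized at $\fkm_0$. Since $N^-mq_1$ is a product of an odd number of primes, this Shimura set is the relevant object; it occurs both as the supersingular locus of $X_{N^+,N^-m}$ over $\BF_{q_1}$ and, up to two copies, as the set of components of the Cerednik--Drinfeld special fibre of $X_{N^+,N^-mq_1q_2}$ over $\BF_{q_2}$.

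First I would treat $\loc_{q_1}c(n,m)$. As $q_1\nmid N^-m$ is inert in $K$, the curve $X_{N^+,N^-m}$ has good reduction at $q_1$, and by \cite[Thm.~2.1]{Z13} (recalled in \S\ref{Heegner red}) the Heegner point $x_{N^+,N^-m}(n)$ reduces modulo $q_1$ to the supersingular point $x_{N^+,N^-mq_1}(n)\in X_{N^+,N^-mq_1}$. The admissibility of $q_1$ gives the splitting $V_0\cong k_0\oplus k_0(1)$ as $G_{K_{q_1}}$-modules and, through Lemma \ref{raise level lem}, the level-raised form $g_{mq_1}$ with $\ov\rho_{g_{mq_1}}\cong\ov\rho$. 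Combining these with the Kummer/reduction exact sequence for $A_{m,0}$ at $q_1$, the Eichler--Shimura relation, and the multiplicity one result of Lemma \ref{mult one def} for the $\fkm_0$-localized module of functions on the definite Shimura set, one computes $\loc_{q_1}c(n,m)$ as the image of the class of $x_{N^+,N^-mq_1}(n)$ in $(\CS_{N^+,N^-mq_1}\otimes k_0)_{\fkm_0}$ under a $k_0$-linear map into the one-dimensional space $H^1(K_{q_1},k_0)$. Hence $\loc_{q_1}c(n,m)\neq 0$ if and only if this Shimura-set class and this map are both nonzero. This is the analogue for Shimura curves of the first reciprocity law computation.

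Next I would treat $\loc_{q_2}c(n,mq_1q_2)$. Since $q_2\mid N^-mq_1q_2$, the curve $X_{N^+,N^-mq_1q_2}$ has Cerednik--Drinfeld reduction at $q_2$; the toric part of the special fibre of the N\'eron model of its Jacobian has character group canonically identified, as a Hecke module, with $\CS_{N^+,N^-mq_1}$, and by \cite[Thm.~2.1]{Z13} the Heegner point $x_{N^+,N^-mq_1q_2}(n)$ specializes modulo $q_2$ to the component indexed by $x_{N^+,N^-mq_1}(n)\times\{0\}$. Through Grothendieck's theory of semistable reduction and the associated monodromy pairing, together with the multiplicity one result of Lemma \ref{mult one} applied to $A_{mq_1q_2,0}$, the singular localization $\loc_{q_2}c(n,mq_1q_2)$ is computed as the image of \emph{the same} class of $x_{N^+,N^-mq_1}(n)$ in $(\CS_{N^+,N^-mq_1}\otimes k_0)_{\fkm_0}$ under a $k_0$-linear map into the one-dimensional space $H^1(K_{q_2},k_0(1))$; so $\loc_{q_2}c(n,mq_1q_2)\neq 0$ if and only if this same Shimura-set class and this map are both nonzero. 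This is the analogue of the second reciprocity law.

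Putting the two computations together, both localizations are, up to $k_0^\times$, the image of one and the same element of the one-dimensional $k_0$-space $(\CS_{N^+,N^-mq_1}\otimes k_0)_{\fkm_0}$ under $k_0$-linear maps into one-dimensional $k_0$-spaces, so the equivalence follows once these maps are known to be nonzero (equivalently, injective). Establishing this is the crux: one must know that the relevant $\fkm_0$-localized Hecke modules of Shimura sets and of Jacobians are free of rank one and that the degeneracy and specialization maps do not annihilate the Heegner class. This is where the new input enters — the $\Gamma_1(p)$-version of Ihara's lemma (Corollary \ref{Ihara gamma1}, resting on Lemmas \ref{Ihara-def} and \ref{Ihara with p}) and the multiplicity one Lemmas \ref{mult one} and \ref{mult one def}, all of which, like the existence of the auxiliary forms $g_{mq_1}$ and $g_{mq_1q_2}$ (Lemma \ref{raise level lem}), require the hypothesis that $\ov\rho$ is not finite at $p$. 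I expect the main technical obstacle to be the bookkeeping of the several Hecke algebras involved ($\BT_0(N^+,N^-m)$, $\BT_0(N^+q_1,N^-m)$, and their $q_1$-new and $p$-new quotients) and the matching of their actions on the Shimura-curve side with those on the Shimura-set side; once this is arranged, the Bertolini--Darmon argument underlying \cite[Thm.~4.3]{Z13} carries over with only notational changes.
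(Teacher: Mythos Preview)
Your proposal is correct and follows essentially the same approach as the paper: both reduce to the proof of \cite[Thm.~4.3]{Z13} (itself an adaptation of the Bertolini--Darmon reciprocity laws), with the new Ihara-type lemmas of \S\ref{Ihara} and the multiplicity one results of \S\ref{multone} substituted at the appropriate places. The paper's proof is in fact terser than yours---it simply records that Ihara's lemma (specifically in the form needed to adapt \cite[Thm.~6.2]{BD-ACMC}) enters in deducing the expression for $\loc_{q_1}c(n,m)$, while the multiplicity one results enter in comparing that expression with the one for $\loc_{q_2}c(n,mq_1q_2)$; your account distributes these ingredients slightly differently but identifies all of them and the underlying geometric picture (both localizations computed via the same Heegner class on the definite Shimura set $X_{N^+,N^-mq_1}$) correctly.
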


\begin{proof}
The proof of \cite[Thm.~4.3]{Z13} carries over with just a few modifications.
Those modifications amount to:
\begin{itemize}
\item using the versions of Ihara's lemma proved in \ref{Ihara} when adapting
the arguments from \cite[Thm.~6.2]{BD-ACMC} to deduce the expression
for $\loc_{q_1}c(n,m)$, and 
\item using the multiplicity one results from \ref{multone} in the argument to
compare the expression for $\loc_{q_2} c(n,mq_1q_2)$ with the one for
$\loc_{q_1}c(n,m)$.
\end{itemize}
\end{proof}

\section{Selmer groups and Rank-lowering}

Let $g$ and $\fkp$ be as in \ref{Koly class g} along with all the hypotheses and notation introduced therein.
In particular, $g_m$, $\fkp_m$, and $A_m$ are as in \ref{new at m}.

\subsection{Selmer groups} 
We recall the mod $\fkp_m$ and $\fkp_m$-adic Selmer groups of $A_m$
over a number field $M$.  

For any place $w$ of $M$ let
$$
\sL_{w,A_m} = \im\{A_m(M_w)\stackrel{\delta_w}{\rightarrow} H^1(M_w,A_m[\fkp_m])\}
$$
and
$$
\CL_{w,A_m} = \im\{A_m(M_w)\otimes\BQp/\BZp \stackrel{\delta_w}{\rightarrow} H^1(M_w,A_m[\fkp_m^\infty])\},
$$
where $\delta_w$ is the Kummer map. Note that $\CL_{w,A_m}=0$ if $w\nmid p$. Then 
$$
\Sel_{\fkp_m}(A_m/M) = \{ c\in H^1(M,A_m[\fkp_m]) \ : \ \loc_w c \in \sL_{w,A_m} \ \forall\text{ places $w$ of $M$}\},
$$
and
$$
\Sel_{\fkp_m^\infty}(A_m/M) = \{ c\in H^1(M,A_m[\fkp_m^\infty]) \ : \ \loc_w c \in \CL_{w,A_m} \
 \forall\text{ places $w$ of $M$}\}.
$$
The natural map $\Sel_{\fkp_m}(A_m/M)\rightarrow \Sel_{\fkp_m^\infty}(A_m/M)[\fkp_m]$ is a surjection with kernel the image of
$A_m(M)[\fkp_m]$. In particular, it is an isomorphism for $M=\BQ$ since $A_m(\BQ)[\fkp_m] = A_m[\fkp_m]^{G_\BQ}=0$ by assumption.

The following proposition, which is just \cite[Thm.~5.2]{Z13}, aids in the comparison of these Selmer groups.

\begin{prop}[{\cite[Thm.~5.2]{Z13}}]\label{Sel loc}
Suppose Hypothesis $\heart$ holds for $(g,\fkp,K)$. 
For each place $w$ of $K$, the local condition $\sL_{w,A_m}$ has a $k_0$-rational structure: there
exists a $k_0$-subspace $\sL_{w,m,0}\subset H^1(K_w,V_0)$ such that
$\sL_{w,m,0}\otimes_{k_0} k = \sL_{w,A_m}$ in $H^1(K_w,V_0)\otimes_{k_0}k_m = H^1(K_w,A_m[\fkp_m])$.
Furthermore, for an admissible prime $q\nmid m$,
$$
\sL_{w,m,0} = \sL_{w,{mq},0} \ \ \forall w\neq q, 
$$
$$
\sL_{q,m,0} = H^1(K_q,k_0), \ \  \sL_{q,{mq},0} = H^1(K_q,k_0(1)).
$$
\end{prop}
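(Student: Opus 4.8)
The plan is to follow the proof of \cite[Thm.~5.2]{Z13} essentially line by line; everything carries over, and the only place calling for a different argument is at the primes $w\mid p$, where $A_m$ now has multiplicative rather than good ordinary reduction. First I would dispose of the places away from $p$. At the archimedean place there is nothing to check since $p$ is odd. At a finite place $w\nmid Np$ the condition $\sL_{w,A_m}$ is the unramified subspace $H^1_{ur}(K_w,A_m[\fkp_m])$, and since $\ov\rho_m\cong\ov\rho$ is unramified there one may take $\sL_{w,m,0}=H^1_{ur}(K_w,V_0)$; as $k$ is a free (hence flat) $k_0$-module and continuous cohomology commutes with flat base change, $\sL_{w,m,0}\otimes_{k_0}k=\sL_{w,A_m}$. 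At a place $w\mid N$ not above $p$, the reduction type of $A_m$ at $w$ is determined by $\ov\rho|_{G_{K_w}}$ and by $\ord_\ell(N_{g_m})=\ord_\ell(N)$ (a consequence of the construction in Lemma \ref{raise level lem}), so the $k_0$-descent of $\sL_{w,A_m}$, and the equality $\sL_{w,m,0}=\sL_{w,mq,0}$ for $w\neq q$ at such places, are verified exactly as in \cite[Thm.~5.2]{Z13}, invoking the relevant parts of Hypothesis $\heart$ (part (4) at primes $\ell$ with $\ell^2\mid N^+$, and the ramification of $\ov\rho$ at $\ell\mid N^-$ with $\ell\equiv\pm1\pmod p$, where needed).

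The new input is at a place $w\mid p$, where $\BQ_p\isoarrow K_w$. By Lemma \ref{ord res lem} applied to $g_m$, the representation $A_m[\fkp_m]=V_0\otimes_{k_0}k_m$ sits in an exact sequence $0\to k_m(\chi\ov\alpha^{-1})\to A_m[\fkp_m]\to k_m(\ov\alpha)\to 0$ which is the base change along $k_0\to k_m$ of the $k_0$-sequence $0\to k_0(\chi\ov\alpha^{-1})\to V_0\to k_0(\ov\alpha)\to 0$ recorded in \ref{loc at p} (here the character $\ov\alpha$ attached to $g_m$ coincides with that attached to $g$, since $a_{g_m}(p)\equiv a_g(p)$ modulo a prime above $p$ with both in $\{\pm1\}$ and $p\geq5$ forces $a_{g_m}(p)=a_g(p)$). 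Since $\ov\rho$ is not finite at $p$, Corollary \ref{lem Lp cor} identifies $\sL_{w,A_m}$ with $\ker\{H^1(K_w,A_m[\fkp_m])\to H^1(K_w,k_m(\ov\alpha))\}$; setting $\sL_{w,m,0}:=\ker\{H^1(K_w,V_0)\to H^1(K_w,k_0(\ov\alpha))\}$ and using flat base change once more gives $\sL_{w,m,0}\otimes_{k_0}k=\sL_{w,A_m}$. This description depends only on $\ov\rho|_{G_{K_w}}$, which does not change upon raising the level, so $\sL_{w,m,0}=\sL_{w,mq,0}$ at every $w\mid p$.

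It remains to record, for an admissible prime $q\nmid m$, the statements at $q$ itself. For $A_m$, which has good reduction at $q$, $\sL_{q,A_m}=H^1_{fin}(K_q,A_m[\fkp_m])$, and under the $G_{K_q}$-splitting $V_0=k_0\oplus k_0(1)$ this equals $H^1(K_q,k_0)\otimes_{k_0}k$, so $\sL_{q,m,0}=H^1(K_q,k_0)$. For $A_{mq}$, the construction in Lemma \ref{raise level lem} places $\WD_q(\rho_{g_{mq}})$ on the component of special type, so $A_{mq}$ has multiplicative reduction at $q$; the analogue of Corollary \ref{lem Lp cor} at $q$ (with $\ov\alpha$ now trivial, up to its quadratic twist) then identifies $\sL_{q,A_{mq}}$ with the image of $H^1(K_q,k_m(1))$, that is with $H^1(K_q,k_0(1))\otimes_{k_0}k$, whence $\sL_{q,mq,0}=H^1(K_q,k_0(1))$. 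The main obstacle is confined to the second paragraph: one must check that Lemma \ref{lem Lp} and Corollary \ref{lem Lp cor} genuinely furnish a $k_0$-rational description of the $p$-adic local condition, uniformly across the level-raised forms $g_m$, in place of the good-ordinary analysis used in \cite{Z13}. Once that is in hand, the remaining verifications are precisely those carried out in \cite{Z13}.
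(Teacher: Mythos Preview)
Your proposal is correct and takes essentially the same approach as the paper: follow \cite[Thm.~5.2]{Z13} verbatim at all places away from $p$, and at $w\mid p$ replace the good-ordinary argument by the description of $\sL_{w}$ furnished by Lemma~\ref{lem Lp} and Corollary~\ref{lem Lp cor}. The paper's own proof is in fact a single sentence pointing to exactly this one addition, so your write-up is simply a more fleshed-out version of the same argument.
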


\begin{proof}
The proof in \cite{Z13} goes through with only one addition: the equality of the $k_0$-structures at the primes
above $p$ uses Corollary \ref{lem Lp cor} (in fact, this equality was essentially used in the proof of Lemma \ref{lem c(n) p}).
\end{proof}

\subsection{Rank-lowering}
The following proposition is just \cite[Prop.~5.4]{Z13}, proved using Proposition \ref{Sel loc} and Tate duality; the proof goes through unchanged.
It is a key to the induction arguments used to prove the main results in \cite{Z13} and hence also of this paper.

\begin{prop}[{\cite[Prop.~5.4]{Z13}}]\label{rank lower}
Let $q\nmid m$ be an admissible prime. If $\loc_{q}:\Sel_{\fkp_m}(A_m/K)\rightarrow H_{fin}^1(K_q,A_m[\fkp_m])\cong k_m$ is surjective (equivalently, non-trivial), then 
$$
\dim_{k_{mq}}\Sel_{\fkp_{mq}}(A_{mq}/K) = \dim_{k_m} Sel_{\fkp_m}(A_m/K) - 1.
$$
\end{prop}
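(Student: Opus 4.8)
The plan is to deduce this from Proposition~\ref{Sel loc} together with global (Poitou--Tate) duality, following the proof of \cite[Prop.~5.4]{Z13} essentially verbatim; once the output of Proposition~\ref{Sel loc} is in hand, nothing in the argument is sensitive to the reduction type at $p$. First I would use Proposition~\ref{Sel loc} to descend both Selmer groups to $k_0$. Since $A_m[\fkp_m]\cong V_0\otimes_{k_0}k_m$ and $A_{mq}[\fkp_{mq}]\cong V_0\otimes_{k_0}k_{mq}$, the groups $\Sel_{\fkp_m}(A_m/K)$ and $\Sel_{\fkp_{mq}}(A_{mq}/K)$ are the scalar extensions of $k_0$-subspaces $\Sel_{m,0},\Sel_{mq,0}\subseteq H^1(K,V_0)$ cut out by local conditions $\sL_{w,m,0}$ and $\sL_{w,mq,0}$ that agree for every $w\neq q$, while at $q$ they are the two lines $H^1(K_q,k_0)$ and $H^1(K_q,k_0(1))$ inside $H^1(K_q,V_0)=H^1(K_q,k_0)\oplus H^1(K_q,k_0(1))$. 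In particular the asserted dimension identity is equivalent to $\dim_{k_0}\Sel_{mq,0}=\dim_{k_0}\Sel_{m,0}-1$, so I would work entirely over $k_0$.

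Next I would set $\Sel_{\mathrm{str}}=\Sel_{m,0}\cap\Sel_{mq,0}$, which is the subspace cut out by $\sL_{w,m,0}$ for $w\neq q$ and by $\loc_q=0$ (the two lines at $q$ meeting trivially). Localization at $q$ then yields injections $\Sel_{m,0}/\Sel_{\mathrm{str}}\hookrightarrow H^1(K_q,k_0)$ and $\Sel_{mq,0}/\Sel_{\mathrm{str}}\hookrightarrow H^1(K_q,k_0(1))$ into one-dimensional $k_0$-spaces. Since $q$ is admissible, $H^1_{fin}(K_q,V_0)=H^1(K_q,k_0)$, so the hypothesis says precisely that the first of these injections is onto; hence $\dim_{k_0}\Sel_{m,0}=\dim_{k_0}\Sel_{\mathrm{str}}+1$. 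It therefore remains to prove that $\Sel_{mq,0}=\Sel_{\mathrm{str}}$, i.e.\ that $\loc_q$ vanishes on $\Sel_{mq,0}$.

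For this I would invoke global reciprocity. Because $\det\ov\rho=\chi$ and $\dim_{k_0}V_0=2$, the Weil pairing identifies $V_0$ with $\Hom(V_0,k_0(1))$, so for each place $w$ of $K$ there is a perfect cup-product pairing $\langle\,\cdot\,,\,\cdot\,\rangle_w\colon H^1(K_w,V_0)\times H^1(K_w,V_0)\to H^2(K_w,k_0(1))\cong k_0$, and $\sum_w\langle\loc_w s,\loc_w t\rangle_w=0$ for all $s,t\in H^1(K,V_0)$. Taking $s\in\Sel_{mq,0}$ and $t\in\Sel_{m,0}$: for $w\neq q$ both $\loc_w s$ and $\loc_w t$ lie in $\sL_{w,m,0}=\sL_{w,mq,0}$, which is its own exact annihilator (the Kummer local conditions form a self-orthogonal family away from $q$, as in the proof of Proposition~\ref{Sel loc}), so the $w$-term vanishes; and at $q$, since $q$ is admissible the self-duality interchanges the two $G_{K_q}$-stable lines $k_0$ and $k_0(1)$, so $\langle\,\cdot\,,\,\cdot\,\rangle_q$ restricts to a perfect pairing between $H^1(K_q,k_0)$ and $H^1(K_q,k_0(1))$. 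Consequently $\langle\loc_q s,\loc_q t\rangle_q=0$ for every $t\in\Sel_{m,0}$, and since $\loc_q$ is onto $H^1(K_q,k_0)$ on $\Sel_{m,0}$ by hypothesis, $\loc_q s=0$. Thus $\Sel_{mq,0}\subseteq\Sel_{\mathrm{str}}$; the reverse inclusion is automatic, so $\Sel_{mq,0}=\Sel_{\mathrm{str}}$ and $\dim_{k_0}\Sel_{mq,0}=\dim_{k_0}\Sel_{m,0}-1$, as desired.

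The main obstacle is really not in this argument at all but in its input: the self-orthogonality of the local conditions $\sL_{w,m,0}$ for $w\neq q$, in particular at the primes dividing $N^-m$ and --- the new feature in the multiplicative-reduction setting --- at the primes above $p$, where it relies on Lemma~\ref{lem Lp} and Corollary~\ref{lem Lp cor}. Since all of this has already been packaged into Proposition~\ref{Sel loc}, the proof of \cite[Prop.~5.4]{Z13} carries over without change.
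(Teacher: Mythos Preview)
Your proposal is correct and matches the paper's approach exactly: the paper simply states that this is \cite[Prop.~5.4]{Z13}, proved using Proposition~\ref{Sel loc} and Tate duality, with the proof going through unchanged. You have fleshed out precisely this argument, correctly identifying that once Proposition~\ref{Sel loc} supplies the $k_0$-rational local conditions (including the new input at primes above $p$ via Lemma~\ref{lem Lp} and Corollary~\ref{lem Lp cor}), the global duality argument from \cite{Z13} applies verbatim.
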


The usefulness of the preceding proposition is manifest in the light of the following lemma.

\begin{lem}[{\cite[Lem.~7.3]{Z13}}]\label{rank lower lem} 
Suppose Hypothesis $\club$ holds for $\ov\rho$.
For each class $c\in H^1(K,V_0)$, there exists a positive density of admissible primes $q$ such that $\loc_q c \neq 0$. 
\end{lem}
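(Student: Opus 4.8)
The plan is to produce the primes via the Chebotarev density theorem, following the proof of \cite[Lem.~7.3]{Z13}; since the argument is entirely global, the possible reducibility of $\ov\rho$ at $p$ plays no role, and one may of course assume $c\neq0$. First I would fix the relevant field: let $F_0=\ov\BQ^{\ker\ov\rho}$ and put $F=F_0K$. Because $\ov\rho$ is unramified outside $Np$ while $K/\BQ$ is ramified only at the primes dividing $D$, and $(D,Np)=1$ (indeed $(D,N)=1$, and $p\nmid D$ since $p$ splits in $K$), the fields $F_0$ and $K$ are linearly disjoint over $\BQ$, so $\Gal(F/\BQ)\cong\im\ov\rho\times\Gal(K/\BQ)$, with $\chi$ inflated from $\det\ov\rho$ on the first factor (as $\BQ(\mu_p)\subset F_0$). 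Next I would choose a finite Galois extension $M/\BQ$ containing $K$ through which the continuous cocycle $c$ factors, and set $L=MF$, so that $c$ is inflated from a cocycle $\bar c\colon\Gal(L/K)\to V_0$ and is unramified at every prime of $K$ unramified in $L$.

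The first substantive step is to observe that $\res_Fc\neq0$. The kernel of $H^1(K,V_0)\to H^1(F,V_0)$ is $H^1(\Gal(F/K),V_0)=H^1(\im\ov\rho,V_0)$, which vanishes under Hypothesis $\club$ for $p\geq5$: by Hypothesis $\club$(1),(2) the image of $\ov\rho$ contains a conjugate of $\SL_2(\BF_p)$ acting on $V_0$ through its standard module, on which $H^1$ vanishes for $p\geq5$, and one concludes via restriction--corestriction (the relevant index being prime to $p$). The image of $\res_Fc$ is then an $\im\ov\rho$-stable $k_0$-subspace of $V_0$, hence all of $V_0$ by irreducibility of $\ov\rho$. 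So, writing $H=\Gal(L/F)$ — a normal subgroup of $\Gal(L/\BQ)$ on which $\Gal(L/\BQ)$ acts through $\ov\rho$ and which acts trivially on $V_0$ — the restriction of $\bar c$ to $H$ is a surjection $\iota\colon H\twoheadrightarrow V_0$.

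The heart of the argument is the choice of the Frobenius element. Using Hypothesis $\club$(2) I would fix $g\in\im\ov\rho$ conjugate to $\diag(a,1)$ with $a\in\BF_p^\times\setminus\{\pm1\}$: the essential features are that $1+g$ is invertible on $V_0$ (eigenvalues $1+a$ and $2$, nonzero since $p\geq5$ and $a\neq-1$) while $g-1$ has a one-dimensional kernel (eigenvalues $a-1$ and $0$), so that $g(1+g)$ is invertible and $(g^2-1)V_0$ is a proper $k_0$-subspace. By linear disjointness I can choose $\sigma\in\Gal(L/\BQ)$ with $\sigma|_{F_0}=g$ and $\sigma|_K\neq1$. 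For $\tau\in H$, writing $t=\iota(\tau)$, a short cocycle computation — using $\bar c|_H=\iota$ and the triviality of the action of $H$ on $V_0$ — gives
\[
\bar c\bigl((\sigma\tau)^2\bigr)=\bar c(\sigma^2)+g(1+g)\,t,
\]
while $(\sigma\tau)^2$ and $\sigma^2$ have the same image $g^2$ in $\im\ov\rho$. Since $g(1+g)$ is invertible and $\iota$ is onto, the right-hand side runs over all of $V_0$ as $\tau$ varies; in particular, for all $\tau$ outside a proper affine subspace, $\bar c((\sigma\tau)^2)\notin(g^2-1)V_0$. I would then invoke Chebotarev: for each such $\tau$, the primes $q$ unramified in $L$ whose Frobenius class in $\Gal(L/\BQ)$ is that of $\sigma\tau$ form a set of positive density, and any such $q$ is prime to $NDp$, inert in $K$ (as $(\sigma\tau)|_K=\sigma|_K\neq1$), and admissible (as $q\equiv\chi(\sigma\tau)=a\pmod p$, whence $p\nmid q^2-1$, and $a(q)\equiv\Trace\ov\rho(\sigma\tau)=a+1\equiv q+1\pmod{\fkp}$); moreover $c$ is unramified at such a $q$, so picking $\lambda\mid q$ in $L$ with $\Frob_\lambda=\sigma\tau$ and $w=\lambda\cap K$ gives $\Frob_w=(\sigma\tau)^2$, and under the identification $H^1_{ur}(K_w,V_0)\cong V_0/(g^2-1)V_0$ the class $\loc_qc$ is represented by $\bar c((\sigma\tau)^2)\neq0$. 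Taking the union over the admissible $\sigma\tau$ produced above yields a positive density of admissible primes $q$ with $\loc_qc\neq0$.

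I expect the main obstacle to be the bookkeeping in the last two steps: one must be able to encode inertness in $K$, admissibility, and non-vanishing of $\loc_qc$ in a single Chebotarev condition, and — the genuinely delicate part — verify that the leftover freedom of translating $\sigma$ by $H$ really does move $\loc_qc$ off zero. The latter rests squarely on the invertibility of $1+g$ for an element $g\sim\diag(a,1)$ provided by Hypothesis $\club$(2); an element of the complementary type required there, $g\sim\diag(b,-1)$, would render $1+g$ singular and the translation useless, so it is precisely the $\diag(a,1)$-type elements that are exploited in this lemma.
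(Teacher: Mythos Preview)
The paper does not give a proof of this lemma; it simply records it as \cite[Lem.~7.3]{Z13}. So there is nothing to compare your approach to in the present paper. That said, your write-up contains a genuine gap in the cocycle computation, and your closing remark about the $\diag(b,-1)$ elements is mistaken.

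The identity $\bar c((\sigma\tau)^2)=\bar c(\sigma^2)+g(1+g)t$ requires knowing $\iota(\sigma\tau\sigma^{-1})$ as a function of $t=\iota(\tau)$, i.e.\ that conjugation by $\sigma$ on $H$ descends via $\iota$ to the action of $g$ on $V_0$. You established $\Gal(F/K)$-equivariance of $\iota$ from the fact that $\res_Fc$ is $\Gal(F/K)$-invariant, but $\sigma\notin\Gal(L/K)$, and nothing you wrote gives $\sigma$-equivariance. Concretely, writing $(\sigma\tau)^2=(\sigma\tau\sigma^{-1})(\sigma^2\tau\sigma^{-2})\sigma^2$ one finds
\[
\bar c((\sigma\tau)^2)=\iota(\sigma\tau\sigma^{-1})+g^2t+\bar c(\sigma^2),
\]
and the first term is not determined by $t$ alone unless $\ker\iota$ is $\sigma$-stable. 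The standard fix is to reduce to the case where $c$ lies in a $\Gal(K/\BQ)$-eigenspace: since $\loc_q$ is $\Gal(K/\BQ)$-equivariant and the $\pm$-eigenspaces of $H^1(K_q,V_0)$ meet only in $0$, it suffices to prove the lemma for an eigenvector $c^\varepsilon$. For such $c$, the complex-conjugate class $\sigma^*c$ equals $\varepsilon c$, and restricting to $H$ gives $\iota(\sigma\tau\sigma^{-1})=\varepsilon g\,t$; hence
\[
\bar c((\sigma\tau)^2)=\bar c(\sigma^2)+g(g+\varepsilon)\,t.
\]
When $\varepsilon=+1$ your choice $g\sim\diag(a,1)$ with $a\notin\{\pm1\}$ makes $g+1$ invertible and the argument goes through. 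When $\varepsilon=-1$ one needs $g-1$ invertible, which \emph{fails} for $g\sim\diag(a,1)$ but holds for $g\sim\diag(b,-1)$ with $b\notin\{\pm1\}$; such $g$ still yields admissible primes since $\det g=-b\notin\{\pm1\}$ and $(\det g+1)^2-(\Tr g)^2=(1-b)^2-(b-1)^2=0$. So the two types of elements in Hypothesis~$\club$(2) are not interchangeable: each handles one eigenspace, and both are needed.

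A small additional point: ``$q$ unramified in $L$'' does not by itself force $q\nmid N$, since $\ov\rho$ may be unramified at some primes dividing $N$; either enlarge $L$ to force ramification at those primes or simply discard the finitely many $q\mid N$, which does not affect density.
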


For use in the inductive arguments employed to prove the main result, we record the following lemma, which replaces
\cite[Lem.~8.1]{Z13}.

\begin{lem}\label{indep classes} Suppose $\ov\rho$ is irreducible and its image contains a nontrivial homothety.
Let $c_1$, $c_2$ be two $k_0$-linear independent elements in $H^1(K,V_0)$. Then there exists a positive density of primes 
$\ell\in \Lambda$ such that 
$$
\loc_\ell c_i \neq 0,\quad i=1,2.
$$
\end{lem}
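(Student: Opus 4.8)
The statement is essentially a two-variable Chebotarev argument, carried out inside the field cut out by the Galois action on $V_0$. The plan is to reduce the problem to a statement about the image of the global Galois representation $\bar\rho$ together with the cyclotomic character, and then apply the Chebotarev density theorem to a suitable conjugacy-stable subset of that image.

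\smallskip

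First I would recall how admissibility and Kolyvagin-type primes are detected Galois-theoretically: a prime $\ell\nmid NDp$ lies in $\Lambda$ (resp.\ is admissible) precisely when $\ell$ is inert in $K$ and $\mathrm{Frob}_\ell$ acting on $V_0$ has a prescribed behaviour (the relevant congruences on $\ell+1$ and $a(\ell)$, which translate into the characteristic polynomial of $\bar\rho(\mathrm{Frob}_\ell)$ having roots $\{\pm1\}$, equivalently $\bar\rho(\mathrm{Frob}_\ell)$ conjugate to $\mathrm{diag}[1,-1]$ together with $\ell\equiv -1\pmod p$ — compare the description of $\Lambda$ and $\Lambda'$ in the excerpt and the definition of the Kolyvagin index). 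So the condition ``$\ell\in\Lambda$'' is a Frobenius condition in $\Gal(L/\BQ)$, where $L$ is the compositum of $K$, $\BQ(\mu_p)$, and the fixed field of $\ker\bar\rho$. The condition ``$\mathrm{loc}_\ell c_i\neq 0$'' is, via the inflation-restriction sequence, a condition on the restriction of $c_i$ to a larger field $L_i := L(c_i)$ (the field trivializing the cocycle), namely that a certain Frobenius-twisted evaluation of $c_i$ is nonzero in $H^1$ of the residue field; this is exactly the mechanism used in Lemma \ref{rank lower lem} (= \cite[Lem.~7.3]{Z13}).

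\smallskip

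The key step is then the following: I would show that the two ``error'' field extensions $L_1/L$ and $L_2/L$ attached to $c_1$ and $c_2$ are, after intersecting with the field $L(c_1,c_2)$, sufficiently independent that one can simultaneously impose the nonvanishing of $\mathrm{loc}_\ell c_1$ and of $\mathrm{loc}_\ell c_2$. Concretely, $\Gal(L(c_1,c_2)/L)$ embeds into $V_0\oplus V_0$ as a $\Gal(L/\BQ)$-module, and because $c_1,c_2$ are $k_0$-linearly independent in $H^1(K,V_0)$ — and because the hypothesis guarantees that $\bar\rho$ is irreducible with image containing a nontrivial homothety, so that $V_0$ has no proper $\Gal(L/\BQ)$-submodule and $H^0(\Gal(L/\BQ),V_0)=0$ — the image of $\Gal(L(c_1,c_2)/L)$ surjects onto each factor and in fact is large enough (it contains the diagonal-free part) that the simultaneous nonvanishing condition defines a nonempty, conjugation-stable subset of $\Gal(L(c_1,c_2)/\BQ)$ lying over the Frobenius class that forces $\ell\in\Lambda$. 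Here the homothety in the image of $\bar\rho$ is what lets one adjust the $V_0$-component of Frobenius freely while keeping the projection to $\Gal(L/\BQ)$ — hence the membership $\ell\in\Lambda$ and the inertness in $K$ — fixed; this is the same use of a homothety as in Lemma \ref{indep classes}'s analogue \cite[Lem.~8.1]{Z13}, now with the $p\nmid N$ hypotheses on $\bar\rho|_{G_{\BQ_p}}$ no longer available but not needed, since the argument only sees the global image.

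\smallskip

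Finally I would invoke the Chebotarev density theorem for $L(c_1,c_2)/\BQ$ applied to this nonempty conjugation-stable subset, concluding that a positive density of primes $\ell$ satisfy $\ell\in\Lambda$ and $\mathrm{loc}_\ell c_i\neq 0$ for $i=1,2$ simultaneously. The main obstacle is the linear-independence bookkeeping in the previous paragraph: one must check that $k_0$-linear independence of $c_1,c_2$ in $H^1(K,V_0)$ really does force the Galois image in $V_0\oplus V_0$ to be large — the subtlety is that two classes could restrict to the ``same'' extension after a $\Gal(L/\BQ)$-equivariant twist, and one has to rule this out using exactly that the only $\Gal(L/\BQ)$-equivariant endomorphisms of $V_0$ relevant here are scalars (which again is where irreducibility and the homothety enter), so that a nontrivial relation among the $\Gal(L(c_1,c_2)/L)$-images would descend to a nontrivial $k_0$-linear relation among $c_1,c_2$ in cohomology, a contradiction. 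Once that is in hand, the density conclusion is immediate and matches the statement.
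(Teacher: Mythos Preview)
Your plan is correct and is essentially the same Chebotarev argument the paper invokes: the paper reduces first to the case where $c_1,c_2$ are $\Gal(K/\BQ)$-eigenvectors and then appeals directly to the proof of \cite[Lem.~1.6.2]{Ho-KS}, noting that the nontrivial homothety replaces Howard's Hypothesis~H.2 by giving $H^1(K,V_0)\simeq H^1(L,V_0)^{\Gal(L/K)}$, and that Howard's argument does not actually require the two eigenvalues to differ. Your sketch unpacks that same argument rather than citing it; one small point to tighten is that the submodule of $V_0$ cut out by the image of $c_i|_{G_L}$ is $\Gal(L/K)$-stable (not merely $\Gal(L/\BQ)$-stable), so the surjectivity onto each factor uses irreducibility of $\bar\rho|_{G_K}$ together with the vanishing $H^1(\Gal(L/K),V_0)=0$ coming from the homothety --- exactly the role of Howard's H.1 and H.2 that the paper checks.
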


\begin{proof}
We may assume that both $c_1,c_2$ are eigenvectors under the action of $\Gal(K/\BQ)$.
Then the lemma follows from the proof of \cite[Lem.~1.6.2]{Ho-KS}.
Indeed, we only need to consider the case in {\it loc.~cit.}~where $k=1$.
Since the image of $\ov\rho$ contains a nontrivial homothety,
$H^1(K,V)\simeq H^1(L,V)^{\Gal(L/K)}$, where $L$ is as in {\it loc.~cit.}; this replaces Hypothesis H.2 in the proof.
Hypotheses H.1 and H.5(a) are similarly satisfied under the hypotheses of the lemma.
Finally, we need only note that the proof of \cite[Lem.~1.6.2]{Ho-KS} does not need to assume that $c_1$ and $c_2$ have
the different eigenvalues under the action of $\Gal(K/\BQ)$.
\end{proof}

\section{Special value formulas}

Let $g$ and $\fkp$ be as in \ref{newform-gal}.

\subsection{Tamagawa factors}
Let $M$ be a number field and $w$ a finite place of $M$.
Given an abelian variety $\CA$ over $M_w$, let $\sA$ be its N\'eron model and let 
$\Phi_\CA= \pi_0(\sA_0)$ be the group of connected components of the special fibre $\sA_0$ of $\sA$; this is 
an \'etale abelian group scheme over $\BF_w$, or, equivalently, a finite abelian group with a $\Gal(\ov \BF_w/\BF_w)$-action. Then
$$
\Phi_\CA(\BF_w) = H^0(\BF_w,\pi_0(\sA_0)).
$$
Suppose the ring $\sO$ acts on $\CA/M_w$. Then it also acts on $\Phi_\CA(\BF_w)$, and we set
$$
t(\CA/M_w) = \lg_{\sO_{\fkp}} \Phi_\CA(\BF_w)_{\fkp}.
$$
Note that 
$$
\#\Phi_\CA(\BF_w)_{\fkp} = (\# k)^{t(\CA/M_w)}.
$$

\begin{lem}\label{tam coh lem}
Suppose $w\nmid p$. Then 
$$
t(\CA/M_w) = \lg_{\sO_{\fkp}} H^1_{ur}(M_w, \CA[\fkp^\infty])=\lg_{\sO_{\fkp}} H^1(\BF_w, \CA[\fkp^\infty]^{I_w}).
$$
\end{lem}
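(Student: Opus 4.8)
The plan is to prove Lemma \ref{tam coh lem} by combining two standard facts: first, a local cohomological interpretation of the Tamagawa number of an abelian variety with semistable (or arbitrary) reduction away from $p$; and second, the inflation-restriction identification of unramified classes.

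For the first equality, I would begin by recalling that for $w \nmid p$ the local Kummer sequence and the criterion for unramifiedness yield $H^1_{ur}(M_w, \CA[\fkp^\infty]) \cong H^1(\Gal(\ov\BF_w/\BF_w), \CA(\ov M_w^{ur})[\fkp^\infty])$. Since $\Gal(\ov\BF_w/\BF_w) \cong \wh\BZ$ is of cohomological dimension one with a topological generator $\Frob_w$, for a cofinitely generated $\BZ_\ell$-module $T$ (here $\ell = p$, coprime to the residue characteristic) one has $H^1(\wh\BZ, T) \cong T_{\Frob_w}$, the coinvariants, which is (non-canonically) isomorphic to $T^{\Frob_w} = H^0$. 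Applying this with $T = \CA(\ov M_w^{ur})[\fkp^\infty]$ gives that $H^1_{ur}(M_w,\CA[\fkp^\infty])$ and $\CA(\ov M_w^{ur})[\fkp^\infty]^{\Frob_w}$ have the same $\sO_\fkp$-length. Next I would invoke the standard description of the N\'eron model: by \cite[Prop.~I.3.8]{M86} (which is already cited elsewhere in the paper), there is an exact sequence relating $\CA(\ov M_w^{ur})[\fkp^\infty]^{\Frob_w}$ to the $\fkp$-part of the component group $\Phi_\CA(\BF_w)$, the point being that the $\fkp^\infty$-divisible part of $\CA(\ov M_w^{ur})$ comes from the connected N\'eron model (which has trivial $H^1$ over $\wh\BZ$ by Lang's theorem applied to its special fibre, a smooth connected group scheme over a finite field). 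More precisely, one gets $H^1_{ur}(M_w,\CA[\fkp^\infty]) \cong \Phi_\CA(\BF_w)_\fkp$, whence the lengths agree. This is exactly the computation behind $t(\CA/M_w) = \lg_{\sO_\fkp} H^1_{ur}(M_w,\CA[\fkp^\infty])$.

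For the second equality, I would use that $\CA[\fkp^\infty]$ is unramified away from primes of bad reduction and $p$; the inflation-restriction sequence for $I_w \triangleleft G_{M_w}$ gives
$$
H^1_{ur}(M_w,\CA[\fkp^\infty]) = H^1(G_{M_w}/I_w, \CA[\fkp^\infty]^{I_w}) = H^1(\BF_w, \CA[\fkp^\infty]^{I_w}),
$$
using that $G_{M_w}/I_w \cong G_{\BF_w} = \Gal(\ov\BF_w/\BF_w)$ as recalled in the notation section. This is essentially a definitional unwinding once one knows $H^1(I_w, \CA[\fkp^\infty]^{I_w})$ plays no role because we restrict to the inflation of classes from the quotient; no real content beyond the identification already set up.

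The main obstacle — really the only subtle point — is the precise identification of $H^1$ over $\wh\BZ$ of the full $\fkp$-adic Tate module / torsion of $\CA(\ov M_w^{ur})$ with the $\fkp$-part of $\Phi_\CA(\BF_w)$: one must be careful that it is the component group, not the full special fibre, that contributes, and that the divisible (connected) part genuinely contributes nothing to $H^1$ (this is Lang's theorem for connected groups over finite fields, combined with the structure of the N\'eron model and compatibility of Frobenius-coinvariants with the short exact sequence $0 \to \sA^0 \to \sA \to \Phi_\CA \to 0$). Once that is in hand, both equalities follow by length-counting, using $\#\Phi_\CA(\BF_w)_\fkp = (\#k)^{t(\CA/M_w)}$ from the setup. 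I would present this as a short argument citing \cite[Prop.~I.3.8]{M86} and Lang's theorem, since the result is well known; the novelty of the present paper lies elsewhere.
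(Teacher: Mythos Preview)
Your proposal is correct and follows essentially the same route as the paper: both arguments pass through \cite[Prop.~I.3.8]{M86} to identify the unramified $H^1$ with the cohomology of the N\'eron model points, use Lang's theorem (implicitly or explicitly) to kill the contribution of the connected component, and invoke the equality of lengths of $H^0$ and $H^1$ for finite modules over $\wh\BZ$ to match with $\Phi_\CA(\BF_w)_\fkp$. The paper presents this as a single chain of length equalities rather than treating the two equalities separately, but the content is the same.
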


\begin{proof} We have 
\begin{equation*} \begin{split}
\lg_{\sO_{\fkp}} \Phi_\CA(\BF_w)_\fkp & = \lg_{\sO_{\fkp}} \Phi_\CA(\BF_w)[\fkp^\infty]  \\
& = \lg_{\sO_{\fkp}} H^0(\BF_w, \pi_0(\sA_0))[\fkp^\infty] \\
& = \lg_{\sO_{\fkp}} H^1(\BF_w, \pi_0(\sA_0))[\fkp^\infty] \\
& = \lg_{\sO_{\fkp}} H^1(\BF_w, \sA(\ov M_w^{I_w}))[\fkp^\infty] \\
& = \lg_{\sO_{\fkp}} H^1(\BF_w, \CA(\ov M_w)^{I_w})[\fkp^\infty] \\
& = \lg_{\sO_{\fkp}} H^1(\BF_w, \CA[\fkp^\infty]^{I_w}).
\end{split}
\end{equation*}
The first two equalities follow from the definitions, the third equality follows from $\pi_0(\sA_0)$ having finite order, 
the fourth follows from \cite[Prop.~I.3.8]{M86}, the fifth from the basic properties of N\'eron models, 
and the sixth is an easy consequence of $\fkp\mid p$ and the assumption that~$w\nmid~p$.
\end{proof}

We record a simple corollary for the abelian variety $A$ associated with the newform $g$:
\begin{cor}\label{tam coh cor} 
Let $A^K$ be the $K$-twist of $A$. Let $\ell$ be a rational prime.
$$
\sum_{w|\ell} t(A/K_w) = t(A/\BQ_\ell) + t(A^K/\BQ_\ell).
$$
\end{cor}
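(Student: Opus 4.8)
The plan is to argue case by case according to the splitting behaviour of $\ell$ in $K$, converting Tamagawa factors into Galois cohomology via Lemma~\ref{tam coh lem} in the non-split cases. Note first that $A^K$ carries an $\sO$-action (the quadratic twist commutes with $\End_\BQ A$), so all three quantities are defined, and that the $\fkp^\infty$-torsion of $A^K/\BQ_\ell$ is $A[\fkp^\infty]\otimes\chi_{K,\ell}$ as a $G_{\BQ_\ell}$-module, where $\chi_{K,\ell}=\chi_K|_{G_{\BQ_\ell}}$.

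First I would dispose of the case in which $\ell$ splits in $K$; since $p$ splits in $K$ by hypothesis~(a), this includes $\ell=p$. Then there are two places $w_1,w_2$ of $K$ above $\ell$ with $K_{w_i}=\BQ_\ell$, so $t(A/K_{w_i})=t(A/\BQ_\ell)$; and $\chi_{K,\ell}$ is trivial, so $A^K\cong A$ already over $\BQ_\ell$ and $t(A^K/\BQ_\ell)=t(A/\BQ_\ell)$. Hence $\sum_{w\mid\ell}t(A/K_w)=2t(A/\BQ_\ell)=t(A/\BQ_\ell)+t(A^K/\BQ_\ell)$, with no cohomology needed. Next, if $\ell\mid D$ then $\ell\nmid N$ (as $(D,N)=1$), so $A$ has good reduction at $\ell$ and hence at the unique place $w\mid\ell$, and all three component groups vanish; alternatively, via Lemma~\ref{tam coh lem}, $A[\fkp^\infty]$ is unramified at $\ell$ while $(A[\fkp^\infty]\otimes\chi_{K,\ell})^{I_\ell}=0$ because $\chi_{K,\ell}|_{I_\ell}$ has order $2$ and $A[\fkp^\infty]$ is $\fkp$-divisible with $p$ odd. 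Either way both sides are $0$.

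The main case is $\ell$ inert in $K$; then $\ell\neq p$, there is a unique place $w\mid\ell$, the extension $K_w/\BQ_\ell$ is unramified so $I_w=I_\ell$, $\BF_w=\BF_{\ell^2}$, and $G_{\BF_{\ell^2}}$ is the index-two subgroup of $G_{\BF_\ell}$. Setting $N=A[\fkp^\infty]^{I_\ell}$, viewed as a $G_{\BF_\ell}$-module, Lemma~\ref{tam coh lem} gives $t(A/K_w)=\lg_{\sO_\fkp}H^1(G_{\BF_{\ell^2}},N)=\lg_{\sO_\fkp}H^1\!\big(G_{\BF_\ell},\Ind_{G_{\BF_{\ell^2}}}^{G_{\BF_\ell}}N\big)$ by Shapiro's lemma. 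Since $p$ is odd, $\sO_\fkp[G_{\BF_\ell}/G_{\BF_{\ell^2}}]\cong\sO_\fkp\oplus\sO_\fkp(\chi_{K,\ell})$ with $\chi_{K,\ell}$ the unramified quadratic character attached to $w$, so $\Ind_{G_{\BF_{\ell^2}}}^{G_{\BF_\ell}}N\cong N\oplus N(\chi_{K,\ell})$; and because $\chi_{K,\ell}$ is unramified, $N(\chi_{K,\ell})=(A[\fkp^\infty]\otimes\chi_{K,\ell})^{I_\ell}=A^K[\fkp^\infty]^{I_\ell}$. Applying Lemma~\ref{tam coh lem} again to $A/\BQ_\ell$ and $A^K/\BQ_\ell$ yields $t(A/K_w)=t(A/\BQ_\ell)+t(A^K/\BQ_\ell)$, which is the claim since $w$ is the only place above $\ell$.

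The only genuine subtlety is this inert case: one must match the character cutting out $K_w/\BQ_\ell$ with the one occurring in the decomposition of the induced module, and exploit that $p$ is odd both there (so the permutation module splits) and in the ramified case (so the ramified twist has no $I_\ell$-invariants). The remaining steps are routine bookkeeping with Néron models and the cohomological formula of Lemma~\ref{tam coh lem}.
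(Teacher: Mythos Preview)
Your argument is correct and follows the same idea as the paper: convert Tamagawa factors to unramified cohomology via Lemma~\ref{tam coh lem} and then decompose under the quadratic twist using that $p$ is odd. The paper's proof is slightly more streamlined in that it treats the inert and ramified cases uniformly via the restriction isomorphism $H^1(\BF_\ell,A[\fkp^\infty]^{I_\ell})\oplus H^1(\BF_\ell,A^K[\fkp^\infty]^{I_\ell})\isoarrow H^1(\BF_w,A[\fkp^\infty]^{I_w})$ (your Shapiro computation is exactly the mechanism behind this in the inert case, and the eigenspace splitting of $A[\fkp^\infty]^{I_w}$ under $I_\ell/I_w$ gives it in the ramified case), without invoking $(D,N)=1$. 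One small imprecision: in your ramified case the claim that ``all three component groups vanish'' is not quite right for $A^K$, whose N\'eron component group at $\ell$ can be a nontrivial $2$-group; only its $\fkp$-part vanishes, which is what your alternative cohomological argument correctly shows.
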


\begin{proof} Since $A^K \cong A$ over $K$, there is nothing to prove if $\ell$ splits in $K$.
If $\ell$ is inert or ramified in $K$, let $w$ be the unique place of $K$ over $\ell$. Then, since $p\neq 2$, the restriction map 
$$
H^1(\BF_\ell,A[\fkp^\infty]^{I_\ell}) \oplus H^1(\BF_\ell,A^K[\fkp^\infty]^{I_\ell}) \rightarrow H^1(\BF_w,A[\fkp^\infty]^{I_w})
$$
is an isomorphism, from which the desired equality follows as $\ell\neq p$ (since $p$ splits in $K$).
\end{proof}

We also record the following consequence of Lemma \ref{finite at p} for the Tamagawa factors at primes above $p$.

\begin{lem}\label{tam p zero}
If $\ov\rho$ is not finite at $p$, then $t(A/K_w) = 0$ for all $w\mid p$.
\end{lem}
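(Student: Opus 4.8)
The plan is to reduce the statement directly to the component-group criterion for finiteness at $p$ established above. Since $p$ splits in $K$, for any prime $w\mid p$ of $K$ the completion $K_w$ is isomorphic to $\BQ_p$, with residue field $\BF_w = \BF_p$, and the N\'eron model of $A/K_w$ is the base change along the (here trivial) unramified extension $\BZ_p\to\sO_{K_w}$ of the N\'eron model $\sA/\BZ_p$ of $A/\BQ_p$. Hence the component group $\Phi_A$ over $\BF_w$, together with its $\sO$-action, is canonically identified with the group $\Phi$ appearing in the lemma preceding Corollary \ref{finite at p}, and $t(A/K_w) = \lg_{\sO_\fkp}\Phi(\BF_p)_\fkp$.

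First I would observe that, by hypothesis, $\ov\rho$ is not finite at $p$; since $V_0$ is finite at $p$ if and only if $V = A[\fkp]$ is (as noted just after Corollary \ref{finite at p}, using that $V$ is irreducible in the cases of interest so $V\cong V_0\otimes_{k_0}k$), it follows that $A[\fkp]$ is not finite at $p$. Then the lemma preceding Corollary \ref{finite at p} gives $\Phi[\fkp] = 0$; that is, the finite abelian group $\Phi(\ov\BF_p)$ has no nonzero $\fkp$-torsion. Consequently its $\fkp$-primary part $\Phi(\ov\BF_p)_\fkp$ vanishes, and a fortiori $\Phi(\BF_p)_\fkp\subseteq\Phi(\ov\BF_p)_\fkp$ is zero. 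Therefore $t(A/K_w) = \lg_{\sO_\fkp}\Phi(\BF_p)_\fkp = 0$, as claimed.

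There is no genuine obstacle here: the substantive input was already packaged in the lemma preceding Corollary \ref{finite at p}, and the remaining points are routine — that $K_w = \BQ_p$ with the same residue field, so the N\'eron model and its component group are unchanged after base change; and that a finite module over the discrete valuation ring $\sO_\fkp$ whose $\fkp$-torsion is trivial must itself be zero.
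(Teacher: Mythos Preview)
Your proof is correct and is essentially what the paper has in mind: the paper states the lemma without proof, merely noting it as a consequence of Corollary~\ref{finite at p} (equivalently, the lemma preceding it), and you have simply spelled out the routine details --- that $K_w=\BQ_p$ since $p$ splits, and that $\Phi[\fkp]=0$ forces $\Phi(\BF_p)_\fkp=0$. You could streamline slightly by invoking Corollary~\ref{finite at p} directly rather than passing through the $V$-versus-$V_0$ comparison, but this is cosmetic.
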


\subsection{The canonical periods of $g$}\label{Periods}

We recall the definition of the periods $\Omega^\pm_{g,\Gamma}$ associated to $g$, $\fkp$, and the congruence subgroup
$\Gamma = \Gamma_0(N)$ or $\Gamma_1(N)$.  

Let $\BT_\Gamma$ be the  Hecke algebra for level $\Gamma$
generated over $\sO_{(\fkp)}$ by the actions of the usual Hecke operators on the space $S_2(\Gamma)$ of weight $2$ cuspforms of level $\Gamma$;
if $\Gamma = \Gamma_0(N)$, then $\BT_\Gamma$ is just $\BT_{N,1}\otimes\sO_{(\fkp)}$. Let $\phi_\Gamma:\BT_\Gamma\twoheadrightarrow \sO_{(\fkp)}$ 
be the $\sO_{(\fkp)}$-linear homomorphism giving the
action of the Hecke operators on the newform $g$. Then we have a factorization
$$
\phi_{\Gamma_1(N)}: \BT_{\Gamma_1(N)}\twoheadrightarrow \BT_{\Gamma_0(N)}
\stackrel{\phi_{\Gamma_0(N)}}{\twoheadrightarrow} \sO_{(\fkp)},
$$
where the first arrow is the canonical surjection (induced by the inclusion $S_2(\Gamma_0(N))\subset S_2(\Gamma_1(N))$.
Let $\fkP_\Gamma = \ker(\phi_\Gamma)$.

Recall the Eichler-Shimura map:
\begin{equation*}
\mathrm{Per}_\Gamma: S_2(\Gamma) \hookrightarrow H^1(\Gamma,\BC), \ \ \ 
f\mapsto (\gamma \mapsto \int_\tau^{\gamma(\tau)} f(z)dz).
\end{equation*}
Let $\omega_{g,\Gamma} = \mathrm{Per}_\Gamma(g)$. We decompose $\omega_{g,\Gamma}$ as $\omega_{g,\Gamma} = 
\omega_{g,\Gamma}^++\omega_{g,\Gamma}^-$,
 according to the decomposition $ H^1(\Gamma,\BC)=H^1(\Gamma,\BC)^+\oplus H^1(\Gamma,\BC)^-$ under
 the action of conjugation by $\left( \smallmatrix 1&\\&-1\endsmallmatrix\right)$, the superscript `$\pm$' denoting the subspace on
 which the action is just $\pm 1$ (this corresponds to the action of complex conjugation on the Betti cohomology of the modular curve).
 The $\fkP_\Gamma$-torsion $H^1(\Gamma,\sO_{(\fkp)})^\pm[\fkP_\Gamma]$ is a free $\sO_{(\fkp)}$-module of rank one
 (the superscript `$\pm$' means the same as before), and we fix
 an $\sO_{(\fkp)}$-generator $\gamma^\pm_{g,\Gamma}$; this is uniquely determined up to an $\sO_{(\fkp)}^\times$-multiple.
 We define the periods $\Omega^\pm_{g,\Gamma}\in\BC^\times$ (up to $\sO_{(\fkp)}^\times$-multiple) by 
\begin{align}\label{eqn omega g}
\omega_{g,\Gamma}^\pm=\Omega_{g,\Gamma}^\pm \gamma_{g,\Gamma}^\pm.
\end{align}

The periods $\Omega_{g,\Gamma_1(N)}^\pm$ are often used in the literature (for example, in \cite{SU} and \cite{Smult}), while 
we will need to use the periods $\Omega_{g,\Gamma_0(N)}^\pm$. The following lemma makes the passage between results 
using these periods easy in many cases.
 
\begin{lem}\label{per lemma}
If $\ov\rho$ is irreducible, then, up to $\sO_{(\fkp)}^\times$-multiple,
$$
\Omega_{g,\Gamma_0(N)}^\pm=\Omega_{g,\Gamma_1(N)}^\pm.
$$
\end{lem}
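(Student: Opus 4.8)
The plan is to compare the two Hecke algebras $\BT_{\Gamma_0(N)}$ and $\BT_{\Gamma_1(N)}$ together with their respective integral cohomology eigenspaces, and to show that the natural maps between them become isomorphisms after localizing at the maximal ideal attached to $g$ (equivalently, after passing to $\fkP_\Gamma$-torsion). First I would use the inclusion $S_2(\Gamma_0(N))\subset S_2(\Gamma_1(N))$ and the surjection $\phi_{\Gamma_1(N)}:\BT_{\Gamma_1(N)}\twoheadrightarrow\BT_{\Gamma_0(N)}$ already recorded in \ref{Periods}: its kernel is generated by the elements $\langle d\rangle-1$ for $(d,N)=1$. Since $g$ has trivial nebentypus, the maximal ideal $\fkm\subset\BT_{\Gamma_1(N)}$ corresponding to $(g,\fkp)$ contains all the $\langle d\rangle-1$, so localizing at $\fkm$ identifies $\BT_{\Gamma_1(N),\fkm}$ with $\BT_{\Gamma_0(N),\fkm}$ — or at least, the two local rings have the same $\fkP$-torsion, which is all that the period comparison needs. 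The key point is then that the induced map on integral cohomology realizes the $\fkP_{\Gamma_0(N)}$-eigenspace as a direct summand (over $\sO_{(\fkp)}$) of the $\fkP_{\Gamma_1(N)}$-eigenspace and that the complementary summand vanishes after localization.

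Concretely, the second step is to pass from $H^1(\Gamma_1(N),\sO_{(\fkp)})$ to $H^1(\Gamma_0(N),\sO_{(\fkp)})$. The group $\Delta=(\BZ/N\BZ)^\times$ acts on $H^1(\Gamma_1(N),\sO_{(\fkp)})$ through the diamond operators, with $\Gamma_0(N)/\Gamma_1(N)\cong\Delta$, and $H^1(\Gamma_0(N),\sO_{(\fkp)})$ is the $\Delta$-invariant part (up to controlled $p'$-torsion, since $p\nmid\#\Delta$ would be needed, but in general $\#\Delta$ is prime to $p$ only sometimes — here one should instead argue via the trace/corestriction map, which composed with restriction is multiplication by $\#\Delta$; the part of $H^1(\Gamma_1(N))$ on which $g$'s Hecke eigensystem lives is where the $\langle d\rangle$ act trivially, so on that part the two cohomology groups agree). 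Then I would check that the Eichler–Shimura periods are compatible: the pullback of $\omega_{g,\Gamma_1(N)}$ under $S_2(\Gamma_0(N))\hookrightarrow S_2(\Gamma_1(N))$ is $\omega_{g,\Gamma_0(N)}$ by functoriality of the $\mathrm{Per}$ maps, and the generator $\gamma^\pm_{g,\Gamma_1(N)}$ maps to a generator of $H^1(\Gamma_0(N),\sO_{(\fkp)})^\pm[\fkP_{\Gamma_0(N)}]$ precisely because that $\fkP$-torsion module is free of rank one and the map is an isomorphism on it. Dividing, the ratios $\Omega^\pm_{g,\Gamma_0(N)}$ and $\Omega^\pm_{g,\Gamma_1(N)}$ then agree up to $\sO_{(\fkp)}^\times$.

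The main obstacle, and the place where irreducibility of $\ov\rho$ enters, is establishing that the relevant integral $\fkP$-torsion cohomology module is free of rank one and that the map between the $\Gamma_1$ and $\Gamma_0$ versions is an isomorphism — i.e., a multiplicity-one statement for Betti cohomology. This is where one invokes the standard fact (going back to Mazur, and in this generality using that $\ov\rho$ is absolutely irreducible, $p\geq 5$) that $H^1(\Gamma_0(N),\sO_{(\fkp)})_{\fkm}$ is free of rank two over $\BT_{\Gamma_0(N),\fkm}$, hence the $\pm$-eigenspaces are free of rank one over $\sO_{(\fkp)}$ after quotienting by $\fkP_{\Gamma_0(N)}$; the same for $\Gamma_1(N)$, and then the degeneracy map between them, being surjective with source and target free of the same rank, is an isomorphism. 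I would spell this out by reducing to the classical Mazur multiplicity-one result (as cited, e.g., via \cite{MR}), noting that the torsion in the relevant cohomology is controlled because $\ov\rho$ is irreducible, so no boundary contributions or Eisenstein phenomena intervene. Once multiplicity one is in hand the rest is bookkeeping: track the Eichler–Shimura class and its integral generator through the degeneracy map and read off the equality of periods up to units.
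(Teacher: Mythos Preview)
Your overall strategy---compare the $\fkP$-torsion eigenspaces in $H^1(\Gamma_0(N),\sO_{(\fkp)})$ and $H^1(\Gamma_1(N),\sO_{(\fkp)})$ via the natural degeneracy map and check that the Eichler--Shimura classes correspond---is the right shape, and the bookkeeping you outline at the end is exactly what finishes the argument. But the mechanism you use to identify the two $\fkP$-torsion modules is both heavier than what the paper does and has a gap.

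The paper's argument is short and does not invoke multiplicity one at all. It uses that the Shimura subgroup $\Sigma_N=\ker(J_0(N)\to J_1(N))$ is Eisenstein in the sense of Ribet. Since $\ov\rho$ is irreducible, the maximal ideal $\fkm$ is non-Eisenstein, so localizing at $\fkm$ kills $\Sigma_N$ and one obtains an injection $H^1(\Gamma_0(N),R)_\fkm\hookrightarrow H^1(\Gamma_1(N),R)_\fkm$ for $R=\sO_{(\fkp)}$ \emph{and} for $R=k$. Injectivity with both coefficients forces the map on the rank-one $\fkP$-torsion $\pm$-eigenspaces (already known to be free of rank one from the setup) to be an isomorphism, and the identification of the classes $\omega_{g,\Gamma}$ is immediate. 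Irreducibility of $\ov\rho$ enters only to ensure $\fkm$ avoids the Eisenstein locus.

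Your route instead tries to get the comparison from freeness of $H^1(\Gamma,\sO_{(\fkp)})_\fkm$ over the Hecke algebra, for both $\Gamma=\Gamma_0(N)$ and $\Gamma_1(N)$. Two issues. First, you assert the degeneracy map is ``surjective with source and target free of the same rank,'' but you never justify the surjectivity (or injectivity); knowing both sides are free of rank one does not by itself give an isomorphism. The $\Delta$-trace idea you float would supply this, but as you yourself note it breaks when $p\mid\phi(N)$, and you do not replace it with anything. Second, multiplicity one for $H^1(\Gamma_1(N),\sO_{(\fkp)})_\fkm$ is not an off-the-shelf input here (in the paper's setting $p\mid\mid N$ and $\ov\rho$ need not be finite at $p$); deducing it from the $\Gamma_0$-case again requires exactly the comparison you are trying to prove. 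The Eisenstein-Shimura-subgroup argument sidesteps both problems in one line.
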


\begin{proof}
Let $\Sigma_N$ be the Shimura subgroup of $J_1(N)$, i.e., the kernel of the natural map of Jacobians $J_0(N)\to J_1(N)$
 induced (via $\Pic^0$ functoriality) by the natural degeneracy map $X_1(N)\to X_0(N)$. 
 The group $\Sigma_N$ is Eisenstein (\cite{R-Shimura}). Let $\fkm$ be the kernel of the composition 
 $\BT_{\Gamma_1(N)} \to \sO_{(\fkp)}\to \sO_{(\fkp)}/\fkp = k$; the image of $\fkm$ in $\BT_{\Gamma_0(N)}$
 is just the similarly defined maximal ideal.
 As $\Sigma_N$ is Eisenstein, it follows from the irreducibility of $\ov\rho = \ov\rho_\fkm$ 
 (which is just the Galois representation associated with $\fkm$) 
 and the duality between cohomology groups and the Tate-modules of Jacobians
 that after localization at $\fkm$ we have an injection
$$
H^1(\Gamma_0(N), R)_{\fkm}\hookrightarrow H^1(\Gamma_1(N),R)_{\fkm}
$$
for $R=\sO_{(\fkp)}$ and for $R=k$. 
Note that $H^1(\Gamma,R)[\mathfrak{P}_\Gamma]^\pm=H^1(\Gamma,R)_\fkm[\mathfrak{P}_\Gamma]^\pm$.
This implies that the injection above for $R=\sO_{(\fkp)}$ induces an isomorphism of free $R$-modules of rank one:
$$
H^1(\Gamma_0(N),R)[\mathfrak{P}_{\Gamma_0(N)}]^\pm \simeq H^1(\Gamma_1(N),R)[\mathfrak{P}_{\Gamma_1(N)}]^\pm.
$$
In particular, the classes $\gamma_{g,\Gamma}^\pm$ are identified up to $\sO_{(\fkp)}^\times$-multiple.
Furthermore, under the natural inclusion $H^1(\Gamma_0(N), \BC)\incl H^1(\Gamma_1(N),\BC)$  
(which extends\footnote{Implicit in the consideration of $g$ as a holomorphic function on $\fkh$ is
an embedding of $F$ into $\BC$.} that for $R=\sO_{(\fkp)}$)
the cohomology classes $\omega_{g,\Gamma}$ attached to $g$ are identified.  
The lemma then follows immediately from \eqref{eqn omega g}.  
\end{proof}

In light of the preceding lemma, when $\ov\rho$ is irreducible we let
\begin{equation}\label{eqn omega can}
\Omega_g^\pm  = \Omega_{g,\Gamma_0(N)}^\pm.
\end{equation}

\subsection{Congruence numbers and congruence periods}

For any factorization $N=N^+ N^-$ with $N^+$ and $N^-$ coprime, $p\mid\mid N^+$, and $N^-$ squarefree, we 
let $\eta_g(N^+,N^-) \in \sO_{\fkp}$ be a generator of the new-at-$N^-$-congruence ideal for $g$.
In particular, let $\pi:\BT_0(N^+,N^-)\otimes_\BZp\sO_\fkp \twoheadrightarrow \sO_{\fkp}$ be the $\sO_\fkp$-linear map giving the Hecke action on the eigenform 
$g$ and let $\fkm = \pi^{-1}(\fkp)$ be the associated maximal ideal. Let $\BT(N^+,N^-)_\fkm$ be the localization of
$\BT_0(N^+,N^-)\otimes_\BZp\sO_\fkp$ at $\fkm$; this is just the completion of $\BT_{N^+,N^-}\otimes\sO_{(\fkp)}$ at the maximal ideal
also denoted $\fkm$ in the proof of Lemma \ref{per lemma}. Then
$$
(\eta_g(N^+,N^-))=\pi(\mathrm{Ann}_{\BT(N^+,N^-)_\fkm}(\ker\pi)).
$$ 
We set
$$
\eta_g= \eta_g(N,1).
$$
This is just the usual congruence number for $g$.

We now fix an isomorphism $\BC\cong\ov\BQ_p$ so that valuation induced on the subfield $F\subset\BC$ is 
that associated with $\fkp$.
The congruence period (or Hida period) of $g$ is then defined to be 
$$
\Omega_g^{cong}=\frac{\pair{g,g}}{\eta_g} \in \ov\BQ_p^\times.
$$
where 
$$
\pair{g,g'}=4\pi^2 i \int_{X_0(N)(\BC)}\omega_g\wedge \ov{\omega_{g'}} = 8\pi^2 \int_{\Gamma_0(N)\backslash\fkh} g(z)\ov{g'(z)}dxdy
$$ 
is the Petersson inner product.
Here, $\omega_g$ denotes the holomorphic differential on the modular curve $X_0(N)$ that is the unique holomorphic
extension of the differential on the open modular curve $Y_0(N)$ 
that pulls back to $g(z)dz$ under the usual complex uniformization $\Gamma_0(N)\backslash\fkh \isoarrow Y_0(N)$.
This is identified with the $\omega_g$ in \ref{Periods} via the deRham map
$$
H^0(X_0(N)_{/\BC},\Omega^1)\isoarrow H^1(X_0(N),\BC)^{1,0}\hookrightarrow H^1(Y_0(N),\BC) = H^1(\Gamma_0(N),\BC).
$$

\begin{lem}\label{period comp lem}
If $\ov\rho$ is irreducible, then, up to $\sO_\fkp^\times$-multiple,
$$
\Omega_g^{cong}= i (2\pi i)^2\Omega_g^+\Omega_g^-.
$$
\end{lem}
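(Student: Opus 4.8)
The plan is to compare the two period quantities by relating both of them to the $\fkp$-adic valuation of a single cohomological object: the module $H^1(X_0(N),\sO_{(\fkp)})[\fkP_{\Gamma_0(N)}]$, equipped with the cup-product pairing. Writing $\Gamma = \Gamma_0(N)$ throughout, recall that by Lemma \ref{per lemma} we may use the periods $\Omega_g^\pm = \Omega_{g,\Gamma}^\pm$ attached to $\Gamma_0(N)$, and that $\gamma_g^\pm$ is an $\sO_{(\fkp)}$-generator of $H^1(\Gamma,\sO_{(\fkp)})^\pm[\fkP_\Gamma]$. The congruence number $\eta_g$ is, up to units, a generator of the congruence ideal, and under the irreducibility of $\ov\rho$ the localization $\BT(N,1)_\fkm$ is Gorenstein (this is where I would invoke multiplicity one, i.e.\ Lemma \ref{mult one}, together with the argument of Wiles/Lenstra giving Gorenstein-ness from $\dim_k J(X_0(N))[\fkm] = 2$); hence $\eta_g$ also measures the ``congruence module'' cut out by $g$ inside $H^1(X_0(N),\sO_{(\fkp)})_\fkm$.

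First I would write the Petersson product in cohomological terms. Under the de Rham comparison recalled at the end of the subsection, $\omega_g \in H^1(\Gamma,\BC)$ decomposes as $\omega_g^+ + \omega_g^-$, and a standard computation (Poincar\'e duality on the open modular curve, or the Shimura formula relating $\pair{g,g}$ to the cup product of $\omega_g$ with its complex conjugate) gives, up to an explicit power of $2\pi i$ and a sign/unit,
$$
\pair{g,g} \;\doteq\; i\,(2\pi i)^2 \cdot \bigl(\omega_g^+ \cup \omega_g^-\bigr)_{H^1(\Gamma,\BC)},
$$
where $\cup$ denotes the cup-product pairing $H^1(\Gamma,\BC)^+ \times H^1(\Gamma,\BC)^- \to \BC$ (the $+/-$ parts being the isotropic/non-degenerately-paired pieces). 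Now substitute $\omega_g^\pm = \Omega_g^\pm \gamma_g^\pm$ to get $\pair{g,g} \doteq i(2\pi i)^2 \,\Omega_g^+\Omega_g^-\cdot (\gamma_g^+\cup\gamma_g^-)$. Dividing by $\eta_g$ and comparing with the definition $\Omega_g^{cong} = \pair{g,g}/\eta_g$, the lemma reduces to the assertion that the cup-product pairing value $\gamma_g^+\cup\gamma_g^-$ equals $\eta_g$ up to an $\sO_\fkp^\times$-multiple.

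The heart of the matter, and the step I expect to be the main obstacle, is precisely this last identity $\gamma_g^+\cup\gamma_g^- \doteq \eta_g$: that the discriminant of the cup-product pairing restricted to the rank-two $\sO_{(\fkp)}$-module $H^1(\Gamma,\sO_{(\fkp)})[\fkP_\Gamma] = H^1(\Gamma,\sO_{(\fkp)})^+[\fkP_\Gamma]\oplus H^1(\Gamma,\sO_{(\fkp)})^-[\fkP_\Gamma]$ generates the congruence ideal. This is a well-known consequence of the Gorenstein property of $\BT(N,1)_\fkm$ together with the perfectness of the Poincar\'e-duality pairing on $H^1(X_0(N),\sO_{(\fkp)})_\fkm$ (equivalently, on $\Ta_\fkp J_0(N)_\fkm$): one has $H^1(\Gamma,\sO_{(\fkp)})[\fkP_\Gamma] \cong \Hom_{\BT_\fkm}(\BT_\fkm/\fkP, \BT_\fkm)$ via the pairing, and under the Gorenstein isomorphism $\BT_\fkm \cong \Hom_{\sO_{(\fkp)}}(\BT_\fkm,\sO_{(\fkp)})$ the congruence module $\BT_\fkm/\fkP \otimes \ldots$ has order $\#(\sO_{(\fkp)}/\eta_g)$. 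I would cite the relevant statements from Hida's work and from the Mazur--Ribet / Wiles circle of ideas (the analog appears, e.g., in Hida's formula $\langle g,g\rangle \doteq (\text{period})\cdot\eta_g$ and in the ``$\eta = $ congruence'' results underlying \cite{SU}), rather than reprove them. The one point requiring care for the present paper is that $p\mid\mid N$: the forms $g$ here have multiplicative reduction at $p$, so I must note that the Gorenstein and multiplicity-one inputs are available in this setting — which is exactly what Lemma \ref{mult one} and the multiplicity-one discussion in \ref{multone} provide (for $\ov\rho$ irreducible and not finite at $p$) — so that the standard argument applies verbatim. The bookkeeping of the power of $2\pi i$ and the factor of $i$ is routine and I would not belabor it beyond identifying it with the stated $i(2\pi i)^2$.
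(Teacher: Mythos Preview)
Your approach is correct and essentially the same as the paper's: both reduce the lemma to the identity that the cup/Poincar\'e pairing of the integral generators $\gamma_g^+,\gamma_g^-$ generates the congruence ideal $(\eta_g)$, and both combine this with the cohomological expression for $\pair{g,g}$ in terms of $\omega_g\wedge\ov{\omega_g}$. The paper simply cites \cite[Cor.~4.19 and Thm.~4.20]{DDT} for these facts (packaging the computation via the Atkin--Lehner involution and a change-of-basis matrix) rather than unpacking the Gorenstein/perfect-pairing mechanism you outline; your sketch is precisely what underlies those DDT results. One small remark: the lemma as stated assumes only that $\ov\rho$ is irreducible, so you should not lean on Lemma~\ref{mult one} (which also requires not-finite-at-$p$); the relevant multiplicity-one/Gorenstein input here is for $J_0(N)$, i.e.\ the $N^-=1$ case, which is the Mazur--Ribet theorem invoked in DDT.
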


\begin{proof} 
This is essentially proved in \cite[\S4.4]{DDT}. More precisely, via the natural Hecke-equivariant identification $H^1(X_0(N),\sO_\fkp)_\fkm = H^1(\Gamma_0(N),\sO_\fkp)_\fkm$,
we can take the $\{x,y\}$ of \cite[Cor.~4.19]{DDT} to be $\{\gamma_{g,\Gamma_0(N)}^+,\gamma_{g,\Gamma_0(N)}^-\}$. 
Then, by \cite[Cor.~4.19]{DDT} and the first displayed equation in the proof of \cite[Thm.~4.20]{DDT}, 
$$
\eta_g \det A = \pair{\gamma_{g,\Gamma_0(N)}^+,\gamma_{g,\Gamma_0(N)}^-}_H\det A = \int_{X_0(N)} 2\pi i\omega_g\wedge\ov{2\pi i\omega_{wg}} 
 = -i\pair{g,wg} = \pm i\pair{g,g},
 $$
 where $A= 2\pi i \left(\smallmatrix \Omega_g^+ & \Omega_g^+ \\ \Omega_g^- & - \Omega_g^-\endsmallmatrix\right)\in \GL_2(\BC)$ is such that
 $2\pi i (\omega_g,\bar \omega_g) = (\gamma_{g,\Gamma_0(N)}^+,\gamma_{g,\Gamma_0(N)}^-)A$, $w = w_N$ is the Atkin-Lehner involution, and 
 we have used that $g$ has real Fourier coefficients (so $g^c = g$) and $wg = \pm g$. As $\det A = 4\pi^2\Omega_g^+\Omega_g^-$,
 the lemma follows.
 \end{proof}

For comparison of various special value formulas we record the following relation between the canonical periods of
$g$ and its $K$-quadratic twist $g^K$ (that is, the newform associated with the twist of $g$ by the quadratic
Dirichlet character $\chi_K$ associated with the extension $K/\BQ$). Note that $F_{g^K} = F_{g}$. If we also define 
the canonical periods of $g^K$ with the respect to the prime $\fkp$ of $\sO_{g^K} = \sO_g$, then we have the following.

\begin{lem}\label{can period twist} If $\ov\rho$ is irreducible, then, up to $\sO_{\fkp}^\times$-multiple,
$$
\Omega_{g^K}^\pm = \Omega_{g}^\mp.
$$
\end{lem}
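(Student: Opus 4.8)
The plan is to relate the plus/minus periods of $g$ and $g^K$ through the classical fact that twisting by the quadratic character $\chi_K$ swaps the two eigenspaces for complex conjugation on modular symbols. Concretely, write $D$ for the (odd) discriminant of $K$ and $\chi_K$ for the associated quadratic Dirichlet character, so that $\chi_K(-1) = -1$. There is a standard twisting operator on modular symbols: if $\gamma_{g}^{\pm} \in H^1(\Gamma_0(N),\sO_{(\fkp)})^{\pm}[\fkP_{\Gamma_0(N)}]$ is an $\sO_{(\fkp)}$-generator as in \ref{Periods}, one constructs from it, by summing Atkin--Lehner-style twists $\sum_{a \bmod D} \chi_K(a)\,\gamma_g|\left(\smallmatrix 1 & a/D \\ 0 & 1\endsmallmatrix\right)$, a modular symbol for $g^K$ at level $N D^2$ (or its $N$-new part, which is what carries $g^K$). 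Because $\chi_K(-1)=-1$, this operator intertwines the action of $\left(\smallmatrix 1 & \\ & -1\endsmallmatrix\right)$ with a sign flip, so it sends the $\pm$-eigenspace for $g$ into the $\mp$-eigenspace for $g^K$.

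First I would set up the twisting map carefully at the level of integral (or $\sO_{(\fkp)}$-) cohomology, using that $(D,N)=1$ so that the level of $g^K$ is $ND^2$ with $g^K$ being $N$-new, and check that it carries the $\fkP_{\Gamma_0(N)}$-eigenspace onto the corresponding eigenspace for $g^K$. Second, I would invoke the irreducibility of $\ov\rho$ (equivalently $\ov\rho_{g^K}$, since the twist of an irreducible representation is irreducible): by Lemma \ref{per lemma} the $\Gamma_0$ and $\Gamma_1$ periods agree up to $\sO_{(\fkp)}^\times$, and — more to the point — irreducibility forces the relevant $\fkP$-eigenspaces of integral cohomology to be free of rank one over $\sO_{(\fkp)}$ and the twisting map to be an isomorphism on them (no Eisenstein contamination, and no multiplicity issues), so $\gamma_{g^K}^{\pm}$ is identified, up to $\sO_{(\fkp)}^\times$, with the image of $\gamma_g^{\mp}$. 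Third, on the transcendental side, the same twisting operator applied to the Eichler--Shimura class $\omega_{g,\Gamma}$ produces $\omega_{g^K,\Gamma}$ up to a factor which is an explicit Gauss-sum times a rational number: $g^K = g\otimes\chi_K$ has $\mathrm{Per}(g^K)$ equal to the $\chi_K$-twist of $\mathrm{Per}(g)$, and the Gauss sum $\tau(\chi_K)$ satisfies $\tau(\chi_K)^2 = \chi_K(-1)D = -D$, hence $\tau(\chi_K) \in \sqrt{-D}\,\BZ \subset \sO_{(\fkp)}^\times$-span once we note $(D,p)=1$ (because $p\nmid D$, as $p$ splits in $K$, $p$ is prime to the discriminant). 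Combining: $\omega_{g^K}^{\pm} = (\text{unit})\cdot\omega_g^{\mp}$ and $\gamma_{g^K}^{\pm} = (\text{unit})\cdot\gamma_g^{\mp}$, so from \eqref{eqn omega g} the ratios satisfy $\Omega_{g^K}^{\pm} = (\text{unit})\cdot\Omega_g^{\mp}$, which is the claim.

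The main obstacle I expect is bookkeeping rather than conceptual: pinning down precisely that the twisting operator is an isomorphism on the $\fkP$-eigenspaces with integral (not merely rational) coefficients, and that the discrepancy it introduces between $\omega_{g^K}^{\pm}$ and $\omega_g^{\mp}$ lies in $\sO_{(\fkp)}^\times$ and not merely in $F^\times$. The first point is handled by the irreducibility of $\ov\rho$ exactly as in the proof of Lemma \ref{per lemma} (the $\fkP_\Gamma$-eigenspace depends only on the localization at $\fkm$, where everything is free of rank one by multiplicity one for the Jacobian, and $g\mapsto g^K$ matches these localizations). The second is handled by the Gauss-sum computation together with $p\nmid D$: the only ``new'' transcendental factor is $\tau(\chi_K)$, whose square is $-D$, a $\fkp$-adic unit, so it contributes a unit to the period ratio. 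Everything else in the twisting identity is a ratio of $\sO_{(\fkp)}$-generators of rank-one modules, hence a unit. I would present the argument as a close variant of the proof of Lemma \ref{per lemma}, citing the standard references for the twisting of modular symbols by Dirichlet characters (e.g.\ the discussion of twisting in \cite{MTT}) and for the Gauss sum relation.
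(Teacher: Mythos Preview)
Your proposal correctly sets up the twisting operator and the Gauss-sum computation (with $\tau(\chi_K)\in\BZ_p^\times$ since $p$ splits in $K$), and this does yield one divisibility: $\Omega_{g^K}^\pm = a_\mp\,\Omega_g^\mp$ with $a_\mp\in\sO_\fkp$. The gap is your claim that the twisting operator is an \emph{isomorphism} on the integral $\fkP$-eigenspaces. Multiplicity one only tells you each eigenspace is free of rank one over $\sO_{(\fkp)}$; it says nothing about whether a particular map between two such rank-one modules is surjective. Your analogy with Lemma~\ref{per lemma} breaks down: there the kernel of $J_0(N)\to J_1(N)$ is the Shimura subgroup, which is Eisenstein, so localization at a non-Eisenstein $\fkm$ kills it and forces the map to be an isomorphism. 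No analogous statement is available for the twisting map $H^1(\Gamma_0(N),\sO_{(\fkp)})\to H^1(\Gamma_0(ND^2),\sO_{(\fkp)})$; a priori it may send a generator to a proper $\fkp$-multiple of a generator. (Trying to run the argument symmetrically from $g^K$ back to $g$ runs into level problems: $g^K$ has level $ND^2$, twisting again lands in level $ND^4$, and the periods of $g$ computed there need not a priori agree with those at level $N$.)

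The paper closes this gap by a genuinely different second step. After obtaining $a_\mp\in\sO_\fkp$ from the twisting map (a step that, as the paper notes, does \emph{not} use irreducibility), it invokes Lemma~\ref{period comp lem} for both $g$ and $g^K$: up to $\sO_\fkp^\times$, $\Omega_g^{cong}=\Omega_g^+\Omega_g^-$ and $\Omega_{g^K}^{cong}=\Omega_{g^K}^+\Omega_{g^K}^-$. A computation with congruence ideals (via \cite{DDT}) shows that $\eta_{g^K}/\eta_g=\pair{g^K,g^K}/\pair{g,g}$ up to $\sO_\fkp^\times$, hence $\Omega_g^{cong}=\Omega_{g^K}^{cong}$ up to $\sO_\fkp^\times$. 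Combining gives $a_+a_-\in\sO_\fkp^\times$, so each $a_\pm$ is a unit. This is where irreducibility of $\ov\rho$ is actually used (through Lemma~\ref{period comp lem}), and it is the missing ingredient in your sketch.
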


\noindent  While this is certainly well-known to experts, we include a proof for lack of a convenient reference. Our proof makes use of Lemma \ref{period comp lem}.
For an ordinary form $g$, which includes the cases considered for the main results of this paper, 
it is possible to avoid this - and the assumption that $\ov\rho$ is irreducible - and use instead properties of the $p$-adic $L$-function of $g$, but we do not go into this here.

\begin{proof} We first show that $\Omega_{g^K}^\pm$ is an $\sO_{\fkp}$-multiple of $\Omega_{g}^\mp$; this part does not use that $\ov\rho$ is irreducible.
For any $\BZ$-algebra $R$ there is a homomorphism
\begin{equation*}\begin{split}
H^1(\Gamma_0(N),R) & \rightarrow H^1(\Gamma_0(ND^2),R), \\
\varphi\mapsto (\gamma\mapsto \sum_{a\in(\BZ/D\BZ)^\times} \chi_K & (a)\varphi(\left(\smallmatrix 1 & -a/D \\ 0 & 1 \endsmallmatrix\right)\gamma\left(\smallmatrix 1 & a/D \\ 0 & 1\endsmallmatrix\right))).
\end{split}
\end{equation*}
Under this homomorphism, for $R=\BC$, $\omega_g$ (resp.~$\omega_g^\mp$) gets mapped to $\tau(\chi_K)\omega_{g^K}$ (resp.~$\tau(\chi_K)\omega_{g^K}^\pm$; since $\chi_K$
is odd the $\mp$-submodule gets mapped into the $\pm$-submodule), where
$\tau(\chi_K)$ is the Gauss sum attached to $\chi_K$. For $R=\sO_{(\fkp)}$, $\gamma_{g,\Gamma_0(N)}^\mp$ gets mapped to an $\sO_{(\fkp)}$-multiple of $\gamma_{g^K,\Gamma_0(ND^2)}^\pm$. 
It follows that
$\Omega_{g^K}^\pm$ is an $\sO_{(\fkp)}$-multiple of $\Omega_g^\mp/\tau(\chi_K)$. However, since $p$ splits in $K$ and $\tau(\chi_K)^2 = -D$, $\tau(\chi_K)\in \BZp^\times$.
It follows that $\Omega_{g^K}^\pm$ is an $\sO_\fkp$-multiple of $\Omega_g^\mp$: $\Omega_{g^K}^\pm = a_\mp \Omega_g^\mp$.

To complete the proof of the lemma, it suffices to show that $a_+a_- \in \sO_\fkp^\times$. To show this last inclusion, we exploit Lemma \ref{period comp lem}.
Since $(D,pN)=1$, twisting by $\chi_K$ shows that the congruence ideal $(\eta_{g^K})$ is just the congruence ideal 
measuring mod $p$ congruences between $g$ and forms of level $\Gamma_0(N\prod_{\ell\mid D}\ell^2)$, in the sense that the $n$th Fourier coefficients for $(n,D)=1$ are congruent.
It then follows easily from the calculations used to prove \cite[Thm.~4.20]{DDT} that if $\ov\rho$ is irreducible, then, up to $\sO_\fkp^\times$-multiples, 
$$
\frac{\eta_{g^K}}{\eta_g} = \prod_{\ell\mid D}(1-\alpha(\ell)^2\ell^{-2})(1-\beta(\ell)\ell^{-2})(1-\ell^{-1}) = \frac{\pair{g^K,g^K}}{\pair{g,g}}.
$$
Here $\alpha(\ell)$ and $\beta(\ell)$ are the roots of the Hecke polynomial $x^2-a(\ell)x+\ell$.
It follows in particular that, up to $\sO_\fkp^\times$-multiple, $\Omega_g^{cong} = \Omega_{g^K}^{cong}$. That $a_+a_-$ belongs to $\sO_\fkp^\times$ then follows from this together with Lemma \ref{period comp lem} applied to both $g$ and $g^K$.
\end{proof}

\subsection{The BSD formula in rank zero}\label{BSD0}
We recall the $\fkp$-part of the BSD formula for $L(g,1)$:

\begin{thm}[{\cite[Thm.~B]{Smult}}]\label{thm BSD 0} Suppose
\begin{itemize}
\item[\rm (a)] $\ov\rho$ is irreducible;
\item[\rm (b)] there exists a prime $q\neq p$ such that $q\mid\mid N$ and $\ov\rho$ is ramified at $q$;
\item[\rm (c)] if $p\mid\mid N$ and $a(p)=1$, then the Mazur-Tate-Teitelbaum $\fkL$-invariant $\fkL(\CV)$ of $\rho$ is non-zero.
\end{itemize}
Then
$$
\ord_\fkp\left(\frac{L(g,1)}{2\pi i \Omega_g^+}\right) = \lth_{\sO_{\fkp}} \Sel_{\fkp^\infty}(A/\BQ) + \sum_{\ell|N} t(A/\BQ_\ell).
$$
\end{thm}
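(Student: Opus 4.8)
The statement is \cite[Thm.~B]{Smult}. To indicate how the hypotheses enter and why the period here agrees with the one used in \emph{loc.~cit.}, the plan is to recall the structure of the proof in \cite{Smult}: it rests on the cyclotomic Iwasawa main conjecture for $g$---proved in \cite{SU} and extended to the multiplicative reduction setting in \cite{Smult}---together with a descent argument, so the bulk of the work is done there and I only describe the outline and the one point at which the reduction type at $p$ matters.

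First, hypothesis (a) supplies the irreducibility of $\ov\rho$ and (b) supplies a prime $\ell\mid\mid N$, $\ell\neq p$, at which $\ov\rho$ is ramified; these are among the standing hypotheses under which the main conjecture is available, so the characteristic ideal of the Pontryagin dual of $\Sel_{\fkp^\infty}(A/\BQ_\infty)$ over the cyclotomic $\BZ_p$-extension $\BQ_\infty$ is generated, up to units of the Iwasawa algebra, by the $\fkp$-adic $L$-function $L_\fkp(g)$ of $g$. Next I would specialize this at the trivial character of $\Gal(\BQ_\infty/\BQ)$ via Mazur's control theorem, which turns the equality of characteristic ideals into the displayed identity for $L(g,1)$: the Tamagawa factors $t(A/\BQ_\ell)$ for $\ell\mid N$ appear as the discrepancy between the global Selmer condition at $\ell$ and its Iwasawa-theoretic analogue (there being no contribution from primes above $p$ when $\ov\rho$ is not finite at $p$; cf.~Corollary \ref{finite at p} and Lemma \ref{tam p zero}). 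When $a(p)=-1$ the modified Euler factor $1-a(p)^{-1}=2$ is a $\fkp$-adic unit (as $p\geq 5$) and $L_\fkp(g)$ has no exceptional zero, so the descent proceeds exactly as in the good-ordinary case of \cite{SU}. When $a(p)=1$ the $\fkp$-adic $L$-function has a simple exceptional zero at the trivial character; here \cite{Smult} invokes the exceptional-zero formula of Greenberg--Stevens (the Mazur--Tate--Teitelbaum conjecture) to recover the algebraic part $L(g,1)/2\pi i\,\Omega_g^+$ from the leading term of $L_\fkp(g)$, and hypothesis (c), namely $\fkL(\CV)\neq 0$, is exactly the input that makes this step go through. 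Finally, since $\ov\rho$ is irreducible, Lemma \ref{per lemma} identifies the $\Gamma_1(N)$-period used in \cite{SU} and \cite{Smult} with $\Omega_g^+=\Omega_{g,\Gamma_0(N)}^+$ up to $\sO_{(\fkp)}^\times$, reconciling the two normalizations.

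The point at which genuine difficulty lies---and which is settled in \cite{Smult} rather than here---is the exceptional case $a(p)=1$: one must track how the simple zero of $L_\fkp(g)$ at the trivial character interacts with the control theorem, the $\fkL$-invariant appearing both in the Greenberg--Stevens relation between the leading term of $L_\fkp(g)$ and the algebraic part of $L(g,1)$ and in the local discrepancy at $p$ detected by the control theorem, so that the two occurrences cancel and mere non-vanishing $\fkL(\CV)\neq 0$ suffices for the final formula. For the good and non-split multiplicative reduction types at $p$ the statement follows from \cite{SU} and \cite{Smult} as recalled above.
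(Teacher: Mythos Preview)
The paper does not supply its own proof of this statement: the theorem is simply quoted as \cite[Thm.~B]{Smult} and used as a black box, so there is no proof in the paper to compare your proposal against. What you have written is an expository sketch of the argument in the cited reference, and as such it is broadly accurate: the result does follow from the cyclotomic main conjecture for $g$ (established in \cite{SU} in the good ordinary case and extended to multiplicative reduction in \cite{Smult}) combined with a control-theorem descent, with the exceptional-zero case $a(p)=1$ handled via the Greenberg--Stevens formula and the non-vanishing of $\fkL(\CV)$ as the crucial input. Your appeal to Lemma~\ref{per lemma} to reconcile the $\Gamma_1(N)$-period used in \cite{SU} and \cite{Smult} with the $\Omega_g^+$ appearing here is correct and is indeed the one point where the present paper contributes something to the formulation.

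One small remark: your parenthetical about there being ``no contribution from primes above $p$ when $\ov\rho$ is not finite at $p$'' is true but somewhat out of place in this sketch, since the theorem as stated does not assume $\ov\rho$ is not finite at $p$. The sum $\sum_{\ell\mid N} t(A/\BQ_\ell)$ includes a possibly nonzero term at $p$ when $p\mid N$; it is only in the later applications (where the not-finite-at-$p$ hypothesis is in force) that Lemma~\ref{tam p zero} kills this term.
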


\noindent{See \ref{L-inv} for the definition of $\fkL(\CV)\in F_\fkp$.

Since $p$ splits in $K$, the $K$-twist $g^K$ of $g$ also satisfies the hypotheses of Theorem \ref{thm BSD 0}. The $K$-twist of $g$ is its twist
by the primitive quadratic Dirichlet $\chi_K$ of conductor $D$. The abelian variety associated with $g^K$ is the $K$-twist $A^K$ of $A$. 
From the decomposition of $\Sel_{\fkp^\infty}(A/K)$ under the action of 
$\Gal(K/\BQ)$ into the sum of the respective Selmer groups for $A$ and its $K$-twist $A^K$ and from Corollary \ref{tam coh cor}, we deduce from Theorem \ref{thm BSD 0}:

\begin{thm}\label{thm BSD K0}
 Suppose
\begin{itemize}
\item[\rm (a)] $\ov\rho$ is irreducible;
\item[\rm (b)] there exists a prime $q\neq p$ such that $q\mid\mid N$ and $\ov\rho$ is ramified at $q$;
\item[\rm (c)] if $p\mid\mid N$ and $a(p)=1$, then the $\fkL$-invariant $\fkL(\sV)$ of $\rho$ is non-zero.
\end{itemize}
Then
$$
\ord_\fkp\left(\frac{L(g/K,1)}{i\Omega_g^{cong}}\right) = \lth_{\sO_{\fkp}} \Sel_{\fkp^\infty}(A/K) + \sum_{w|N} t(A/K_w).
$$
\end{thm}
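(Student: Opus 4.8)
The plan is to deduce Theorem~\ref{thm BSD K0} from Theorem~\ref{thm BSD 0} by applying the latter both to $g$ and to its $K$-quadratic twist $g^K$, and then assembling the two formulas using the factorization $L(g/K,s)=L(g,s)\,L(g^K,s)$ together with the period identities of Lemmas~\ref{period comp lem} and~\ref{can period twist} and the Tamagawa identity of Corollary~\ref{tam coh cor}.

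First I would check that $g^K$ again satisfies hypotheses (a)--(c) of Theorem~\ref{thm BSD 0}. Since $\ov\rho_{g^K}\cong\ov\rho\otimes\chi_K$, irreducibility is preserved, so (a) holds. Because $(D,N)=1$, the level of $g^K$ is $ND^2$, so a prime $q\neq p$ with $q\mid\mid N$ at which $\ov\rho$ is ramified still satisfies $q\mid\mid ND^2$, and $\chi_K$ is unramified at $q$, so $\ov\rho_{g^K}$ is ramified there; this gives (b). For (c): as $p$ splits in $K$ we have $\chi_K(p)=1$ and $\chi_K|_{G_{\BQp}}$ trivial, hence $A^K\times_\BQ\BQp\cong A\times_\BQ\BQp$; in particular $p\mid\mid ND^2$ if and only if $p\mid\mid N$, the $U_p$-eigenvalue of $g^K$ equals $a(p)$, and $\fkL(\CV_{g^K})=\fkL(\CV)$, so (c) for $g^K$ is exactly (c) for $g$.

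Next I would apply Theorem~\ref{thm BSD 0} to $g$ and to $g^K$ and use Lemma~\ref{can period twist} in the form $\Omega_{g^K}^{+}=\Omega_g^{-}$. In the Tamagawa sum for $g^K$, for a prime $\ell\mid D$ the prime $\ell$ is ramified in $K$ and $A$ has good reduction at $\ell$ (as $(D,N)=1$), hence also over $K_w$ for the place $w\mid\ell$, so $t(A/K_w)=0=t(A/\BQ_\ell)$ and Corollary~\ref{tam coh cor} forces $t(A^K/\BQ_\ell)=0$; thus $\sum_{\ell\mid ND^2}t(A^K/\BQ_\ell)=\sum_{\ell\mid N}t(A^K/\BQ_\ell)$. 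Adding the two resulting equalities and using $L(g/K,1)=L(g,1)\,L(g^K,1)$ gives (with the evident conventions if any term equals $\infty$)
$$
\ord_\fkp\!\left(\frac{L(g/K,1)}{(2\pi i)^2\,\Omega_g^{+}\Omega_g^{-}}\right)
= \lth_{\sO_\fkp}\Sel_{\fkp^\infty}(A/\BQ) + \lth_{\sO_\fkp}\Sel_{\fkp^\infty}(A^K/\BQ)
+ \sum_{\ell\mid N}\bigl(t(A/\BQ_\ell)+t(A^K/\BQ_\ell)\bigr).
$$

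It then remains to rewrite each side. Since $p$ is odd, the $\Gal(K/\BQ)$-eigenspace decomposition $\Sel_{\fkp^\infty}(A/K)=\Sel_{\fkp^\infty}(A/\BQ)\oplus\Sel_{\fkp^\infty}(A^K/\BQ)$ identifies the sum of the two Selmer lengths with $\lth_{\sO_\fkp}\Sel_{\fkp^\infty}(A/K)$; Corollary~\ref{tam coh cor}, applied to every $\ell\mid N$ (including $\ell=p$, which is split, so there is nothing to prove in that case), identifies $\sum_{\ell\mid N}(t(A/\BQ_\ell)+t(A^K/\BQ_\ell))$ with $\sum_{w\mid N}t(A/K_w)$; and Lemma~\ref{period comp lem}, which says $\Omega_g^{cong}=i(2\pi i)^2\Omega_g^{+}\Omega_g^{-}$ up to a unit, shows that $(2\pi i)^2\Omega_g^{+}\Omega_g^{-}$ and $i\,\Omega_g^{cong}$ differ by a $\fkp$-adic unit. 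Substituting gives the asserted formula. I do not anticipate a serious obstacle: once Theorem~\ref{thm BSD 0} and the three comparison results are in hand the argument is bookkeeping, the only delicate points being the inheritance of hypothesis~(c) by $g^K$ — which works precisely because $p$ splits in $K$, so the twisting character is trivial on $G_{\BQp}$ and leaves the $\fkL$-invariant unchanged — and the vanishing of the extra Tamagawa factors $t(A^K/\BQ_\ell)$ at the primes $\ell\mid D$.
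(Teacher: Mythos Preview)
Your proposal is correct and follows essentially the same approach as the paper: apply Theorem~\ref{thm BSD 0} to both $g$ and $g^K$, then combine using the Selmer decomposition under $\Gal(K/\BQ)$, Corollary~\ref{tam coh cor}, and the period comparisons of Lemmas~\ref{can period twist} and~\ref{period comp lem}. You in fact supply more detail than the paper does---in particular the explicit verification that $g^K$ inherits hypotheses (a)--(c) (the paper just says ``since $p$ splits in $K$'') and the argument that the extra Tamagawa contributions at $\ell\mid D$ vanish---but the structure is identical.
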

\noindent Here, the sum is over the places $w$ of $K$ dividing $N$ and  $L(g/K,s) = L(g,s)L(g^K,s)$. 
The key observation reducing this theorem to the preceding is that by Lemma \ref{can period twist}, up $\sO_\fkp^\times$-multiple, $\Omega_{g^K}^+=\Omega_g^-$ and so, by Lemma \ref{period comp lem}, 
up to $\sO_\fkp^\times$-multiple, $i\Omega^{cong} = 2\pi i\Omega_g^+\cdot 2\pi i\Omega_{g^K}^+$.

\subsection{Gross's special value formula and the Gross period}

Suppose $N = N^+N^-$ is an admissible factorization with $N^-$ a product of an odd number of primes. 
Let $\phi_g\in \CS_{N^+,N^-}\otimes\sO_{(\fkp)}$ be an eigenform corresponding to $g$ under the Jacquet-Langlands correspondence, normalized to be non-zero
modulo $\fkp$.  That is, $\phi_g$ is an eigenvector with the same eigenvalues as $g$ and non-zero modulo $\fkp$;
$\phi_g$ is unique up to $\sO_{(\fkp)}^\times$-multiple. 
We view $\phi_g$ as an $\sO_{(\fkp)}$-valued function on $X_{N^+,N^-}$ such that
$\sum_{x\in X_{N^+,N^-}} \phi_g(x) = 0$ (if $\phi_g= \sum_x a_x\cdot x$, then $\phi(x) = a_x$). Then $\phi_g$ clearly extends to a function on $\BZ[X_{N^+,N^-}]$.
Gross's special value formula is then just

\begin{thm}[{\cite[(7--8)]{Vatsal-GZ}}]
Let $x_K= x_{N^+,N^-,K}\in \BZ[X_{N^+,N^-}]$. Then 
$$
\frac{|\phi_g(x_K)|^2}{\pair{\phi_g,\phi_g}} = D^{1/2}(w_K/2)^2\frac{L(g/K,1)}{\pair{g,g}}.
$$
\end{thm}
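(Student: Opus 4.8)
The displayed identity is Gross's central value formula, and the plan is to recognize it as a repackaging of Waldspurger's toric period formula (equivalently, of Gross's original theta-series computation) and then to translate the geometric data introduced above into the inputs of that formula, the only real work being the bookkeeping of the explicit constant. First I would view $\phi_g$, which lives on the Shimura set $X_{N^+,N^-}$ attached to the definite quaternion algebra $B=B_{N^-\infty}$, as a vector in the automorphic representation $\pi_B$ of $B^\times(\BA)$ that corresponds under the Jacquet--Langlands correspondence to the automorphic representation $\pi$ generated by $g$; this is exactly the correspondence recalled in the discussion of the Hecke action on $\CS_{N^+,N^-}$, and it identifies the Petersson pairing $\pair{\phi_g,\phi_g}$ on the quaternionic side with the classical $\pair{g,g}$ up to the standard ratio of local factors.

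Next I would rewrite $\phi_g(x_K)$ as a toric period. Using the reciprocity isomorphism $\mathrm{rec}\colon\Gal(K[1]/K)\isoarrow K^\times\backslash\widehat K^\times/\widehat\sO_K^\times$ and the description of $x_{N^+,N^-}(1)=[h]$ as the class attached to an optimal embedding $K\hookrightarrow B$ with $K\cap R=\sO_K$, one has
$$
\phi_g(x_K)=\sum_{\sigma\in\Gal(K[1]/K)}\phi_g(\mathrm{rec}(\sigma)h),
$$
and the right-hand side is, for a suitable choice of Haar measure on $\BA_K^\times$, the value at $h$ of the toric period map $\phi\mapsto\int_{K^\times\widehat K^\times\backslash\BA_K^\times}\phi(th)\,dt$. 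With $\phi_g(x_K)$ thus exhibited as a toric period, the core input is Waldspurger's formula, which gives an identity of the shape
$$
\frac{|\phi_g(x_K)|^2}{\pair{\phi_g,\phi_g}}=C\cdot\frac{L(g/K,1)}{L(1,\pi,\mathrm{ad})}\cdot\prod_v\alpha_v,
$$
where $L(g/K,1)=L(1/2,\pi_K)$ for $\pi_K$ the base change of $\pi$, $C$ is an explicit constant coming from the residue of $\zeta_K$ and the choices of measures, and the $\alpha_v$ are normalized local toric integrals equal to $1$ for all $v$ outside $N^+N^-Dp\infty$. Since $L(1,\pi,\mathrm{ad})$ is, up to explicit elementary factors, a multiple of $\pair{g,g}$ (this is the standard relation between the Petersson norm and the adjoint $L$-value), the denominator $\pair{g,g}$ on the right-hand side of the asserted formula is accounted for.

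The remaining step is the explicit evaluation of $C$ and of the finitely many local factors: the archimedean integral together with the integrals at the primes dividing $N^+$ and $N^-$ collapse into the normalizations — using that $N=N^+N^-$ is the given factorization, that $R$ is an Eichler order of the stated level, and the optimal-embedding property $K\cap R=\sO_K$ — the primes ramified in $K$ contribute the factor $D^{1/2}$ (equivalently this is the Gauss-sum relation $\tau(\chi_K)^2=\pm D$ together with $\tau(\chi_K)\in\BZ_p^\times$), and the volume of $\widehat\sO_K^\times$ modulo the global units $\sO_K^\times$ produces the factor $(w_K/2)^2$, where $w_K=\#\sO_K^\times$. Collecting these gives the displayed identity. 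Alternatively one can follow Gross directly: write $L(g/K,1)$ as a Rankin--Selberg convolution of $g$ with the product of a binary theta series and an Eisenstein series, apply the Siegel--Weil formula to replace the Eisenstein series by a theta integral, and use seesaw duality to rewrite the result as a sum over $X_{N^+,N^-}$ weighted by $\phi_g$ at Heegner points. In either approach the main obstacle is purely the normalization bookkeeping needed to pin down the constant $D^{1/2}(w_K/2)^2$ exactly; no new ideas beyond those in \cite{Vatsal-GZ} are needed, and the cleanest exposition is simply to match our $\phi_g$ and $x_K$ with the notation of \cite[(7--8)]{Vatsal-GZ}.
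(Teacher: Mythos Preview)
The paper does not prove this statement at all; it is simply quoted from the literature with the citation \cite[(7--8)]{Vatsal-GZ} serving as the entire justification. Your proposal is a reasonable high-level sketch of how Gross's formula is proved (either via Waldspurger's toric period formula or via Gross's original theta-series/seesaw argument), but it is doing work the paper does not do and does not expect you to do. If you were actually asked to supply a proof, the outline is sound in shape, though the passage where you ``collapse'' the local integrals at $N^+$, $N^-$, $D$, and $\infty$ into the explicit constant $D^{1/2}(w_K/2)^2$ is where all the content lies and is not really carried out here; you would need to either redo Gross's explicit local computations or carefully match notation with \cite{Vatsal-GZ}, as you yourself note at the end.
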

\noindent Here $w_K$ is the order of $\sO_K^\times$ and $\pair{\phi_g,\phi_g}$ is just the Petersson norm of $\phi_g$ with respect to the counting measure. 
Since $\sO$ is totally real, the expression in the theorem can be rewritten as
\begin{equation}\label{Gross form 1}
\phi_g(x_K)^2 = \frac{L(g/K,1)}{\Omega_g^{Gr}},
\end{equation}
where $\Omega_g^{Gr}$ is the Gross period:
$$
\Omega_g^{Gr} = D^{-1/2}(w_K/2)^{-2}\frac{\pair{g,g}}{\pair{\phi_g,\phi_g}}.
$$
This last quantity is clearly well-defined up to $\sO_{(\fkp)}^\times$-multiple.

To compare \eqref{Gross form 1} with the expression in Theorem \ref{thm BSD K0}, we need to compare the periods $\Omega_g^{cong}$ and $\Omega_g^{Gr}$.
Let
$$
\eta_{g,N^+,N^-} = \frac{\eta_g}{\pair{\phi_g,\phi_g}} = D^{1/2}(w_K/2)^2\frac{\Omega_g^{Gr}}{\Omega_g^{cong}}.
$$
The conclusion of \cite[Theorem 6.4]{Z13} remains true for the forms we consider:

\begin{thm}\label{thm RT} Assume that Hypothesis $\heart$ holds for $(g,\fkp,K)$ with $N^-$ a product of an odd number of prime factors,
and $p\mid\mid N^+$ and that $\ov\rho$ is not finite at $p$.
Then
$$
\ord_\fkp (\eta_{g,N^+,N^-})=\sum_{\ell\mid N^-}\lth_{\sO_\fkp } t(A/K_\ell).
$$
\end{thm}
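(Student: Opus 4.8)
The plan is to reproduce the proof of \cite[Thm.~6.4]{Z13}, verifying that each ingredient it uses survives the passage to the case $p\mid\mid N^+$. By definition $\eta_{g,N^+,N^-}=\eta_g(N,1)/\pair{\phi_g,\phi_g}$, so $\ord_\fkp(\eta_{g,N^+,N^-})=\ord_\fkp(\eta_g(N,1))-\ord_\fkp(\pair{\phi_g,\phi_g})$. The definite-case multiplicity one result of \S\ref{multone} (analogous to Lemma~\ref{mult one def}, proved via \cite{He}) shows that, after localization at the maximal ideal attached to $g$, the module $\CS_{N^+,N^-}\otimes\sO_{(\fkp)}$ is free of rank one over the corresponding localized Hecke algebra; hence the self-pairing of $\phi_g$ (normalized to be nonzero modulo $\fkp$) generates the new-at-$N^-$ congruence ideal, i.e.\ $\ord_\fkp(\pair{\phi_g,\phi_g})=\ord_\fkp(\eta_g(N^+,N^-))$. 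It therefore suffices to compute $\ord_\fkp(\eta_g(N,1))-\ord_\fkp(\eta_g(N^+,N^-))$.

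First I would run the Ribet--Takahashi/Pollack--Weston telescoping argument exactly as in \cite[\S6]{Z13}. Write $N^-=\ell_1\cdots\ell_r$ with $r$ odd and each $\ell_i\neq p$, and interpolate between the factorizations $(N,1)$ and $(N^+,N^-)$ by moving one $\ell_i$ at a time out of the Eichler factor and into the quaternion discriminant, so that the ambient object alternates between an indefinite Shimura curve and a definite Shimura set. Crucially, $p$ stays in the $N^+$-factor at every stage, which is exactly why no term at $p$ appears on the right-hand side. Each single step is a Ribet ``exchange of local invariants'' at $\ell_i$: the character group of the toric part of the special fibre at $\ell_i$ of the N\'eron model of the relevant Jacobian is identified, Hecke- and $G_{\BQ_{\ell_i}}$-equivariantly, with the module of degree-zero divisors on a definite Shimura set --- the analogue of the description in \ref{red at p} with $p$ replaced by $\ell_i$, respectively the Cerednik--Drinfeld description of \cite{R90}, according to the parity of the step. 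Grothendieck's monodromy-pairing formula for the component group, combined once more with multiplicity one, converts this into the relation $\ord_\fkp(\eta_g(M,D))-\ord_\fkp(\eta_g(M/\ell_i,D\ell_i))=\lth_{\sO_\fkp}t(A/K_{\ell_i})$, where $M$ is the level divisible by $\ell_i$ and $D$ the corresponding discriminant, the right-hand side being the $\sO_\fkp$-length of the $\fkp$-part of the component group of $A$ over $K_{\ell_i}$ (here one uses Corollary~\ref{tam coh cor}, $\ell_i$ being inert in $K$). Telescoping over $i$ yields the theorem.

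The only places where $p\mid\mid N^+$ and the hypothesis that $\ov\rho$ is not finite at $p$ actually intervene are the following. First, the multiplicity one statements just invoked must hold for Tate modules of Jacobians and for Shimura-set modules carrying a $\Gamma_0(p)$-structure with $\ov\rho$ not finite at $p$; this is precisely what Lemmas~\ref{mult one} and \ref{mult one def} provide, their proofs resting on \cite{He}, which is itself made available in this generality by the level-raising Lemma~\ref{raise level lem} and the versions of Ihara's lemma in \S\ref{Ihara}. Second, the exchange at $\ell_i$ is performed for Shimura curves that already carry an Eichler level at $p$, and one needs the identification of character groups to be equivariant for the full Hecke algebra (including $U_p$) and those character groups to be free of rank one over the localized Hecke ring --- again supplied by \cite{He} in the present setting. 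Third, to be sure that $p$ contributes nothing, one notes that the component group of $A$ at $p$ has trivial $\fkp_0$-torsion by Corollary~\ref{finite at p}, equivalently $t(A/K_w)=0$ for all $w\mid p$ by Lemma~\ref{tam p zero}, so the ``trivial'' steps in which the level at $p$ is held fixed really are trivial. I expect the principal obstacle to be the first two points --- assembling the multiplicity one and character-group inputs of \cite{He} in the not-finite-at-$p$ regime, which is exactly why \S\ref{Ihara} and \S\ref{multone} were developed. Once these are in hand, the argument of \cite[\S6]{Z13} carries over with no further change.
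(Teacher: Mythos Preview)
Your proposal is correct and follows essentially the same route as the paper: both carry over the proof of \cite[Thm.~6.4]{Z13}, which in turn follows \cite[Thm.~6.8]{PW}, verifying that the required inputs survive when $p\mid\mid N^+$ and $\ov\rho$ is not finite at $p$. The paper's proof is more terse---it simply lists \cite[Thm.~6.2--6.8]{PW} and checks the hypotheses of each (multiplicity one from Lemma~\ref{mult one}, Kohel's result for \cite[Prop.~6.5]{PW}, the Ribet--Takahashi/Khare formula for the final step)---while you unpack the underlying telescoping and character-group argument more explicitly. One point the paper highlights that you leave implicit: in the non-square-free case, the Ribet--Takahashi formula $\delta_f(N,1)/\delta_f(N_1,N_2)=\sum_{\ell\mid N_2}\lth_{\sO_\fkp}\Phi(A/K_\ell)_\fkp$ requires the existence of a prime $\ell\mid\mid N$ at which $\ov\rho$ is ramified, located appropriately relative to the factorization; this is exactly what part (3) of Hypothesis~$\heart$ supplies, and it is worth saying so. Your invocation of Lemma~\ref{tam p zero} to rule out a contribution at $p$ is harmless but unnecessary, since $p$ never leaves the $N^+$-side of the factorization and so no step of the telescoping involves it.
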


\begin{proof} 
The proof of \cite[Theorem 6.4]{Z13} carries over. 
The proof goes along the same line as that of \cite[Theorem 6.8]{PW}, and it suffices to verify the analogous statements for \cite[Thm~6.2-6.8]{PW}. 
For this, we note the following:
\begin{itemize}
\item The multiplicity one result needed to carry over the proof of \cite[Theorem 6.2]{PW} is provided by Lemma \ref{mult one}. 
\item \cite[Prop.~6.5]{PW} is from \cite{Kohel}, which does not assume $p\nmid N$ (nor the square-freeness of $N$).
\item In the proof \cite[Prop.~6.7]{PW}, the square-freeness is only used to find a prime factor $r|N$ such that $\ov{\rho}$ is ramified at $r$; Hypothesis $\heart$ ensures such an $r$ exists.
\item  To carry over the proof of \cite[Thm.~6.8]{PW} we need the analog of the last displayed equation from {\it loc.~cit.}, which is 
due to Ribet--Takahashi \cite{RT}, and Takahashi \cite{T}:
$$
\frac{\delta_f(N,1)}{\delta_f(N_1,N_2)}=\sum_{\ell|N_2}\lth_{\sO_\fkp }\Phi(A/K_\ell)_{\fkp },
$$
 for any coprime factorization $N=N_1N_2$ with $N_2$ a square-free product of an even number of primes\footnote{If $N$ is square-free, this is proved by Ribet and Takahash \cite{RT} 
under the assumption that $N_1$ is not a prime. This assumption was removed by Takahashi \cite{T}.}. 
This result does not assume $p\nmid N$. Moreover, if $N$ is not square-free, from the proof of the second assertion of \cite[Thm.~1]{RT}, one may deduce the same formula under either of the following 
two assumptions:
 \begin{itemize}
 \item  there exists a prime $\ell\mid\mid N_1$ such that $\ov\rho$ is ramified at $\ell$ and a different prime $\ell'\mid\mid N_1$;
  \item there exists a prime $\ell\mid\mid N_2$ such that $\ov\rho$ is ramified at $\ell$. 
 \end{itemize}
That one or the other holds for $\ov\rho$ is guaranteed by Hypothesis $\heart$ for $(g,\fkp,K)$.
Note that  Ribet--Takahashi's result (proved for elliptic curves in \cite{RT}) has been extended to 
$\GL_2$-type abelian varieties by Khare \cite{KhareGL2}, and these results do not require $p\nmid N$.
\end{itemize}
The proofs of \cite[Prop.~6.3, 6.4, 6.6]{PW} carry over directly.
\end{proof}

We deduce the following important consequence:

\begin{thm}\label{period Sel thm} 
Assume that Hypothesis $\heart$ holds for $(g,\fkp,K)$ with $N^-$ a product of an odd number of prime factors and $p\mid\mid N$
and that $\ov\rho$ is not finite at $p$.
Suppose also that if $a(p)=1$, then the $\fkL$-invariant $\fkL(\sV)$ of $\rho$ is non-zero. 
Let $x_K = x_{N^+,N^-,K}\in \BZ[X_{N^+,N^-}]$. Then
$$
2\cdot\ord_\fkp(\phi_g(x_K)) = \ord_\fkp(\frac{L(g/K,1)}{\Omega_g^{Gr}}) = \lth_\fkp \Sel_{\fkp^\infty}(A/K).
$$ 
\end{thm}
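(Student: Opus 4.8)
The plan is to obtain the first equality directly from Gross's special value formula \eqref{Gross form 1}: taking $\ord_\fkp$ of both sides of $\phi_g(x_K)^2 = L(g/K,1)/\Omega_g^{Gr}$ gives $2\,\ord_\fkp(\phi_g(x_K)) = \ord_\fkp(L(g/K,1)/\Omega_g^{Gr})$. For the second equality the idea is to assemble three ingredients already at our disposal: the rank-zero $\fkp$-part of the Birch--Swinnerton-Dyer formula over $K$ (Theorem \ref{thm BSD K0}), the Ribet--Takahashi--type identity relating the auxiliary congruence number $\eta_{g,N^+,N^-}$ to the Tamagawa factors at the primes dividing $N^-$ (Theorem \ref{thm RT}), and the relation $\eta_{g,N^+,N^-} = D^{1/2}(w_K/2)^2\,\Omega_g^{Gr}/\Omega_g^{cong}$ recorded before Theorem \ref{thm RT}. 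First I would check that the hypotheses of Theorems \ref{thm BSD K0} and \ref{thm RT} all follow from Hypothesis $\heart$ together with the non-finiteness of $\ov\rho$ at $p$ and, when $a(p)=1$, the assumed non-vanishing of $\fkL(\sV)$: irreducibility of $\ov\rho$ is implicit in Hypothesis $\heart$, and a prime $q\neq p$ with $q\mid\mid N$ at which $\ov\rho$ ramifies is furnished by part (3) of that hypothesis. Theorem \ref{thm BSD K0} then gives $\ord_\fkp(L(g/K,1)/(i\Omega_g^{cong})) = \lth_{\sO_\fkp}\Sel_{\fkp^\infty}(A/K) + \sum_{w\mid N} t(A/K_w)$, while Theorem \ref{thm RT} combined with the period relation (and with $\ord_\fkp(i)=0$ and $\ord_\fkp(D^{1/2}(w_K/2)^2)=0$, the latter because $p$ splits in $K$ so $p\nmid D$ and because $p\geq 5$ so $p\nmid w_K$) yields $\ord_\fkp(L(g/K,1)/\Omega_g^{Gr}) = \ord_\fkp(L(g/K,1)/(i\Omega_g^{cong})) - \sum_{\ell\mid N^-} t(A/K_\ell)$.

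It then remains to show that the two Tamagawa sums cancel, i.e. that $\sum_{w\mid N} t(A/K_w) = \sum_{\ell\mid N^-} t(A/K_\ell)$, equivalently that $\sum_{w\mid \ell} t(A/K_w) = 0$ for every rational prime $\ell\mid N^+$. For $\ell=p$ this is Lemma \ref{tam p zero}, which uses crucially that $\ov\rho$ is not finite at $p$ (via Corollary \ref{finite at p}); this is the ingredient replacing the trivial vanishing at $p$ in \cite{Z13}, where $p\nmid N$ and $A$ has good reduction at $p$. For $\ell\mid N^+$ with $\ell\neq p$, Corollary \ref{tam coh cor} reduces the claim to $t(A/\BQ_\ell)=t(A^K/\BQ_\ell)=0$, and since $\ell$ splits in $K$ we have $A^K\cong A$ over $\BQ_\ell$, so only $t(A/\BQ_\ell)=0$ needs proof. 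If $\ell\mid\mid N^+$ then $\ell\in\Ram(\ov\rho)$ by part (2) of Hypothesis $\heart$, and for a prime of multiplicative reduction the $\fkp$-part of the component group of $A/\BQ_\ell$ vanishes precisely when $\ov\rho$ is ramified there (the Tate-uniformization argument behind Corollary \ref{finite at p}, applied at $\ell$ in place of $p$), so $t(A/\BQ_\ell)=0$. If $\ell^2\mid N^+$ then part (4) of Hypothesis $\heart$ gives $V^{G_{\BQ_\ell}}=0$, and then by Lemma \ref{tam coh lem} one has $t(A/\BQ_\ell) = \lth_{\sO_\fkp} H^1(\BF_\ell, A[\fkp^\infty]^{I_\ell})$, which vanishes because the $\fkp$-torsion submodule of $A[\fkp^\infty]^{I_\ell}$ is $V^{I_\ell}$, whose $\Frob_\ell$-invariants equal $V^{G_{\BQ_\ell}}=0$, forcing $(A[\fkp^\infty]^{I_\ell})^{\Frob_\ell}=0$. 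Feeding $\sum_{w\mid N} t(A/K_w)=\sum_{\ell\mid N^-} t(A/K_\ell)$ back into the identities of the first paragraph collapses everything to $\ord_\fkp(L(g/K,1)/\Omega_g^{Gr}) = \lth_{\sO_\fkp}\Sel_{\fkp^\infty}(A/K)$.

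I expect the main obstacle to be not any single deep step—the substantive content is carried entirely by Theorems \ref{thm BSD K0} and \ref{thm RT} and by Gross's formula, all of which are at hand—but rather the careful bookkeeping of Tamagawa factors, and in particular the honest verification that $t(A/K_w)=0$ at the primes $w\mid p$. This vanishing is genuinely new relative to \cite{Z13}, since here $A$ has purely toric (not good) reduction at $p$; it is exactly at this point that the hypothesis that $\ov\rho$ is not finite at $p$ enters, through Corollary \ref{finite at p} and Lemma \ref{tam p zero}. A secondary point demanding care is to confirm that the hypotheses of the cited theorems really do follow from Hypothesis $\heart$ and the $\fkL$-invariant assumption, so that no additional restriction on $(g,\fkp,K)$ is quietly introduced.
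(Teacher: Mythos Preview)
Your proposal is correct and follows essentially the same route as the paper: Gross's formula for the first equality, then the combination of Theorem \ref{thm BSD K0}, Theorem \ref{thm RT}, and the period relation defining $\eta_{g,N^+,N^-}$ for the second, reducing everything to the vanishing of the Tamagawa contributions at primes dividing $N^+$. Your treatment is in fact slightly more careful than the paper's in one respect: you separate the cases $\ell\mid\mid N^+$ and $\ell^2\mid N^+$, invoking part (4) of Hypothesis $\heart$ for the latter, whereas the paper's proof attributes the vanishing for all $\ell\mid N^+$, $\ell\neq p$, to part (2) alone (which strictly speaking only addresses $\ell\mid\mid N^+$).
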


\begin{proof} By the definitions of $\Omega_g^{Gr}$ and $\Omega_g^{cong}$, we have 
$$
\frac{L(g/K,1)}{\Omega_g^{Gr}} = \sqrt{-D}(w_K/2)^2\frac{L(g/K,1)}{i\Omega^{cong}}\cdot\frac{1}{\eta_{g,N^+,N^-}}.
$$
Since $\sqrt{-D}\in \BZ_p^\times$ (as $p$ splits in $K$) and $w_K/2\in\BZ_p^\times$ (as $p\geq 5$), we then have
by \eqref{Gross form 1} and Theorem \ref{thm RT} that
$$
2\cdot\ord_\fkp(\phi_g(x_K)) = \ord_\fkp(\frac{L(g/K,1)}{i\Omega_g^{cong}}) - \sum_{\ell\mid N^-} t(A/K_\ell). 
$$
Combining this with Theorem \ref{thm BSD K0}, we conclude that
$$
2\cdot\ord_\fkp(\phi_g(x_K)) =  \lth_\fkp\Sel_{\fkp^\infty}(A/K) + 2\sum_{\ell\mid N^+} t(A/\BQ_\ell).
$$
The desired equality is now seen to hold upon noting that part (2) of hypothesis $\heart$ 
ensures that for all $\ell\mid N^+$, $\ell\neq p$,
$H^1(\BF_\ell, A[\fkp^\infty]^{I_\ell}) = 0$ and hence, by Lemma \ref{tam coh lem}, that $t(A/\BQ_\ell)=0$,
and, furthemore, that $t(A/\BQ_p)=0$ by Lemma \ref{tam coh cor} (as $\ov\rho$ is not finite at $p$).
\end{proof}

\subsection{Jochnowitz congruences}
Suppose that $N=N^+N^-$ is a permissible factorization with $N^-$ a product of an even number of primes. 
The {\em Jochnowitz congruence} shows that that the Heegner point $y_K = y_{A_{g},K} \in A_{g}(K)$ is non-torsion 
if a certain $L$-value for a congruent form $g'$ is non-zero modulo $p$. As stated here, this is just \cite[Thm.~6.5]{Z13}
and the proof carries over directly.

\begin{prop}[{\cite[Thm.~6.5]{Z13}}]\label{Jochno cong}
Assume that Hypothesis $\heart$ holds for $(g,\fkp,K)$ with $N^-$ a product of an even number of prime factors and $p\mid\mid N^+$
and that $\ov\rho$ is not finite at $p$.
Let $q\in \Lambda'$ be an admissible prime and $g'$ a newform of level $Nq$ congruent to $g$ as in 
Lemma \ref{raise level lem} with associated prime $\fkp'$. 
Then $\loc_q c(1)\in H^1(K_q,V_0)$ is non-zero if and only if $\frac{L(g'/K,1)}{\Omega_{g'}^{Gr}}$ is non-zero modulo~$\fkp'$.
\end{prop}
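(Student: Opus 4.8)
The plan is to carry out the argument of \cite[Thm.~6.5]{Z13} --- itself an adaptation of the reciprocity arguments of Bertolini--Darmon and Vatsal --- and to verify that the handful of places where the hypothesis $p\nmid N$ entered are now covered by the results of \S\ref{level-raise}--\S\ref{multone}. Since $q\in\Lambda'$ is admissible, $q\nmid N$ and $q$ is inert in $K$, so the optimal quotient $A_0 = A_{g,0}$ of $J(X_{N^+,N^-})$ has good reduction at $q$ and $A_0[\fkp_0]\cong V_0$ by Lemma \ref{mult one}. By its construction in \S\ref{Koly class g}, $c(1) = c(1,1)$ is, up to $k_0^\times$-multiple, the image under the Kummer map $A_0(K)/\fkp_0 A_0(K)\to H^1(K,V_0)$ of the Heegner point $y_K\in A_0(K)$ (the trace from $K[1]$ of $y_{A_0}(1)$). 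Hence $\loc_q c(1)$ lies in $H^1_{fin}(K_q,V_0) = H^1(K_q,k_0)\cong k_0$ and is the Kummer class of the reduction $\bar y_K\in A_0(\BF_{q^2})$, so that $\loc_q c(1)\neq 0$ precisely when $\bar y_K$ is non-zero in the one-dimensional $k_0$-space $A_0(\BF_{q^2})/\fkp_0 A_0(\BF_{q^2})$. The first task is to translate this condition into one about $g'$.

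First I would apply \ref{Heegner red} (that is, \cite[Thm.~2.1(i)]{Z13}): the reduction modulo $q$ of $x_{N^+,N^-}(1)$ is the point $x_{N^+,N^-q}(1)$ in the supersingular locus $X_{N^+,N^-}(\ov\BF_q)^{ss}$, which is identified with the Shimura set $X_{N^+,N^-q}$. Taking the trace from $K[1]$ down to $K$ --- an extension unramified at $q$ --- then identifies $\bar y_K$ with the image of the divisor $x_K = x_{N^+,N^-q,K}\in\BZ[X_{N^+,N^-q}]$ under the specialization of divisor classes supported in the supersingular locus. Next I would bring in level-raising: by Lemma \ref{raise level lem} with $m=q$ there is a newform $g'$ of level $Nq$, weight $2$ and trivial nebentypus, a prime $\fkp'\subset\sO_{g'}$ with $\sO_{g',0}/\fkp'_0 = k_0$, and $\ov\rho_{g'}\cong\ov\rho$ over $k_0$; since $Nq = N^+\cdot(N^-q)$ with $N^-q$ a product of an odd number of primes, $g'$ transfers under Jacquet--Langlands to an eigenform $\phi_{g'}$ on $X_{N^+,N^-q}$, normalized to be non-zero modulo $\fkp'$. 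Then I would invoke Ribet's theory identifying $\CS_{N^+,N^-q}$, Hecke-equivariantly, with the piece of the reduction of $J(X_{N^+,N^-})$ at $q$ through which this specialization factors, together with the multiplicity one results of Lemmas \ref{mult one} and \ref{mult one def} --- the latter giving that the relevant Shimura-set module, after localization at the maximal ideal cut out by $g'$, is free of rank one over $\BT' = \BT_0(N^+,N^-q)_{\fkm_0'}$. Using the congruence $g\equiv g'\pmod{\fkp'}$ to pass between the $g$- and $g'$-eigencomponents, one matches the one-dimensional space $H^1_{fin}(K_q,V_0)$ with the $g'$-part of $\CS_{N^+,N^-q}\otimes k_0$ and identifies $\loc_q c(1)$ with a $k_0^\times$-multiple of $\phi_{g'}(x_K)$ modulo $\fkp'$. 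Therefore $\loc_q c(1)\neq 0$ if and only if $\phi_{g'}(x_K)\not\equiv 0\pmod{\fkp'}$, and Gross's formula \eqref{Gross form 1} applied to $g'$, namely $\phi_{g'}(x_K)^2 = L(g'/K,1)/\Omega_{g'}^{Gr}$, converts this into the asserted non-vanishing of $L(g'/K,1)/\Omega_{g'}^{Gr}$ modulo $\fkp'$.

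Everything in this chain that involves only the prime $q\neq p$ --- good reduction of $A_0$ at $q$, the reduction of Heegner points modulo $q$, the structure of $H^1(K_q,V_0)$ for an admissible $q$, and Gross's formula --- is exactly as in \cite[\S6]{Z13} and requires no change. The one point where \cite{Z13} used $p\nmid N$ is the level-raising and multiplicity one input, and I expect this to be the main obstacle: here $\ov\rho$ is irreducible but not finite at $p$ with $p\mid\mid N$, a case not covered by the level-raising and multiplicity one statements cited in \cite{Z13}. The required substitutes are Lemma \ref{raise level lem} (existence of $g'$, obtained from \cite[Cor.~3.1.7]{Gee-types} together with Hida theory), Lemma \ref{mult one} (mod $p$ multiplicity one for $J(X_{N^+,N^-})$, via \cite{He} and the new level-raising), and Lemma \ref{mult one def} (freeness of the definite-quaternionic Shimura-set module over $\BT'$, via Mazur's principle); each is established under hypotheses already implied by Hypothesis $\heart$ for $(g,\fkp,K)$ --- irreducibility of $\ov\rho$, $\ov\rho$ not finite at $p$, and ramification of $\ov\rho$ at all $\ell\mid N^-$ with $\ell\equiv\pm 1\pmod p$. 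Granting these, the computation of \cite[Thm.~6.5]{Z13} applies verbatim.
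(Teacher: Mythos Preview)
Your proposal is correct and follows essentially the same approach as the paper, which simply asserts that the proof of \cite[Thm.~6.5]{Z13} carries over directly; the paper's accompanying remark singles out Lemma \ref{mult one def} (freeness of $(\CS_{N^+,N^-q}\otimes\BZ_p)_{\fkm_0'}$ over $\BT'$) as the multiplicity one input that reduces the question to the non-vanishing of $\phi_{g'}(x_K)$ modulo $\fkp'$, after which Gross's formula \eqref{Gross form 1} gives the $L$-value statement. You have correctly identified that the only places the argument of \cite{Z13} needs modification are the level-raising and multiplicity one inputs, and that these are supplied by Lemmas \ref{raise level lem}, \ref{mult one}, and \ref{mult one def} under the hypotheses in force.
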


\begin{remark}\hfill
\begin{itemize}
\item[(a)] The equivalence with the normalized $L$-value not vanishing arises from \eqref{Gross form 1}: 
as a consequence of the multiplicity one result of Lemma \ref{mult one def}, $\loc_q c(1)$ is shown to be non-zero 
if and only if $\phi_{g'}(x_K)$ is non-zero modulo $\fkp'$.
\item[(b)] As $c(1)$ is the image of $y_{A_0,K} \in A_0(K)$ under the Kummer map 
$$
A_0(K)/\fkp_0A_0(K)\hookrightarrow H^1(K,V_0),
$$ 
this shows that $y_{A_0,K}$, and hence $y_K$, is non-zero and even non-torsion (as $A_0(K)[\fkp_0]=~0$). 
\end{itemize}
\end{remark}

\section{The rank one case}

We explain that the base case (the rank one case) of the induction argument of \cite{Z13} continues to hold for
certain cases where $p\mid\mid N$. Let $g$ and $\fkp$ be as in \ref{newform-gal}.

\subsection{The rank one case} \label{rank one}
This is just the extension of \cite[Thm,~7.2]{Z13} to the cases with $p\mid\mid N$ considered here.

\begin{thm}\label{rank one thm} Suppose $p\geq 5$ and
\begin{itemize}
\item[\rm (a)] Hypothesis $\heart$ holds for $(g,\fkp,K)$ with $N^-$ a product of an even number of primes and $p\mid\mid N^+$;
\item[\rm (b)] Hypotheses $\club$ holds for $\ov\rho$;
\item[\rm (c)] $\ov\rho$ is not finite at $p$;
\item[\rm (d)] if $a(p)=1$ then Hypothesis $\fkL$ holds for $\ov\rho$;
\end{itemize}
If $\dim_k\Sel_\fkp(A/K) = 1$, then 
$c(1)\neq 0$. In particular, the Heegner point $y_K=y_{A,K} \in A(K)$ is non-torsion.
\end{thm}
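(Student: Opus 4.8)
The plan is to carry over the proof of \cite[Thm.~7.2]{Z13}, the base case of the induction there. Its shape is: find an admissible prime $q$ whose localization map is surjective on $\Sel_\fkp(A/K)$, so that the level-raised newform $g_q$ has Selmer rank zero; apply the rank-zero BSD formula of \S\ref{BSD0} to $g_q$ to see that a suitable $L$-value is a $\fkp_q$-adic unit; and then use the Jochnowitz congruence (Proposition~\ref{Jochno cong}) to transfer this non-vanishing back to $\loc_q c(1)$. The only genuinely new point --- and the place I expect the bookkeeping to be delicate --- is verifying that $g_q$, which now has multiplicative reduction at $p$, still satisfies all the hypotheses required to invoke Theorem~\ref{period Sel thm}, in particular the non-vanishing of its $\fkL$-invariant when $a(p)=1$; this is exactly what Hypothesis~$\fkL$ and Lemmas~\ref{Linv lem}--\ref{Linv lem EC} were designed to make possible.

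\emph{Step 1 (level-lowering prime).} I would first use the $k_0$-rational structure of the Selmer group (Proposition~\ref{Sel loc}) to pick a nonzero class $c_0\in\Sel_\fkp(A/K)$ lying in $H^1(K,V_0)$; since $\dim_k\Sel_\fkp(A/K)=1$ this $c_0$ spans (when $A$ is an elliptic curve $k_0=k=\BF_p$ and there is nothing to descend). By Lemma~\ref{rank lower lem}, which uses Hypothesis~$\club$, there is a positive density --- hence a nonempty set --- of admissible primes $q$ with $\loc_q c_0\neq0$; fix one. As $q\nmid N$, the Selmer condition at $q$ is the unramified one, so $\loc_q c_0\in H^1_{fin}(K_q,A[\fkp])$; hence $\loc_q:\Sel_\fkp(A/K)\to H^1_{fin}(K_q,A[\fkp])\cong k$ is nonzero, so surjective, and Proposition~\ref{rank lower} gives $\Sel_{\fkp_q}(A_q/K)=0$, where $A_q=A_{g_q}$ is the level-raised abelian variety of \ref{new at m}. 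The surjection $\Sel_{\fkp_q}(A_q/K)\twoheadrightarrow\Sel_{\fkp_q^\infty}(A_q/K)[\fkp_q]$ then forces $\Sel_{\fkp_q^\infty}(A_q/K)=0$.

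\emph{Step 2 (rank-zero BSD for $g_q$).} Here $g_q$ has level $Nq=N^+\cdot(N^-q)$: since $q$ is inert in $K$ and $q\nmid N$ this is a permissible factorization, $N^-q$ is a product of an odd number of primes, and $p\mid\mid N^+$. I would check that Hypothesis~$\heart$ holds for $(g_q,\fkp_q,K)$ with this factorization: because $\ov\rho_{g_q}\cong\ov\rho$ and $N^+$ is unchanged, parts (1), (3), (4) transfer verbatim from $(g,\fkp,K)$, and for part (2) the only new prime is $q$, at which $\ov\rho_{g_q}\cong\ov\rho$ is unramified and which satisfies $q\not\equiv\pm1\pmod p$ by admissibility. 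Also $\ov\rho_{g_q}\cong\ov\rho$ is not finite at $p$, and $a_{g_q}(p)=a_g(p)$ (both are $\pm1$ by Lemma~\ref{ord res lem} and reduce to the Frobenius eigenvalue of the common unramified quotient of $\ov\rho_{g_q}\cong\ov\rho$ at $p$, with $p\geq5$). Finally, if $a(p)=1$ then Hypothesis~$\fkL$ is a condition on $V_0|_{G_{\BQ_p}}$ alone, so it holds for $g_q$, and Lemma~\ref{Linv lem} (applicable since $V_{g_q}$ is not finite at $p$) gives $\fkL(\CV_{g_q})\neq0$. Thus Theorem~\ref{period Sel thm} applies to $g_q$ and, with $x_K=x_{N^+,N^-q,K}$, yields $2\cdot\ord_{\fkp_q}(\phi_{g_q}(x_K))=\lth_{\fkp_q}\Sel_{\fkp_q^\infty}(A_q/K)=0$ by Step~1; equivalently, by \eqref{Gross form 1}, $L(g_q/K,1)/\Omega_{g_q}^{Gr}$ is a $\fkp_q$-adic unit.

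\emph{Step 3 (conclusion).} Hypothesis~$\heart$ for $(g,\fkp,K)$ with $N^-$ of even length and $p\mid\mid N^+$, together with $\ov\rho$ not finite at $p$, are exactly hypotheses (a) and (c), so Proposition~\ref{Jochno cong} applies to the prime $q$ and the form $g_q$: it says $\loc_q c(1)\neq0$ if and only if $L(g_q/K,1)/\Omega_{g_q}^{Gr}\not\equiv0\pmod{\fkp_q}$, which holds by Step~2. Hence $\loc_q c(1)\neq0$ and in particular $c(1)\neq0$. The non-torsionness of $y_K=y_{A,K}$ then follows exactly as in Remark~(b) after Proposition~\ref{Jochno cong}: $c(1)$ is, up to a $k^\times$-multiple, the image of $y_{A_0,K}$ under the injective Kummer map $A_0(K)/\fkp_0A_0(K)\hookrightarrow H^1(K,V_0)$, and $A_0(K)[\fkp_0]=0$. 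The main obstacle is the hypothesis check in Step~2 --- above all the $\fkL$-invariant --- but, as indicated, the residual nature of Hypothesis~$\fkL$ makes this work; everything else is the formal transfer of the argument of \cite{Z13}.
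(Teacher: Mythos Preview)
Your proposal is correct and follows essentially the same route as the paper's own proof: choose an admissible prime $q$ on which the localization of the one-dimensional Selmer group is nonzero, apply rank-lowering to get $\Sel_{\fkp_q^\infty}(A_q/K)=0$, verify that the level-raised form $g_q$ inherits Hypothesis~$\heart$ and (via the residual nature of Hypothesis~$\fkL$ and Lemma~\ref{Linv lem}) a nonzero $\fkL$-invariant, invoke Theorem~\ref{period Sel thm} to make $\phi_{g_q}(x_K)$ a unit, and finish with Proposition~\ref{Jochno cong}. Your hypothesis checks in Step~2 are, if anything, more explicit than the paper's (which simply asserts that Hypothesis~$\heart$ ``clearly'' holds for $g'$ and that $a_g(p)=1\iff a_{g'}(p)=1$).
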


\begin{proof} By Lemma \ref{rank lower lem} there exists an admissible prime $q$ such that 
$$
\loc_q:\Sel_\fkp(A/K)\twoheadrightarrow H^1_{fin}(K_q,V_0)\cong k_0.
$$
Let $g'$ be a newform of level $Nq$ congruent to $g$ as in Lemma \ref{raise level lem} with associated prime $\fkp'$, and let $k'=\sO_{g'}/\fkp'$.
Let $A'=A_{g'}$ be the abelian variety associated with $g'$. Then by Proposition \ref{rank lower}, 
$$
\dim_{k'} \Sel_{\fkp'}(A'/K) = \dim_k\Sel_\fkp(A/K) - 1 = 0.
$$ 
As $A'[\fkp']$ is irreducible, $\Sel_{\fkp'}(A'/K)$ is the $\fkp'$-torsion of $\Sel_{\fkp^{'\infty}}(A'/K)$ and so 
the latter is also zero. Hypothesis $\heart$ clearly also holds for $(g',\fkp',K)$.
Furthermore, since Hypothesis $\fkL$ holds for $(g,\fkp)$ by assumption if $a_g(p)=1$ and sine
$A'[\fkp'] \cong V_0\otimes_{k_0}k'$, Hypothesis $\fkL$ then also holds for $(g',\fkp')$ if $a_{g'}(p)=1$ 
(as $a_g(p)=1$ if and only if $a_{g'}(p)=1$). 
In particular, if $a_{g'}(p)=1$ then the $\fkL$-invariant for $g'$ is non-zero by Lemma \ref{Linv lem}. 
It then follows from Theorem \ref{period Sel thm} applied to $g'$ that
$\ord_{\fkp'}(\frac{L(g'/K,1)}{i\Omega_{g'}^{cong}})=0$ and
$\phi_{g'}(x_K)$ is non-zero modulo $\fkp'$.
The conclusion of the theorem is then a consequence of the Jochnowitz congruence of Proposition \ref{Jochno cong}.
\end{proof}

\section{The main results}\label{main results}

Theorems \ref{coh cong thm} and \ref{rank one thm} are the keys  to the inductive arguments in \cite{Z13}.
Having shown that they continue to hold, the proofs of the main results of \cite{Z13} carry over, with 
\cite[Lem.~8.1]{Z13} replaced with Lemma \ref{indep classes}.

\subsection{Kolyagin's conjecture for non-finite multiplicative reduction}
We obtain the non-vanishing of the mod $p$ Kolyvagin system $\kappa_1$ in some cases of multiplicative reduction.
This is the analog of \cite[Thms.~9.1, 9.3]{Z13}.

\begin{thm}\label{Kconj thm} Let $p\geq 5$ be a prime. 
Let $g$ and $\fkp$ be as in \ref{newform-gal} and let $K$ be an imaginary quadratic field of discriminant $-D$. 
Suppose
\begin{itemize}
\item[\rm (a)] $(D,N)=1$;
\item[\rm (b)] $p$ splits in $K$ and $p\mid\mid N$;
\item[\rm (c)] Hypothesis $\heart$ holds for $(g,\fkp,K)$ with $N^-$ a product of an even number of primes;
\item[\rm (d)] Hypothesis $\club$ holds for $\ov\rho$;
\item[\rm (e)] $\ov\rho$ is not finite at $p$;
\item[\rm (f)] if $a(p) =1$, then Hypothesis $\fkL$ holds for $\ov\rho$.
\end{itemize}
Then the mod $p$ Kolyvagin system $\kappa_1 = \{c(n,1)\in H^1(K,V_0) \ : \ n\in \Lambda\}$ is non-zero.
In particular, the Kolyvagin system $\kappa^\infty$ is non-zero and, furthermore, $\sM_\infty(g)=0$.
\end{thm}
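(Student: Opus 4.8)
The plan is to reduce Theorem \ref{Kconj thm} to the previously established ingredients by following the inductive strategy of \cite[\S8-9]{Z13} essentially verbatim, since the present paper has been organized precisely so that every step of that argument has an analogue available under hypotheses (a)--(f). First I would set $\epsilon\in\{\pm1\}$ to be the sign of the functional equation of $g$ and recall Kolyvagin's structure theorem: the integers $\sM_r$ are non-increasing in $r$, so $\sM_\infty(g)=\lim_r \sM_r$, and $\kappa^\infty\neq\{0\}$ is equivalent to $\sM_\infty(g)<\infty$, while the sharper statement $\sM_\infty(g)=0$ says that already at the level of mod $p$ classes the system is non-trivial. The core dichotomy, exactly as in \cite{Z13}, is run on the parity of $\dim_{k_0}\Sel_{\fkp}(A_{0}/K)$ together with the sign $\epsilon$.

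Next I would carry out the induction on $r=\dim_{k_0}\Sel_\fkp(A_0/K)$ (equivalently on the corank of $\Sel_{\fkp^\infty}(A_0/K)$, since $A_0[\fkp_0]^{G_\BQ}=0$). The base case is the rank one case: when this Selmer dimension equals $1$, Theorem \ref{rank one thm} gives $c(1)=c(1,1)\neq0$, hence $\kappa_1\neq\{0\}$ and in fact $\sM(1)=0$, so $\sM_\infty(g)=0$. (The rank zero case, where $\Sel_\fkp(A_0/K)=0$, is handled by Theorem \ref{period Sel thm}/\ref{thm BSD K0} after passing to an $N^-$ with an odd number of prime factors via an admissible prime, exactly as the sign-flipping move in \cite{Z13}.) For the inductive step with $r\geq2$, suppose $\kappa_m\neq\{0\}$ is known for all auxiliary data with smaller Selmer dimension; one picks $n\in\Lambda$ with $c(n,1)\neq0$, then uses Lemma \ref{indep classes} (which replaces \cite[Lem.~8.1]{Z13} and is available because the image of $\ov\rho$ contains a nontrivial homothety by Hypothesis $\club$(2)) to find a Kolyvagin prime $\ell$ at which the relevant localizations are non-zero; the cohomological congruence Theorem \ref{coh cong thm} together with the rank-lowering Proposition \ref{rank lower} and Lemma \ref{rank lower lem} then propagates non-vanishing down to a smaller-rank situation and back, forcing $\sM_\infty(g)=0$. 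The global-to-local surjectivity inputs are furnished by Proposition \ref{Sel loc}, and the crucial local condition at $p$ (that $\loc_w c(n,m)\in\sL_w$) is Lemma \ref{lem c(n) p}, which is exactly where "not finite at $p$" is used.

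The main obstacle, and the reason most of the paper precedes this theorem, is that the congruence machinery underlying Theorem \ref{coh cong thm} and the base case Theorem \ref{rank one thm} both rely on the new versions of Ihara's lemma (Lemma \ref{Ihara with p}, Corollary \ref{Ihara gamma1}), the level-raising Lemma \ref{raise level lem}, the multiplicity one results (Lemmas \ref{mult one}, \ref{mult one def}), the $\fkL$-invariant control (Lemmas \ref{Linv lem}, and Hypothesis $\fkL$ propagating to congruent forms), and the Ribet--Takahashi/Pollack--Weston comparison Theorem \ref{thm RT}. Once these are in hand, though, the remaining work is genuinely formal: the combinatorial bookkeeping of Kolyvagin's derivative classes, the parity analysis, and the descent are insensitive to whether $p\mid N$, so \cite[\S8-9]{Z13} applies line by line. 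Thus I would explicitly state "With Theorems \ref{coh cong thm} and \ref{rank one thm}, Propositions \ref{Sel loc}, \ref{rank lower}, Lemmas \ref{rank lower lem}, \ref{indep classes}, and \ref{lem c(n) p} in place, the argument of \cite[\S8-9]{Z13} yields the theorem" and then indicate the one or two places (the homothety replacement for Hypothesis H.2, and the use of Lemma \ref{lem c(n) p} in verifying the Selmer-group computations at $p$) where a sentence of adaptation is required, leaving the rest to the reference.
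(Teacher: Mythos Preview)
Your proposal is correct and follows precisely the paper's approach: the paper's own proof of this theorem is nothing more than the sentence preceding \S11.1, namely that with Theorems \ref{coh cong thm} and \ref{rank one thm} established, the inductive argument of \cite[\S8--9]{Z13} carries over verbatim, with \cite[Lem.~8.1]{Z13} replaced by Lemma \ref{indep classes}. You have simply unpacked what that sentence means and correctly identified every ingredient (Propositions \ref{Sel loc}, \ref{rank lower}, Lemmas \ref{rank lower lem}, \ref{indep classes}, \ref{lem c(n) p}, and the supporting Ihara/level-raising/multiplicity-one machinery).

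One small slip in your sketch of the inductive step: you write ``one picks $n\in\Lambda$ with $c(n,1)\neq 0$,'' but of course that is what you are trying to prove. The actual flow is the reverse: rank-lowering (Proposition \ref{rank lower} and Lemma \ref{rank lower lem}) produces admissible primes $q_1,q_2$ so that $g_{q_1q_2}$ has strictly smaller Selmer dimension; by induction $\kappa_{q_1q_2}\neq\{0\}$, so some $c(n,q_1q_2)\neq 0$; then Theorem \ref{coh cong thm} and Lemma \ref{indep classes} are used to transport this non-vanishing back to $c(n',1)$ for a suitable $n'$. Since you explicitly defer the details to \cite[\S8--9]{Z13}, this inversion is harmless, but the sentence as written is circular.
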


\subsection{The parity conjecture for non-finite multiplicative reduction}
We also obtain a version of the parity theorem in cases of multiplicative reduction.
This is the analog of \cite[Thm.~9.2]{Z13}.

\begin{thm}\label{parity thm} Under the hypotheses of Theorem \ref{Kconj thm}, both
$\dim_k \Sel_\fkp(A_g/K)$ and the $\sO_{g,\fkp}$-corank of $\Sel_{\fkp^\infty}(A_g/K)$
are odd.
\end{thm}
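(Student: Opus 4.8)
The plan is to obtain Theorem~\ref{parity thm} from Theorem~\ref{Kconj thm} exactly as \cite[Thm.~9.2]{Z13} is obtained from \cite[Thm.~9.1]{Z13}, checking that the auxiliary facts used in that deduction survive the passage to $p\mid\mid N$. The only new input is Theorem~\ref{Kconj thm} itself, which gives $\sM_\infty(g)=0$ and hence an $n\in\Lambda$ with $c_1(n)\neq 0$; everything else is the formalism of self-dual Kolyvagin systems, already set up in \cite{Z13}.

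First I would record the self-duality. Since $\det\ov\rho=\chi$, the Weil pairing gives a perfect alternating $G_\BQ$-equivariant pairing $V_0\times V_0\to k_0(1)$, and the local conditions $\{\sL_{w,0}\}$ cutting out $\Sel_\fkp(A_g/K)$ are isotropic and mutually orthogonal complements under the resulting local Tate pairings. It is this self-orthogonality that, together with the non-vanishing of the Heegner Kolyvagin system and Kolyvagin's structure theory, forces a one-step imbalance between the two $\Gal(K/\BQ)$-eigenspaces of the Selmer group — the eigenspace containing the Heegner classes being the larger one — and hence the asserted oddness. To know that the $\sL_{w,0}$ are the correct self-dual conditions in the present setting one needs: the $k_0$-rationality of $\sL_{w,0}$ at every finite $w$ (Proposition~\ref{Sel loc}); the description at $w\mid p$, namely $\sL_w=\ker\{H^1(K_w,V_0)\to H^1(K_w,k_0(\ov\alpha))\}$ (Lemma~\ref{lem c(n) p}, valid because $\ov\rho$ is not finite at $p$) — this $\sL_w$ is its own annihilator, as the local pairing pairs $k_0(\chi\ov\alpha^{-1})$ against $k_0(\ov\alpha)$; and the vanishing of the Tamagawa contributions at the primes dividing $N$, i.e.\ $t(A_g/K_w)=0$ for $w\mid p$ (Lemma~\ref{tam p zero}) and $t(A_g/\BQ_\ell)=0$ for $\ell\mid\mid N^+$, $\ell\neq p$ (Lemma~\ref{tam coh lem} with part~(2) of Hypothesis~$\heart$, exactly as in the proof of Theorem~\ref{period Sel thm}).

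Next I would run Kolyvagin's finite--singular comparison. Choosing $n\in\Lambda$ with $\nu(n)$ minimal subject to $c_1(n)\neq 0$, and recalling that $c_1(n)$ is a $k^\times$-multiple of the image of $c(n,1)\in H^1(K,V_0)^{\epsilon_n}$ with $\epsilon_n=\epsilon\cdot(-1)^{\nu(n)}$, the class $c(n,1)$ lands in the Selmer group in which the condition at each $\ell\mid n$ is relaxed to allow ramification; its singular part at $\ell\mid n$ is governed by $c(n/\ell,1)$ through the reciprocity law underlying Theorem~\ref{coh cong thm}, while by minimality the lower-level classes contribute as predicted. Feeding this, together with the self-duality above, into the dimension count of \cite[\S\S8--9]{Z13} — Kolyvagin's argument as refined there and in the work of Howard and Nekov\'{a}\v{r}, using Lemma~\ref{indep classes} in place of \cite[Lem.~8.1]{Z13} — shows that $\dim_k\Sel_\fkp(A_g/K)^+-\dim_k\Sel_\fkp(A_g/K)^-$ is odd, hence $\dim_k\Sel_\fkp(A_g/K)$ is odd. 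Running the same argument with the $\fkp$-adic Tate module in place of $V_0$ gives the assertion for the $\sO_{g,\fkp}$-corank of $\Sel_{\fkp^\infty}(A_g/K)$; alternatively one passes between the two via $\Sel_\fkp(A_g/K)\cong\Sel_{\fkp^\infty}(A_g/K)[\fkp]$, using $A_g(K)[\fkp]=0$, which holds because $V_0$ is irreducible as a $G_K$-module — any $G_K$-stable line would be fixed by the nontrivial unipotent element in $\ov\rho(G_K)$ (the square of the one furnished by Hypothesis~$\club$, or the image of $I_\ell$ for a ramified prime $\ell\mid\mid N$, $\ell\neq p$, as provided by Hypothesis~$\heart$) and hence would be $\Gal(K/\BQ)$-stable and so $G_\BQ$-stable, contradicting irreducibility of $\ov\rho$.

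The one point that genuinely has to be checked rather than merely transcribed from \cite{Z13} is the behaviour at the primes above $p$: one must be sure that replacing the good-ordinary local condition there by the not-finite-at-$p$ multiplicative one does not disturb the parity bookkeeping. This is precisely what Lemmas~\ref{lem c(n) p} and~\ref{tam p zero} and Corollary~\ref{lem Lp cor} provide; with them in hand the argument of \cite[\S9]{Z13} carries over without further change.
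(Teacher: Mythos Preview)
Your proposal is correct and follows exactly the approach the paper indicates: the paper gives no separate proof of Theorem~\ref{parity thm}, simply noting that once Theorems~\ref{coh cong thm} and~\ref{rank one thm} (and hence Theorem~\ref{Kconj thm}) are established, the proof of \cite[Thm.~9.2]{Z13} carries over with Lemma~\ref{indep classes} in place of \cite[Lem.~8.1]{Z13}. One small slip: the description of $\sL_w$ at $w\mid p$ is Lemma~\ref{lem Lp}, not Lemma~\ref{lem c(n) p}.
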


\section{Theorems for elliptic curves with non-finite multiplicative reduction}
We now explain how Theorems \ref{thm E main}, \ref{thm E BSD}, and \ref{thm E/Q} follow.

\subsection{Proof of Theorem \ref{thm E main}}  Let $E$, $N$, $p$, and $K$ be as in Theorem \ref{thm E main}. Let $g\in S_2(\Gamma_0(N))$
be the newform associated to $E$ (so $L(E,s) = L(g,s)$). 
Then, in the notation of \ref{newform-gal}, $\sO_0=\sO=\BZ$, $\fkp = (p)$, $V_0=V=E[p]$, and $\ov\rho=\ov\rho_{E,p}$. 
Also, since $\bar\rho_{E,p}$ is irreducible, without loss of generality we may assume that $E$ is an optimal curve, so $A_0=A=E$.
To prove Theorem \ref{thm E main} it then suffices to show that the conditions (a)--(f) of Theorem \ref{Kconj thm} hold for these. 

Conditions (a), (b), and (e) of Theorem 
\ref{Kconj thm} are immediate from the hypotheses of Theorem \ref{thm E main}. 

To see that Hypothesis $\heart$ holds for $(g,p,K)$, in light of the assumption
that Hypothesis~$\spade$ holds for $(E,p,K)$, we need only check that part (4) of $\heart$ holds.  This is explained in \cite[Lem.~5.1(2)]{Z13}.
Thus part (4) of $\heart$ holds (with $N^-$ a product of an even number of primes), and so condition (c) of Theorem \ref{Kconj thm} holds.

To see that Hypothesis $\club$ holds for $\ov\rho$, we first note that $\ov\rho$ is irreducible by hypothesis. So part (1) of $\club$ holds. 
Since $\Ram(\ov\rho)\neq 0$ by
hypothesis (see part (3) of $\spade$), there is some prime $\ell\mid\mid N$, $\ell\neq p$, such that $\ov\rho$ is ramified at $\ell$. Since $\ell\mid\mid N$,
$E$ has multiplicative reduction at $\ell$ and so the action of $I_\ell$ on $\Ta_pE\cong\BZ_p^2$ is through a unipotent subgroup of
$\GL_2(\BZ_p)$. It follows that the image of $\ov\rho(I_\ell)$, which is non-zero, is also unipotent. In particular, $\ov\rho(G_\BQ)$ contains an element of 
order $p$. It then follows from \cite[Prop.~15]{Serre-Gal} that $\ov\rho(G_\BQ)$ contains $\SL_2(\BF_p)$ and hence equals $\GL_2(\BF_p)$. 
Since $p\geq 5$, it follows that part (2) of $\club$ also holds. Thus condition (d) of Theorem \ref{Kconj thm} holds.

Finally, we recall that by Lemma \ref{Linv lem EC}, hypothesis (c) of Theorem \ref{thm E main} implies that Hypothesis~$\fkL$ holds for $\ov\rho$
if $a(p)=1$. 
So condition (f) of Theorem \ref{Kconj thm} also holds. 

This completes the verification that the hypotheses of Theorem \ref{thm E main} imply that all the conditions of Theorem \ref{Kconj thm} hold for $g$, $p$ and $K$, and
so Theorem \ref{thm E main} follows.

\subsection{A theorem about $\ord(\kappa^\infty)$}
Recall that 
$$
\kappa^\infty = \{c_M(n) \in H^1(K,E[p^M]) \ : \ n\in \Lambda, M\leq M(n)\},
$$
the Kolyvagin system associated with $E$, $p$, and $K$ as in \ref{Koly Conj}. Let $\ord(\kappa^\infty)$ be the minimum of the number of prime factors of all integers $n\in \Lambda$
such that $c_M(n)\neq 0$ for some $M\leq M(n)$. Combining Theorem \ref{thm E main} with \cite[Thm.~4]{Koly-Selmer} we obtain an analog of \cite[Thm.~1.2]{Z13}:

\begin{thm}\label{thm E rank} 
Let $E$, $p$, and $K$ satisfy the hypotheses of Theorem \ref{thm E main}. Let $r_p^\pm$ be the $\BZp$-corank of
$\Sel_{p^\infty}(E/K)^\pm$, where the superscript `$\pm$' denotes the subgroup on which $\Gal(K/\BQ)$ acts as $\pm 1$. Then 
$$
\ord(\kappa^\infty) = \min\{ r_p^+,r_p^-\} -1.
$$
\end{thm}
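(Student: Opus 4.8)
The plan is to combine Theorem~\ref{thm E main}, which establishes the non-vanishing of the Kolyvagin system $\kappa^\infty$ (equivalently $\sM_\infty(g)=0$), with Kolyvagin's structure theorem \cite[Thm.~4]{Koly-Selmer}, which computes $\ord(\kappa^\infty)$ in terms of the $\BZ_p$-coranks $r_p^\pm$ of the $\pm$-eigenspaces of $\Sel_{p^\infty}(E/K)$ under $\Gal(K/\BQ)$. This is exactly how \cite[Thm.~1.2]{Z13} is obtained from \cite[Thm.~9.1]{Z13} in the good ordinary case, so the task reduces to checking that both inputs are available when $p\mid\mid N$ and $E[p]$ is not finite at $p$. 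Recall that here $\sO_0=\sO=\BZ$, $\fkp=(p)$, $A_0=A=E$, $V_0=E[p]$, and $\ov\rho=\ov\rho_{E,p}$, and that $c_1(n)$ is, up to $\BF_p^\times$-multiple, the image of $c(n,1)$.

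First I would record that the hypotheses of Theorem~\ref{thm E main} already supply what \cite[Thm.~4]{Koly-Selmer} requires. In the proof of Theorem~\ref{thm E main} it is shown that $\ov\rho$ is surjective onto $\GL_2(\BF_p)$ (using that $p\mid\mid N$ forces $\ov\rho$ to be ramified at some $\ell\neq p$, hence to have image containing an element of order $p$, and then \cite[Prop.~15]{Serre-Gal}); in particular $\ov\rho$ is absolutely irreducible with image containing a nontrivial homothety, which is the large-image input needed for Kolyvagin's argument. Next I would verify that the classes $c_M(n)$ genuinely form a Kolyvagin system for the $p^\infty$-Selmer structure attached to $E/K$: they are unramified away from $Np$ by construction, and at a prime $w\mid p$ of $K$ one has $\loc_w c_M(n)\in\sL_w$ by Lemma~\ref{lem c(n) p} (and its evident $p^M$-analogue, valid because $\Phi[\fkp]=0$ gives $\Phi[p^\infty]=0$), while Lemma~\ref{lem Lp} identifies $\sL_w$ with the local Kummer condition $E(K_w)\otimes\BQ_p/\BZ_p\hookrightarrow H^1(K_w,E[p^\infty])$ cutting out $\Sel_{p^\infty}(E/K)$ at $w$ --- here crucially using hypothesis (b) of Theorem~\ref{thm E main}, namely that $E[p]$ is not finite at $p$ (equivalently $p\nmid\ord_p(\Delta)$). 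With these two facts in hand, Theorem~\ref{thm E main} gives $\kappa^\infty\neq\{0\}$, hence $\ord(\kappa^\infty)<\infty$, and \cite[Thm.~4]{Koly-Selmer} then yields $\ord(\kappa^\infty)=\min\{r_p^+,r_p^-\}-1$ (together with, as in \cite{Z13}, the finiteness of $\Sha(E/K)[p^\infty]$).

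I do not expect a genuine obstacle in this deduction: all the arithmetic difficulty is absorbed into Theorem~\ref{thm E main}, and \cite[Thm.~4]{Koly-Selmer} is a statement purely in Galois cohomology about abstract Kolyvagin systems, insensitive to the reduction type of $E$ at $p$. The one place where the multiplicative hypothesis $p\mid\mid N$ (as opposed to $p\nmid N$) intervenes here is in ensuring that the Selmer structure carved out by the Kolyvagin classes at the primes above $p$ is the arithmetic one attached to $E/K$, and this is precisely the content of Lemmas~\ref{lem Lp} and~\ref{lem c(n) p}, both resting on $E[p]$ being non-finite at $p$. Once this compatibility is recorded, the argument of \cite{Z13} deducing \cite[Thm.~1.2]{Z13} from \cite[Thm.~9.1]{Z13} applies verbatim.
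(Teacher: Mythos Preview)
Your proposal is correct and follows essentially the same route as the paper: the paper simply states that the theorem is obtained by combining Theorem~\ref{thm E main} with \cite[Thm.~4]{Koly-Selmer}, exactly as \cite[Thm.~1.2]{Z13} is deduced from \cite[Thm.~9.1]{Z13}. Your additional verification that the Kolyvagin classes satisfy the Selmer local condition at primes above $p$ (via Lemmas~\ref{lem Lp} and~\ref{lem c(n) p}) and that $\ov\rho$ has large image is more explicit than what the paper records here, but it is consistent with and fills in the details of the one-line deduction the paper gives.
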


\subsection{Proof of Theorem \ref{thm E BSD}}
Theorem \ref{thm E BSD} follows from Theorem \ref{thm E main} (and the slightly stronger statement from Theorem \ref{Kconj thm} that $M_\infty(g)=0$), 
by the same arguments used to deduce \cite[Thm.~10.2]{Z13} and \cite[Thm.~10.3]{Z13} 
(making use of Theorem \ref{thm BSD 0} and the period comparison in Lemma \ref{can period twist}).

\subsection{Proof of Theorem \ref{thm E/Q}}
Theorem \ref{thm E/Q} is deduced from the Theorem \ref{thm E rank} just as \cite[Thm.~1.4]{Z13} is deduced from \cite[Thm.~1.2]{Z13}: by making a good choice of $K$.

\end{document}